\documentclass[10pt,a4paper]{amsart}

\usepackage{graphicx,amssymb,color,amscd}
\usepackage[all]{xy}
\SelectTips{cm}{}

\newtheorem{thm}{Theorem}

\newtheorem{cor}{Corollary}

\theoremstyle{plain}
\newtheorem{theorem}{Theorem}[section]
\newtheorem*{theorem*}{Theorem}
\newtheorem{lemma}[theorem]{Lemma}
\newtheorem{proposition}[theorem]{Proposition}
\newtheorem{corollary}[theorem]{Corollary}
\newtheorem*{corollary*}{Corollary}
\theoremstyle{definition}

\theoremstyle{remark}
 
\newtheorem{remark}[theorem]{Remark}
\newtheorem{example}[theorem]{Example}

\numberwithin{equation}{section}
\numberwithin{figure}{section}

\arraycolsep 1pt

\def\tr{\mathop{\mathrm{tr}}\nolimits}

\newcommand{\bd}{\begin{description}}   
\newcommand{\ed}{\end{description}} 
\newcommand{\ba}{\begin{array}}      \newcommand{\ea}{\end{array}} 
\newcommand{\bc}{\begin{center}}     \newcommand{\ec}{\end{center}} 
\newcommand{\be}{\begin{enumerate}}  \newcommand{\ee}{\end{enumerate}} 
\newcommand{\beq}{\begin{eqnarray}}  \newcommand{\eeq}{\end{eqnarray}} 
\newcommand{\beQ}{\begin{eqnarray*}} \newcommand{\eeQ}{\end{eqnarray*}} 
\newcommand{\F}{\textrm{F}_l}

\def\co{\colon\thinspace}
\def\id{\mathop{\mathrm{id}}\nolimits}
\def\coef{\mathrm{coeff}_{\hbar}}
\newcommand{\oh}[1]{\pmod{\hbar^{#1}}}

\begin{document} 
\title[Universal $sl_2$ invariant and Milnor invariants]{The universal $sl_2$ invariant and Milnor invariants} 

\author[J.B. Meilhan]{Jean-Baptiste Meilhan} 
\address{Univ. Grenoble Alpes, Institut Fourier, F-38000 Grenoble, France}
         \email{jean-baptiste.meilhan@ujf-grenoble.fr}
\author[S. Suzuki]{Sakie Suzuki} 
\address{The Hakubi Center for Advanced Research/Research Institute for Mathematical Sciences, Kyoto University, Kyoto, 606-8502, Japan. }
         \email{sakie@kurims.kyoto-u.ac.jp}

\begin{abstract}
The universal $sl_2$ invariant of string links has a universality property for the colored Jones polynomial of links, 
and takes values in the $\hbar$-adic completed tensor powers of the quantized enveloping algebra of $sl_2$.
In this paper, we exhibit explicit relationships between the universal $sl_2$ invariant and Milnor invariants, 
which are classical invariants generalizing the linking number, providing some new topological insight into quantum invariants.
More precisely, we define a reduction of the universal $sl_2$ invariant, and show how it is captured by Milnor concordance invariants.
We also show how a stronger reduction corresponds to Milnor link-homotopy invariants.
As a byproduct, we give explicit criterions for invariance under concordance and link-homotopy of the universal $sl_2$ invariant, and in particular for sliceness.
Our results also provide partial constructions for the still-unknown weight system of the universal $sl_2$ invariant. 
\end{abstract}

\keywords{quantum and finite type invariants, weight system, link concordance, link-homotopy.}


\maketitle

\section{Introduction}
The theory of quantum invariants of knots and links emerged in the middle of the eighties, after the fundamental work of V. F. R. Jones. 
Instead of the classical tools of topology, such as algebraic topology, used until then, this new class of invariants was derived from interactions of knot theory with other fields of mathematics, such as operator algebras and representation of quantum groups, and revealed close relationships with theoretical physics. 
Although this gave rise to a whole new class of powerful tools in knot theory, we still lack a proper understanding of the topological information carried by quantum invariants. 
One way to attack this fundamental question is to exhibit explicit relationships with classical link invariants. 
The purpose of this paper is to give such a relation, by showing how a certain reduction of the universal $sl_2$ invariant is captured by Milnor invariants. 

\textit{Milnor invariants} were originally defined by J. Milnor for links in $S^3$ \cite{Milnor,Milnor2}. 
Their definition contains an intricate indeterminacy, which was shown by N. Habegger and X. S. Lin 
to be equivalent to the indeterminacy in representing a link as the closure of a \textit{string link}, i.e. of a pure tangle without closed components \cite{HL1}.
Milnor invariants are actually well-defined integer-valued invariants of framed string links,  
and the first non-vanishing Milnor string link invariants can be assembled into a single \textit{Milnor map} $\mu_k$. 
See Section \ref{2} for a review of Milnor string link invariants. 

Milnor invariants constitute an important family of classical (string) link invariants, 
and as such, their connection with quantum invariants has already been the subject of several works. 
The first attempt seems to be due to L. Rozansky, who conjectured a formula relating Milnor invariants to the Jones polynomial \cite{rozansky}. 
An important step was taken by Habegger and G. Masbaum, who showed explicitly in \cite{HMa} how Milnor invariants are related to the Kontsevich integral. 
More recently, A. Yasuhara and the first author gave explicit formulas relating Milnor invariants to the HOMFLYPT polynomial \cite{MY}. 

The \textit{universal $sl_2$ invariant} $J(L)$ for an  $l$-component framed string link $L$ takes values in the $l$-fold completed tensor power $U_{\hbar}(sl_2)^{\hat \otimes  l}$ of the quantized enveloping algebra $U_{\hbar}(sl_2)$ of $sl_{2}$, and has the universal property for  the colored Jones polynomial \cite{Law1,Law2,O,RT}. 
See Section \ref{3} for the definitions of $U_{\hbar}(sl_2)$ and the universal $sl_2$ invariant.
The second author studied in \cite{sakie1,sakie2,sakie3} the universal $sl_2$ invariant of several classes of string links
satisfying vanishing properties for Milnor invariants.\footnote{More precisely, these results are for \textit{bottom tangles}; but we can identify bottom tangles with string links via a fixed one-to-one correspondence, see \cite{H1}. } 
In this paper, we further explore the relation with the universal $sl_2$ invariant and Milnor invariants.

Before we proceed with the description of our results, 
let us recall the relationship between the Kontsevich integral, the universal $sl_{2}$ invariant and the colored Jones polynomial. 
The Kontsevich integral $Z(L)$ for an $l$-component string links $L$ takes values in the completed space $\mathcal{A}(l)$ of Jacobi diagrams on the disjoint union of $l$ intervals.
For the closure link $\text{cl}(L)$ of $L$, the colored Jones polynomial $J_{V_1,\ldots,V_l}(\text{cl}(L))$, 
with a finite dimensional representation $V_i$ of $U_{\hbar}(sl_{2})$ attached to the $i$th component, 
takes values in $ \mathbb{Z}[q^{1/4},q^{-1/4}] \subset \mathbb{Q}[[\hbar]]$,  where $q=\exp \hbar$.
The Kontsevich integral has the universal property for finite type invariants, thus for quantum invariants. 
This implies that there exists a algebra homomorphism $W^{U}\co \mathrm{Im}(Z)  \to  U_{\hbar}(sl_2)^{\hat{\otimes} l}$, the so called \textit{weight system} for the universal $sl_{2}$ invariant, such that  $W^{U}\circ Z=J$. 
(This is well-known,  and 
follows from the fact that the coefficients of the universal invariant have a finite type property, in a strictly similar way as in \cite[Cor. 7.5]{ohtsuki}.)
However, no explicit formula for $W^{U}$ is known yet.
There is, however, a graded algebra homomorphism called  \textit{the universal $sl_2$ weight system}:
$$ W\co  {\mathcal{A}}(l) \to U(sl_2)^{\otimes l}[[\hbar]], $$ 
where $U(sl_2)$ is the universal enveloping algebra of $sl_2$. 
As a summary, we have the following commutative diagram:
\[\xymatrix@C=4mm{
 & \{l\text{-comp. string links}  \} \ar @{->}[dl]_{Z} \ar @{->}[d]^{J} \ar @{->}[r]^{\text{  cl} }& \{l\text{-component links} \}\ar @{->}[d]^{J_{V_1,\ldots,V_l }}   \\
{\mathcal{A}(l)  \ar @{->}[dr]_{W} \supset \mathrm{Im}(Z)}  \ar @{-->}[r]^{W^{U}} & U_{\hbar}(sl_2)^{\hat{\otimes} l}  \ar @{->}[r]^{\tr_q^{V_1,\ldots,V_l}} & \mathbb{Q}[[\hbar]]   \\
 & U(sl_2)^{\otimes l}[[\hbar]]\ar @{->}[ur]_{\tr_{\nu}^{V_1,\ldots,V_l}}  &  }\]
\noindent 
where $\tr_{q}^{V_1,\ldots,V_l}$ and $\tr_{\nu}^{V_1,\ldots,V_l}$ are variants of the quantum trace map. 
See \cite[Sec. 5]{H2} for the commutativity of the upper right square and  
\cite[Sec. 10]{LM} for  the boundary pentagon.
Note that the composition $\tr_{\nu}^{V_1,\ldots,V_l}\circ W=\tr_{q}^{V_1,\ldots,V_l}\circ W^{U}$ is the weight system for the colored Jones polynomial.

Note that the algebras $U_{\hbar}(sl_2)^{\hat\otimes l}$  and $U(sl_2)^{\otimes l}[[\hbar]]$ are isomorphic theoretically, but again, no explicit 
 isomorphism is known, see \cite{Kassel}. In this paper, we will fix a $\mathbb{Q}[[\hbar]]$-linear isomorphism
\begin{align}\label{qm}
\rho: U_{\hbar}(sl_2)^{\hat\otimes l} &\to U(sl_2)^{\otimes l}[[\hbar]], 
\end{align} 
with respect to the PBW basis  (see Section \ref{5.1}), so that we can compare the two \emph{different} $\mathbb{Q}$-linear maps $\rho \circ W^{U}$ and $W$.

Now, as mentioned above, Habegger and Masbaum showed in \cite{HMa} that, for an $l$-component string link $L$ with vanishing Milnor invariants of length $\le m$, we have
  \begin{align} \label{eq:HM}
  Z^t(L) = 1 + \mu_{m}(L) + (\textrm{terms of degree $\ge m+1$}) \in  \mathcal{A}^t(l), 
  \end{align}
where $Z^t$ is the projection of the Kontsevich integral onto the so--called ``tree part'' $\mathcal{A}^t(l)$ of  $\mathcal{A}(l)$,\footnote{The ``tree part'' is well-defined in the space $\mathcal{B}(l)$ of labeled Jacobi diagrams, which is isomorphic to $\mathcal{A}(l)$  as  a  $\mathbb{Q}$-module via an analogue of the PBW isomorphism from $S(sl_{2})$ to $U(sl_{2})$.   In this paper we will use $\mathcal{B}(l)$ rather than $\mathcal{A}(l)$, see Section \ref{sec:HM} for details. }  and where $\mu_{m}(L)$ is the Milnor map of $L$ regarded as an element of $\mathcal{A}^t(l)$. 
If we knew the weight system $W^{U}$ explicitly, we could easily deduce a relation between Milnor invariants and the universal $sl_{2}$ invariant by transfering Habegger-Masbaum's result (\ref{eq:HM}) via $W^{U}$; but, again, this is not the case. 
Actually, our first main result, Theorem \ref{sth2} below, implies that, 
when restricting to the image of Milnor map, we can identify $W$ and $W^{U}$ via the $\mathbb{Q}$-linear isomorphism $\rho$.  
In other words, we give a partial construction of $W^{U}$.

Let us now state the first main result explicitly. 
The Milnor map $\mu_{m}$ actually takes values in the space of tree Jacobi diagrams, i.e. connected and simply connected Jacobi diagrams. 
The restriction of $W$ to the space of tree Jacobi diagrams of degree $m$ takes values in $(U(sl_2)^{\otimes l})_{m+1} \hbar^m$, 
where $(U(sl_2)^{\otimes l})_{m+1}$ is the subspace of $U(sl_2)^{\otimes l}$ of homogeneous elements of degree $m+1$ with respect to the length of the words in $sl_2$ in the PBW basis, see Lemma \ref{wcc}.  
Thus,  if we  consider the projection
\begin{align*}
 \pi^t\co U(sl_2)^{\otimes l}[[\hbar]] &\to   \prod_{m\geq 1}(U(sl_2)^{\otimes l})_{m+1} \hbar^{m}, 
 \end{align*}
of $\mathbb{Q}$-modules  (see Section \ref{subseq}), then we can compare the maps $W \circ \mu_{m}$ and $J^t:= \pi^t\circ \rho \circ J$. We obtain the following result. 

\begin{thm}[Theorem \ref{sth2}]
Let $m\geq 1$.   
If $L$ is a string link with vanishing Milnor invariants of length $\le m$, then we have
$$
J^t(L)\equiv(W \circ \mu_{m}) (L) \oh{m+1}.
$$
\end{thm}
\noindent 
Here, and throughout the paper, we simply set $(\mathrm{mod} \ \hbar^{k})=( \mathrm{mod} \ \hbar^{k}U_{\hbar}^{\hat\otimes l})$ for $k\geq 1$ and  an appropriate $l\geq 1$. 

Theorem \ref{sth2} implies  a concordance-invariance property of $J^t$ as follows.
\begin{cor}[Corollary \ref{cor:main}]
Let $L, L' $  be two concordant string links with vanishing Milnor invariants of length $\le m$.
Then we have
 \begin{align*}
  J^t(L') \equiv J^t(L) \oh{m+1}. 
  \end{align*}
In particular, if $L$ is concordant to the trivial string link, then $J^t(L)$ is trivial.  
\end{cor}

There is also a variant of the Theorem \ref{sth2}, using another projection map $\tilde \pi^t$ onto a larger quotient of $U_{\hbar}^{\hat \otimes l}$; see Remark \ref{rem:tbw}. This provides another criterion for the universal $sl_2$ invariant, which applies in particular to slice, boundary or ribbon string links as follows.
\begin{thm}[Corollary \ref{cor:final}]
Let $L$ be an $l$-component string link with vanishing Milnor invariants. Then we have
 $$\rho(J(L))\in 1+ \prod_{ 1\leq i\leq j}(U(sl_2)^{\otimes l})_i\hbar^j. $$
\end{thm}
\noindent
This result strongly supports  Conjecture 1.5 in \cite{sakie2}, where the second author suggests that the universal $sl_2$ invariant of a bottom tangle with vanishing Milnor
invariants is contained in a certain subalgebra of $U_{\hbar}(sl_2)^{\hat \otimes l}$.

As emphasized above, Theorem \ref{sth2} is not a mere consequence of Habegger-Masbaum's work, 
and the proof will be given by comparing directly the definitions of the Milnor map and the universal $sl_{2}$ invariant. 
One of the main ingredients for the proof is a version for Milnor link-homotopy invariants. 
Recall that link-homotopy is the equivalence relation generated by self-crossing changes. 
Habegger and Lin showed that Milnor invariants indexed by sequences with no repetition form a complete set of link-homotopy invariants for string links \cite{HL1}. We can thus consider the link-homotopy reduction $\mu^h_{m}$ of the Milnor map $\mu_{m}$, see Section \ref{subseq}. 
On the other hand, we consider the projection of $\mathbb{Q}$-modules 
  $$  \pi^h\co U(sl_2)^{\otimes l}[[\hbar]]  \to \bigoplus_{m=1}^{l-1}\langle sl_2 \rangle _{m+1}^{(l)}\hbar^m, $$
where $\langle sl_2 \rangle_{m+1}^{(l)}\subset (U(sl_2)^{\otimes l})_{m+1}$ denotes the subspace spanned by tensor products such that each tensorand is of degree $\leq 1$, that is, roughly speaking, tensor products of $1$'s and elements of $sl_2$. 

It turns out that the restriction of the $sl_2$ weight system $W$ to the space of tree Jacobi diagrams with non-repeated labels takes values in this space $\bigoplus_{m=1}^{l-1}\langle sl_2 \rangle _{m+1}^{(l)}\hbar^m$. 
Thus, similarly as before, we can compare $W \circ \mu^h_{m}$ and $J^h:= \pi^h\circ \rho  \circ J$, and obtain the following second main result. 
\begin{thm}[Theorem \ref{sth2h}]
Let $m\geq 1$.   
If $L$ is  a string link with vanishing Milnor link-homotopy invariants of length $\le m$, then we have
$$
 J^h(L)\equiv (W \circ \mu^h_{m})(L) \oh{m+1}.  
$$
\end{thm}

Note that Theorem \ref{sth2h} cannot in general be simply deduced from Theorem  \ref{sth2} by a mere link-homotopy reduction process. 
(This is simply because a string link may in general have nonzero Milnor invariants of length $m$, yet vanishing Milnor link-homotopy invariants of length $m$.)
In order to prove Theorem \ref{sth2h}, one of the key results is  Proposition \ref{s241}, a link-homotopy invariance property for the map $J^h$. This reduces the proof to an explicit computation for a link-homotopy representative, given in terms of the lower central series of the pure braid group.  
In the process of proving Proposition \ref{s241}, we obtain, as in the case of Theorem \ref{sth2} and Remark \ref{rem:tbw}, a variant of Theorem \ref{sth2h} using another projection map, giving an algebraic criterion detecting link-homotopically trivial string links; see Remark \ref{rem:tbw2} and Corollary \ref{cor:final2}. 

It is worth mentioning here that the $sl_2$ weight system $W$ is not injective, and thus we do not expect that the universal $sl_2$ invariant detects Milnor invariants.   
This follows from the fact that $W$ takes values in the invariant part of $S(sl_2)^{\otimes l}[[\hbar]]$ and a simple argument comparing the dimensions of the domain and images. 
We will further study properties of the universal $sl_2$ weight system in a forthcoming paper \cite{MS}. 

The rest of the paper is organized as follows. 
In section 2, we review in detail the definition of Milnor numbers and of the Milnor maps $\mu_k$, and recall some of their properties. 
In section 3, we recall the definitions of the quantized enveloping algebra $U_{\hbar}(sl_2)$ and the universal $sl_2$ invariant, 
and recall how the framing and linking numbers are simply contained in the latter. 
Section 4 provides the diagrammatic settings for our paper; we review the definition of Jacobi diagrams, and their close relationships with the material from the previous sections. 
This allows us to give the precise statements of our main results in Section 5. 
Sections 6, 7 and 8 are dedicated to the proofs. Specifically, the link-homotopy version of our main result is shown in Section 6, while Section 7 contains the proof of the general case. 
Some of the key ingredients of these proofs require the theory of claspers, which we postponed to Section 8. 

\section{Milnor invariants}\label{2}

Throughout the paper, let $l\ge 1$ be some fixed integer.
 
Let $D^2$ denote the standard $2$-disk equipped with $l$ marked points  $p_1$, . . . , $p_l$ in its interior as shown in Figure \ref{fig:basis}. 
Fix also a point $e$ on the boundary of the disk $D^2$, 
and for each $i=1, . . ., l$, pick an oriented loop $\alpha_i$ in $D^2$ based at this $e$ and winding around $p_i$ in the trigonometric direction.  See Figure \ref{fig:basis}. 
\begin{figure}[!h]
 \centering
 \includegraphics[scale=0.6]{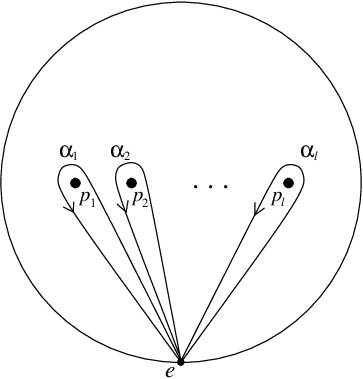}
 \caption{The disk $D^2$ with $l$ marked points $p_i$, and the arcs $\alpha_i$; $i=1, \cdots, l$. }\label{fig:basis}
\end{figure}

An $l$-component string link is a proper embedding of $l$ disjoint copies of the unit interval $[0,1]$ in $D^2\times [0,1]$, 
such that for each $i$, the image $L_i$ of the $i$th copy of $[0,1]$ runs from $(p_i,1)$ to $(p_i,0)$. The arc $L_i$ is called the $i$th component of $L$.  
An $l$-component string link is equipped with the \emph{downwards} orientation induced by the natural orientation of $[0,1]$.

In this paper, by a string link we will implicitly mean a \emph{framed} string link, that is, equipped with a trivialization of its normal tangent bundle. 
(Here, it is required that this trivialization agrees with the positive real direction at the boundary points.)
In the various figures of this paper, we make use of the blackboard framing convention.  

The ($0$-framed) $l$-component string link $\{p_1,...,p_l\}\times[0,1]$ in $D^2\times [0,1]$ is called 
the {\em trivial $l$-component string link} and is denoted by $\mathbf{1}_l$, 
or sometimes simply  $\mathbf{1}$ when the number of components is implicit.

Let $SL(l)$ denote the set of isotopy classes of $l$-component string links fixing the endpoints. 
The stacking product endows $SL(l)$ with a structure of monoid, with the trivial $l$-component string link $\mathbf{1}_l$ as unit element.   
In this paper, we use the notation $\cdot$ for the stacking product, with the convention that the rightmost factor is \emph{above}.
Note that the group of units of $SL(l)$ is precisely the pure braid group on $l$ strands $P(l)$ \cite{HL2}. 
\subsection{Artin representation and the Milnor map $\mu_k$ for string links} \label{sec:milnormap}
In this subsection we review Milnor invariants for string links, following \cite{HL1,HL2}. 

For an $l$-component string link $L=L_1\cup\cdots\cup L_l$ in $D^2\times [0,1]$, denote by 
$Y=(D^2\times [0,1])\setminus N(L)$ the exterior of an open tubular neighborhood $N(L)$ of $L$, 
and set $Y_0=(D^2\times \{0\})\setminus N(L)$ and $Y_1=(D^2\times \{1\})\setminus N(L)$. 
For $i=0,1$, the fundamental group of $Y_i$ based at $(e,i)$ identifies with the free group $\F$ on generators $\alpha_1,...,\alpha_l$. 

Recall that the lower central series of a group $G$ is defined inductively by $\Gamma_1G=G$ and $\Gamma_{k+1}G=[G,\Gamma_k G]$.  
By a theorem of J. Stallings \cite{stallings}, the inclusions 
$\iota_i:Y_i\longrightarrow Y$ induce isomorphisms 
$(\iota_i)_k:\pi_1(Y_t)/\Gamma_{k+1}\pi_1(Y_t) \longrightarrow \pi_1(Y)/\Gamma_{k+1}\pi_1(Y)$ for 
any positive integer $k$. 
Hence for each $k$, the string link $L$ induces an automorphism $(\iota_0)_k^{-1}\circ(\iota_1)_k$ of $\F/\Gamma_{k+1}\F$. 
Actually, this assignment defines a monoid homomorphism
$$ A_k: SL(l)\rightarrow \textrm{Aut}_0\left( \F/\Gamma_{k+1}\F \right), $$
called the \emph{$k$th Artin representation}, where $\textrm{Aut}_0\left( \F/\Gamma_{k+1}\F \right)$ denotes the group of automorphisms of $\F/\Gamma_{k+1}\F$ sending each generator $\alpha_j$ to a conjugate of itself and preserving the product $\prod_j \alpha_j$.  
More precisely, for each component $j$, consider the \emph{preferred $j$th longitude} of $L$, 
which is a $f_j$-framed parallel copy of $L_j$, 
where $f_j$ denotes the framing of component $j$. 
This defines an element $l_j$ in $\pi_1(Y)/\Gamma_{k+1}\pi_1(Y)$, and 
for any positive integer $k$, we set $l^k_j:=(\iota_0)_k^{-1}(l_j)\in \F/\Gamma_{k+1}\F$. 
Then we have that 
$A_k(L)$ maps each generator $\alpha_j$ to its conjugate 
$$ A_k(L):\alpha_j\mapsto l_j^k \alpha_j (l_j^k)^{-1}.$$
\noindent (Here, we denoted the image of $\alpha_j$ in the lower central series quotient $\F/\Gamma_{k+1}\F$ again by $\alpha_j$.) 

Denote by $SL_k(l)$ the set of $l$-component string links whose longitudes are all trivial in $\F/\Gamma_{k}\F$. 
We have a descending filtration of monoids
 $$ SL(l)=SL_1(l)\supset SL_2(l)\supset . . . \supset SL_k(l)\supset . . .$$
called the \emph{Milnor filtration},
and we can consider the map 
  $$ \mu_{k}: SL_k(l) \rightarrow \dfrac{\F}{\Gamma_2 \F}\otimes \dfrac{\Gamma_k \F}{\Gamma_{k+1} \F} $$
for each $k\ge 1$, which maps $L$ to the sum 
  $$ \mu_{k}(L) := \sum_{i=j}^l \alpha_j\otimes l^{k}_j, $$
called the \emph{degree $k$ Milnor map}. 

\subsection{Milnor numbers for string links} \label{milnornb}

As mentioned in the introduction, 
Milnor invariants were originally defined as numerical invariants.  Let us briefly review their definition and connection to the Milnor map. 

Let $\mathbb{Z}\langle \langle X_1, . . . ,X_l\rangle \rangle$ denote the ring of formal power series in the non-commutative variables $X_1,...,X_l$. 
The \emph{Magnus expansion} $E: \F\rightarrow \mathbb{Z}\langle \langle X_1, . . . ,X_l\rangle \rangle$ is 
the injective group homomorphism which maps each generator $\alpha_j$ of $\F$ to $1+X_j$ 
(and thus maps each $\alpha_j^{-1}$ to $1-X_j+X_j^2-X_j^3+\cdots$). 

Since the Magnus expansion $E$ maps $\Gamma_k\F$ to terms of degree $>k$, 
the coefficient $\mu_{i_1i_2...i_{m}j}(L)$ of $X_{i_1}\cdots X_{i_{m}}$ in the Magnus expansion $E(l^k_j)$ is 
a well-defined invariant of $L$ for any $m\leq k$,\footnote{Note that the integer $k$ can be chosen arbitrarily large, so this condition is not restrictive. } 
and it is called a \emph{Milnor $\mu$-invariant}, or Milnor number, of length $m+1$. 
Milnor invariants are sometimes referred to as higher order linking numbers, since $\mu_{ij}(L)$ is merely the linking number of component $i$ and $j$, 
while $\mu_{ii}(L)$ is just the framing of the $i$th component. 

For each $k\ge 1$, the $k$th term $SL_k(l)$  of the Milnor filtration 
coincides with the submonoid of $SL(l)$ of string links with vanishing Milnor $\mu$-invariants of length $\le k$, and  
the Milnor map $\mu_{k}$ is strictly equivalent to the collection of all Milnor $\mu$-invariants of length $k+1$.

Recall that two $l$-component string links $L$ and $L'$ are concordant if there is an embedding 
$$ f: \left(\sqcup_{i=1}^l [0,1]_i\right)\times I \longrightarrow \left(D^2\times I\right)\times I, $$  
where $\sqcup_{i=1}^l [0,1]_i$ is the disjoint union of $l$ copies of the unit interval $[0,1]$, 
such that $f\left( (\sqcup_{i=1}^l [0,1]_i)\times \{ 0 \} \right)=L\times \{ 0 \}$ and $f\left( (\sqcup_{i=1}^l [0,1]_i)\times \{ 1 \} \right)=L'\times \{ 1 \}$, and such that $f\left(\partial(\sqcup_{i=1}^l [0,1]_i)\times I\right)=(\partial L) \times I$.   
It is well known that Milnor numbers, hence Milnor maps, are not only isotopy invariants, but also concordance invariants : this is for example shown by Casson in \cite{casson}, although it is already implicit in Stallings' paper \cite{stallings}. 

\subsection{Link-homotopy and the lower central series of the pure braid group}\label{sec:pure} 

Recall that the link-homotopy is an equivalence relation on knotted objects generated by isotopies and self-crossing changes. 
Using the properties of Magnus expansion, Milnor proved that, if $I$ is a sequence \emph{with no repeated index}, 
then the corresponding invariant $\mu_I$ is a link-homotopy invariant, see Theorem 8 of  \cite{Milnor2}. 
Habegger and Lin subsequently proved that string links are classified up to link-homotopy by Milnor invariants with no repeated indices \cite{HL1}. 

More precisely, Habegger and Lin showed that the set 
 $ \bigcup_{m=2}^l \left\{  \mu_{I}\textrm{ $|$ }I\in \mathcal{I}_m \right\} $
forms a complete set of link-homotopy invariants for string links \cite{HL1,HL2}, where for each $m\in \{2, . . . ,l\}$, 
 $$ \mathcal{I}_m := \left\{\quad j_{\tau(1)}...j_{\tau(m-2)}j_{m-1}j_m \quad \left| \quad
                                       \begin{array}{c}
                                       1\le j_1<\cdots<j_{m-2}<j_{m-1}<j_m\le l \\
                                       \tau\in S_{m-2} 
                                      \end{array}\right.
\right\}. $$
In other words, $\mathcal{I}_m$ is the set of all sequences $j_1...j_m$ of $m$ non-repeating integers from $\{1,...,l\}$ such that $j_i<j_{m-1}<j_m$ for all $i\le m-2$. 

In this subsection, we use this result to give an explicit representative for the link-homotopy class of any string link in terms 
of the lower central series of the pure braid group.  

Recall that the pure braid group on $l$ strands $P(l)$ is generated by elements
  $$ A_{i,j}=\sigma_{j-1}\cdot . . . \cdot\sigma_{i+1}\cdot \sigma_{i}^2\cdot \sigma_{i+1}^{-1}\cdot . . . \cdot\sigma_{j-1}^{-1}\textrm{, for }1 \le i < j \le l,  $$
which may be represented geometrically as the pure braid where the $i$th string overpasses the strings ($i+1$), . . . , ($j-1$) and $j$, 
underpasses the $j$th string, then goes back to the $i$th position by overpassing all strings.
For convenience, we also define $A_{i,j}$ for $i>j$, by the convention $A_{i,j}:=A_{j,i}$.  

Given a sequence $J=j_1...j_m$  in $\mathcal{I}_m$, we define the pure braid
\begin{equation}\label{braidBJ}
  B_J^{(l)} = [[. . . [ [A_{j_1,j_2},A_{j_2,j_3}],A_{j_3,j_4}], . . . ] , A_{j_{m-1},j_m}],
\end{equation}
which lies in the $(m-1)$th term $\Gamma_{m-1} P(l)$ of the lower central series.
We simply write $B_J=B_J^{(l)}$ when there is no risk of confusion.

The pure braids $B_{J}$ ($J\in \mathcal{I}_m$) can be used to construct an explicit representative of the link-homotopy class of any string link as follows.
\begin{lemma}\label{lem:braidlh}
Any $l$-component string link $L$ is link-homotopic to $b^L_1\cdots b^L_{l-1}$, where 
\begin{equation}\label{eq:braid}
  b^L_i= \prod_{J\in \mathcal{I}_{i+1}} (B_{J})^{\mu_{J}(b^L_i)
  }\textrm{, where } 
  \mu_{J}(b^L_i)=\left\{\begin{array}{ll}
\mu_{J}(L)&\textrm{if $i=1$},\\ 
& \\
\mu_{J}(L)-\mu_{J}(b^L_1\cdots b^L_{i-1})& \textrm{if $i\geq 2$}.
\end{array}\right.
\end{equation}
\end{lemma}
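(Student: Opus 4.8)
The plan is to reduce the statement to a computation of Milnor numbers. By the Habegger--Lin theorem quoted above, the collection $\bigcup_{m=2}^{l}\{\mu_I\mid I\in\mathcal I_m\}$ is a complete set of link-homotopy invariants for string links, so it suffices to prove that the explicit braid $L':=b^L_1\cdots b^L_{l-1}$ satisfies $\mu_I(L')=\mu_I(L)$ for every $2\le m\le l$ and every $I\in\mathcal I_m$. Everything then rests on two ingredients: an additivity property of Milnor numbers under the stacking product, and a description of the Milnor numbers of the elementary pure braids $B_J$.

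First I would establish the following \emph{additivity lemma}: if $I$ has length $m$ and $y\in SL_{m-1}(l)$, then $\mu_I(xy)=\mu_I(x)+\mu_I(y)$ for every string link $x$. The argument is via the Magnus expansion. The automorphism $A_k(x)$ conjugates each $\alpha_j$ by a longitude of $x$, and since each such longitude has Magnus expansion $1+(\text{degree}\ge 1)$, the induced substitution of $\mathbf Z\langle\langle X_1,\dots,X_l\rangle\rangle$ has the form $X_j\mapsto X_j+(\text{degree}\ge 2)$, hence is unitriangular with respect to the degree filtration. Feeding this into the product formula for longitudes, $l_j(xy)=l_j(x)\cdot A_k(x)\bigl(l_j(y)\bigr)$, and using that $y\in SL_{m-1}(l)$ forces $E(l_j(y))-1$ to have lowest degree $m-1$, one gets $E(A_k(x)(l_j(y)))=E(l_j(y))+(\text{degree}\ge m)$, so that the degree-$(m-1)$ part of $E(l_j(xy))$ is simply the sum of the degree-$(m-1)$ parts of $E(l_j(x))$ and $E(l_j(y))$ (there are no cross-terms, $m-1$ being the lowest nontrivial degree of $E(l_j(y))-1$). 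Extracting the coefficient of the appropriate word gives the claim.

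The second ingredient is that for $J\in\mathcal I_m$ one has $B_J\in\Gamma_{m-1}P(l)\subseteq SL_{m-1}(l)$ (so all Milnor numbers of $B_J$ of length $<m$ vanish), and moreover $\mu_I(B_J)=\delta_{I,J}$ for all $I,J\in\mathcal I_m$; the inclusion $\Gamma_{k}P(l)\subseteq SL_k(l)$ is the classical compatibility of the lower central series of $P(l)$ with the Milnor filtration, while $\mu_I(B_J)=\delta_{I,J}$ follows, with the sign conventions fixed for the $A_{i,j}$, from the longitude computations of Habegger and Lin (the $i$th and $j$th longitudes of $A_{i,j}$ being conjugates of $\alpha_j^{\pm1}$ and $\alpha_i^{\pm1}$, and iterated commutators of braids corresponding to iterated commutators of longitudes). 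Granting these, $\mu_I(L')=\mu_I(L)$ for $I\in\mathcal I_m$ follows by telescoping: since $b^L_k\in SL_k(l)\subseteq SL_m(l)$ for $k\ge m$, additivity and $\mu_I(b^L_k)=0$ give $\mu_I(L')=\mu_I(b^L_1\cdots b^L_{m-1})$; since $b^L_{m-1}\in SL_{m-1}(l)$, additivity gives $\mu_I(b^L_1\cdots b^L_{m-1})=\mu_I(b^L_1\cdots b^L_{m-2})+\mu_I(b^L_{m-1})$; and since each factor $B_J^{\mu_J(b^L_{m-1})}$ of $b^L_{m-1}$ lies in $SL_{m-1}(l)$, repeated additivity together with $\mu_I(B_J)=\delta_{I,J}$ shows that the genuine Milnor number $\mu_I(b^L_{m-1})$ equals the exponent of $B_I$ prescribed in the definition of $b^L_{m-1}$, namely $\mu_I(L)$ when $m=2$ and $\mu_I(L)-\mu_I(b^L_1\cdots b^L_{m-2})$ when $m\ge 3$. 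In either case the displayed identities combine to $\mu_I(L')=\mu_I(L)$, and a final appeal to Habegger--Lin completes the argument.

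I expect the main obstacle to be the second ingredient, specifically the fact that one gets exactly $\mu_I(B_J)=\delta_{I,J}$ on $\mathcal I_m$ rather than a merely triangular relation --- if it were only triangular, the recursive formula of the lemma would need to be corrected. This is precisely where the particular shape of $\mathcal I_m$ (an arbitrary permutation of the first $m-2$ entries but the last two kept ordered) is indispensable, and matching it against the left-normed bracket defining $B_J$ and against the chosen orientations requires care; the cleanest route is probably to borrow the relevant basis statement from Habegger and Lin rather than to expand the iterated commutator of longitudes by hand.
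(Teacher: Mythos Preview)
Your proposal is correct and follows essentially the same approach as the paper: reduce to Habegger--Lin's classification, use additivity of Milnor invariants (the paper simply cites \cite[Lem.~3.3]{MYpjm} for this), and establish $\mu_{J'}(B_J)=\delta_{J,J'}$ for $J,J'\in\mathcal I_m$. The one point worth noting is your ``main obstacle'': the paper resolves the exact identity $\mu_{J'}(B_J)=\delta_{J,J'}$ not by borrowing a basis statement from Habegger--Lin, but by a short induction using Polyak's skein formula for Milnor invariants, which yields $\mu_{j_1\ldots j_k}(B_k)=\mu_{j_1\ldots j_{k-1}}(B_{k-1})$ and hence $\mu_J(B_J)=\mu_{j_1j_2}(A_{j_1,j_2})=1$, with the off-diagonal vanishing handled similarly.
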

\begin{remark}
 This lemma is to be compared with \cite[Thm. 4.3]{yasuhara} and \cite[Thm. 4.1]{MY}, where similar results are given in terms of tree claspers -- see Section \ref{8}. 
\end{remark}

\begin{proof}
In view of the link-homotopy classification result of Habegger and Lin recalled above, the lemma simply follows from a computation of Milnor invariants of  the pure braids $B_{J}$ ($J\in \mathcal{I}_m$). Specifically, using the additivity property of Milnor string link invariants (see e.g. \cite[Lem. 3.3]{MYpjm}), it suffices to show that, for any $m$ and any two sequences $J$ and $J'$ in $\mathcal{I}_m$, we have 
\begin{equation}\label{eq:muVJ}
\mu_{J'}(B_{J})=
\left\{\begin{array}{ll}
1&\text{ if $J=J'$,}\\
0&\text{ otherwise. }
\end{array}
\right.
\end{equation}
(See \cite[Rem. 4.2]{MY}.)
Fixing a sequence $J=j_1...j_m$  in $\mathcal{I}_m$, set 
  $$ B_k = [[. . . [ [A_{j_1,j_2},A_{j_2,j_3}],A_{j_3,j_4}], . . . ] , A_{j_{k-1},j_k}]\in \Gamma_{k-1} P(l), $$ 
for all $k=2, . . . , m$. (In particular, $B_2=A_{j_1,j_2}$, while $B_{k}=B_J$.)  
Using the skein formula for Milnor invariants due to Polyak \cite{Polyak}, one can easily check that, for any $k=3, . . ., m$, we have 
 $$ \mu_{j_1. . . j_{k-1} j_k}(B_k)=\mu_{j_1. . . j_{k-1}}(B_{k-1}).$$
It follows that $\mu_J(B_J)=\mu_{j_1 j_2}(A_{j_1,j_2})=1$, as desired.  
The fact that $\mu_{J'}(B_J)=0$ for any $J'\neq J$ in $\mathcal{I}_m$ follows easily from similar arguments. 
\end{proof}

The following notation will be useful in the next sections. 
Let $SL_m^h(l)$ be the set of $l$-component  string links with vanishing  Milnor link-homotopy invariant of length $\leq m$, that is, 
 $L\in SL_m^h(l)$ if and only if $L$ is link-homotopic to $b^L_mb^L_{m+1}\cdots b^L_{l-1}$ as in Lemma \ref{lem:braidlh}.
Note that we have a descending filtration
 $$ SL(l)\supset SL^h_1(l)\supset SL^h_2(l)\supset . . . \supset SL^h_k(l)\supset . . . \supset SL^h_l(l).$$

\section{The universal $sl_2$ invariant}\label{3}

In the rest of this paper, we use the following $q$-integer notation.
\begin{align*}
&\{i\}_q = q^i-1,\quad  \{i\}_{q,n} = \{i\}_q\{i-1\}_q\cdots \{i-n+1\}_q,\quad  \{n\}_q! = \{n\}_{q,n},\\
&[i]_q = \{i\}_q/\{1\}_q,\quad  [n]_q! = [n]_q[n-1]_q\cdots [1]_q, \quad \begin{bmatrix} i \\ n \end{bmatrix} _q  = \{i\}_{q,n}/\{n\}_q!,
\end{align*}
for $i\in \mathbb{Z}, n\geq 0$.

\subsection{Quantized enveloping algebra $U_{\hbar}(sl_2)$}

We first recall  the definition of the quantized enveloping algebra $U_{\hbar}(sl_2)$, following the notation in \cite{H2,sakie2}.

We denote by  $U_{\hbar}=U_{\hbar}(sl_2)$ the $\hbar$-adically complete $\mathbb{Q}[[\hbar]]$-algebra,
topologically generated by  $H, E,$ and $F$, defined by the relations
\begin{align*}
HE-EH=2E, \quad HF-FH=-2F, \quad EF-FE=\frac{K-K^{-1}}{q^{1/2}-q^{-1/2}},
\end{align*}
where we set 
\begin{align*}
q=\exp {\hbar},\quad K=q^{H/2}=\exp\frac{{\hbar}H}{2}.
\end{align*}
\noindent
We equip $U_{\hbar}$  with a topological $\mathbb{Z}$-graded algebra structure with  $\deg F=-1$,  $\deg E=1$, and   $\deg H=0$.

There is a unique  complete ribbon Hopf algebra  structure  on  $U_{\hbar}$   such that
\begin{align*}
\Delta_{\hbar} (H)&=H\otimes 1+1\otimes H, \quad  \varepsilon_{\hbar}  (H)=0, \quad S_{\hbar} (H)=-H,
\\
\Delta_{\hbar}  (E)&=E\otimes 1+K\otimes E, \quad \varepsilon_{\hbar}  (E)=0, \quad  S_{\hbar} (E)=-K^{-1}E,
\\
\Delta_{\hbar}  (F)&=F\otimes K^{-1}+1\otimes F, \quad  \varepsilon_{\hbar}  (F)=0, \quad  S_{\hbar} (F)=-FK.
\end{align*}

The \emph{universal $R$-matrix} and its inverse are given by
\begin{align}\
R&=D\bigg(\sum_{n\geq 0}q^{\frac{1}{2}n(n-2)}\frac{(q-1)^n}{[n]_q!}F^n\otimes E^n\bigg),\label{rm1}
\\
R^{-1}&=\bigg(\sum_{n\geq 0}(-1)^nq^{-\frac{n}{2}}\frac{(q-1)^n}{[n]_q!}F^n\otimes E^n\bigg)D^{-1},\label{rm2}
\end{align} 
where $D=q^{\frac{1}{4}H\otimes H} =\exp \big(\frac{\hbar}{4}H\otimes H\big)\in U_{\hbar}^{\hat {\otimes }2}$.
For simplicity, we set  $R^{\pm 1}= \sum_{n\geq 0} \alpha^{\pm}_n \otimes \beta^{\pm}_n$ with
\begin{align*}
\alpha _n \otimes \beta _n(&=\alpha^+ _n \otimes \beta^+ _n)=D\Big(q^{\frac{1}{2}n(n-2)}\frac{(q-1)^n}{[n]_q!}F^n\otimes E^n\Big),
\\
\alpha _n^- \otimes \beta _n^-&=D^{-1}\Big((-1)^nq^{-\frac{n}{2}}\frac{(q-1)^n}{[n]_q!}F^nK^n\otimes K^{-n}E^n\Big).
\end{align*}
Note that the right-hand sides above are  sums of infinitely many tensors of the form  $x\otimes y$ with $x,y\in U_{\hbar}$, which we denote by  $\alpha ^{\pm}_n \otimes \beta^{\pm} _n$ formally.

\subsection{Universal $sl_2$ invariant for string links}\label{univinv}

In this section, we recall the definition of the universal $sl_2$ invariant of  string links.

For  an $n$-component string link $L=L_1\cup \cdots \cup L_n$, we define the universal $sl_2 $ invariant $J(L)\in U_{\hbar}^{\hat {\otimes }n}$  in three steps as follows. We follow the notation in \cite{sakie2}.

\textbf{Step 1. Choose a diagram.}
We choose a diagram $\tilde L$ of $L$ which is obtained by pasting, horizontally and vertically, 
copies of the fundamental tangles depicted in Figure \ref{fig:fundamental}.  
We call such a diagram \textit{decomposable}.
\begin{figure}[!h]
\centering
\includegraphics[width=9cm,clip]{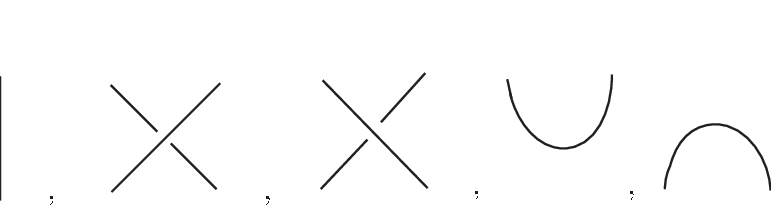}
\caption{Fundamental tangles, where the orientations of the strands are arbitrary.
 }\label{fig:fundamental}
\end{figure}

\textbf{Step 2.  Attach labels.}
We attach labels on the copies of the fundamental tangles in the diagram, 
following the rule described in Figure \ref{fig:cross}, where  $S_{\hbar}'$ should be replaced with $S_{\hbar}$
if the string is oriented upward, and with the identity otherwise.  
We do not attach any label to the other copies of fundamental tangles, i.e.,  to a straight strand and to a local maximum or minimum oriented from right to left.
See Figure \ref{fig:T_h} for an (elementary) example.

\begin{figure}[!h]
\centering
\begin{picture}(300,70)
\put(50,25){\includegraphics[width=7.5cm,clip]{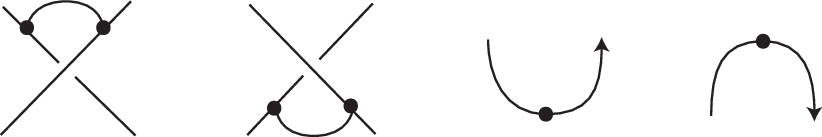}}
\put(40,70){$(S_{\hbar}'\otimes S_{\hbar}')(R)$}
\put(100,10){$(S_{\hbar}'\otimes S_{\hbar}')(R^{-1})$}
\put(188,16){$K$}
\put(243,60){$K^{-1}$}
\end{picture}
\caption{How to place labels  on the fundamental tangles.}\label{fig:cross}
\end{figure}

\textbf{Step 3.  Read the labels.}
We define the $i$th tensorand of $J({L})$ as the product 
of the labels on the $i$th component of $\tilde L$, where the labels are read off along $L_i$
reversing the orientation, and  written from left to  right.
Here, the labels on the crossings are read as in Figure   \ref{fig:cross2}.

\begin{figure}[!h]
\centering
\begin{picture}(300,120)
\put(50,20){\includegraphics[width=5.8cm,clip]{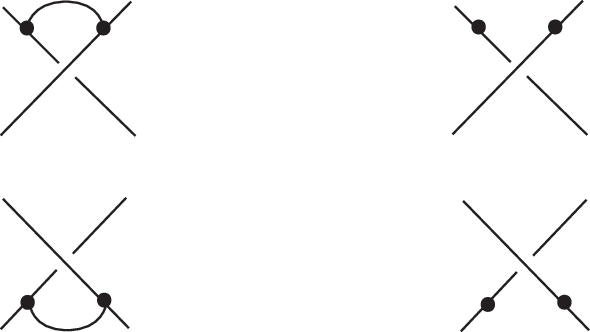}}
\put(42,120){$(S_{\hbar}'\otimes S_{\hbar}')(R)$}
\put(100,90){$=$}
\put(120,90){$\sum_{n\geq 0}$}
\put(150,95){$S_{\hbar}'(\alpha_n)$}
\put(208,95){$S_{\hbar}'(\beta_n)$}

\put(42,10){$(S_{\hbar}'\otimes S_{\hbar}')(R^{-1})$}
\put(100,40){$=$}
\put(120,30){$\sum_{m\geq 0}$}
\put(150,35){$S_{\hbar}'(\alpha^{-}_m)$}
\put(208,35){$S_{\hbar}'(\beta^{-}_m)$}
\end{picture}
\caption{How to read the labels on crossings.}\label{fig:cross2}
\end{figure}

As is well known \cite{O},  $J(L)$ does not depend on the choice of the diagram, and thus defines an isotopy invariant of string links.

\begin{figure}[!h]
\centering
\begin{picture}(300,100)
\put(70,20){\includegraphics[width=4.5cm,clip]{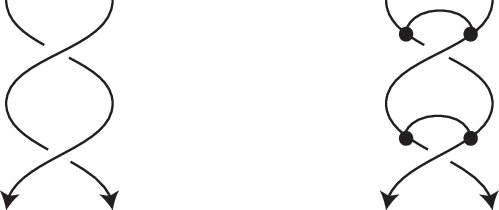}}
\put(35,40){$\tilde A \ =$}
\put(200,60){$R$}
\put(200,30){$R$}
\put(60,0){(a)}
\put(180,0){(b)}
\end{picture}
\caption{(a) A diagram $\tilde{A}$ of the string link $A$ (b) The label put on $\tilde{A}$. }\label{fig:T_h}
\end{figure}

For example, for the string link $A$ shown in Figure \ref{fig:T_h}, we have
\begin{align}
\begin{split}\label{univc}
J({A})&=\sum_{m,n\geq 0} \beta_m\alpha_n\otimes \alpha_m\beta_n
\\
&=D\bigg(\sum_{m\geq 0}q^{\frac{1}{2}m(m-2)}\frac{(q-1)^m}{[m]_q!}E^m\otimes F^m\bigg)
D\bigg(\sum_{n\geq 0}q^{\frac{1}{2}n(n-2)}\frac{(q-1)^n}{[n]_q!}F^n\otimes E^n\bigg)
\\&=D^2\bigg(\sum_{m,n\geq 0}q^{\frac{1}{2}m(m-2)+{\frac{1}{2}n(n-2)+m^2}}\frac{(q-1)^{m+n}}{[m]_q![n]_q!} E^mK^mF^n\otimes F^mK^{-m}E^n\bigg),
\end{split}
\end{align} 
where the last identity follows from 
\begin{align*}
&D(1\otimes x)=(K^{|x|}\otimes x)D,
\end{align*}
for $x\in U_{\hbar}$ an homogeneous element of degree $|x|$.

Note that 
\begin{align*}
J({A})\equiv1+c\hbar \oh{2}, 
\end{align*}
where
 $c$ denotes the symmetric element
\begin{equation}\label{cs}
c=\frac{1}{2}H\otimes H+F\otimes E +E\otimes F.
\end{equation}

\subsection{Universal $sl_2$ invariant and linking number}\label{sec:Jlk}

We now recall how the linking number and framing can be simply derived from the ``coefficient'' of $\hbar$ in  the universal $sl_2$ invariant.
Before giving a precise statement (Proposition \ref{sc}), we need to introduce a few extra notation, which will be used throughout the paper.

For $1\leq i\leq n$,  and  for  $x\in U_{\hbar}$, we define
$x^{(l)}_i\in U_{\hbar}^{\hat \otimes l }$  by
$$x^{(l)}_i=1\otimes \cdots \otimes x
\otimes \cdots \otimes 1,
$$
where $x$ is at the $i$th position.

More generally, for $1\leq j_1,\ldots, j_m \leq l$  and $y=\sum y_1\otimes \cdots \otimes  y_m\in U_{\hbar}^{\hat \otimes m}$, 
we define  $y^{(l)}_{j_1\ldots j_m} \in U_{\hbar}^{\hat \otimes l }$ by
\begin{align*}
y^{(l)}_{j_1\ldots j_m}=\sum (y_1)^{(l)}_{j_1}\cdots (y_m)^{(l)}_{j_m}.
\end{align*}

For $x\in U_{\hbar}^{\hat \otimes l}$ such that $x\equiv1\oh{}$, set 
\begin{align*}
\coef(x)=\frac{x-1}{\hbar} \in U_{\hbar}^{\hat \otimes l} /\hbar U_{\hbar}^{\hat \otimes l},
\end{align*}
i.e., we have $x\equiv 1+\coef(x)\hbar \oh{2}$.

Note that $J(L)\equiv1 \oh{}$ for any string link $L$, by definition.

\begin{proposition}\label{sc}
For $L\in SL(l)$ with linking matrix $(m_{ij})_{1\leq i,j\leq l}$,
we have
\begin{align*}
\coef(J(L))&=\frac{1}{2} \sum_{1\leq i,j \leq l} m_{ij}c^{(l)}_{ij} 
\\
&=\sum_{1\leq i<j \leq l} m_{ij}c^{(l)}_{ij}+\frac{1}{2} \sum_{1\leq i \leq l} m_{ii}c^{(l)}_{ii}.  
\end{align*}
\end{proposition}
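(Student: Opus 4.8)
The plan is to compute the coefficient of $\hbar$ in $J(L)$ directly from a convenient diagram, using the fact that $\coef$ is additive under stacking and the structure of the $R$-matrix modulo $\hbar^2$. First I would observe that, since $R \equiv 1 + c\hbar \pmod{\hbar^2}$ with $c$ the symmetric element in \eqref{cs} (this is exactly the computation leading to $J(A) \equiv 1 + c\hbar$ recorded after \eqref{univc}, and $D = 1 + \tfrac{\hbar}{4}H\otimes H + O(\hbar^2)$ contributes the $\tfrac12 H\otimes H$ term), each positive crossing between strands $i$ and $j$ contributes $c^{(l)}_{ij}$ to $\coef(J(L))$ and each negative crossing contributes $-c^{(l)}_{ij}$, while the labels $K^{\pm 1}$ at extrema are $\equiv 1 \pmod{\hbar}$ and contribute nothing. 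The key point is that $\coef$ is a derivation-type map: if $x \equiv 1 + a\hbar$ and $y \equiv 1 + b\hbar \pmod{\hbar^2}$ then $xy \equiv 1 + (a+b)\hbar \pmod{\hbar^2}$, so $\coef$ turns the multiplicative structure (products of labels along components, and stacking of tangles) into a sum over all crossings of the diagram.

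Next I would note that the self-crossings of a single component $i$ come in pairs whose signs depend only on the framing: a diagram of $L_i$ with framing $f_i = m_{ii}$ contributes, after summing the crossing signs with the writhe correction built into the blackboard-framing convention, exactly $\tfrac12 m_{ii}\, c^{(l)}_{ii}$ — the factor $\tfrac12$ reflecting that reading the label $c = \tfrac12 H\otimes H + F\otimes E + E\otimes F$ twice along the same component and multiplying in $U_\hbar$ collapses $F\otimes E + E\otimes F$ appropriately; more cleanly, one checks this normalization on the single-component examples (e.g.\ the Hopf-link-type string link $A$ and the $1$-framed unknot) and invokes additivity under stacking. For a pair of distinct components $i < j$, the crossings between them have signed count equal to $m_{ij} = \mathrm{lk}(L_i, L_j)$, each contributing $c^{(l)}_{ij}$, giving the term $\sum_{i<j} m_{ij}\, c^{(l)}_{ij}$. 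Assembling these, and using $c^{(l)}_{ij} = c^{(l)}_{ji}$ (by symmetry of $c$), yields both displayed forms of the formula.

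The main obstacle I anticipate is not conceptual but bookkeeping: getting the factor of $\tfrac12$ on the diagonal terms exactly right, and more importantly verifying that the $\hbar$-coefficient really is a crossing-by-crossing sum with the $K^{\pm1}$ and $S_\hbar'$ labels genuinely invisible mod $\hbar$. The cleanest route is probably to prove the statement first for the generating string links — the trivial string link, the string links obtained from the $A_{i,j}$ (or from a single crossing between two strands), and the $1$-framed single-component string link — where the computation is the short one already carried out in \eqref{univc}, and then extend to all of $SL(l)$ by the additivity of $\coef(J(-))$ under stacking together with the obvious additivity of the linking matrix. One subtlety to handle carefully is that stacking is not group-like — $SL(l)$ is only a monoid — but since we are working modulo $\hbar^2$ this causes no trouble: $\coef(J(L\cdot L')) = \coef(J(L)) + \coef(J(L'))$ holds because $J(L\cdot L') = J(L)J(L')$ in $U_\hbar^{\hat\otimes l}$ and $\coef$ is additive on elements $\equiv 1 \pmod \hbar$.
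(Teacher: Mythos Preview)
Your proposal contains two concrete errors.

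First, $R \not\equiv 1 + c\hbar \pmod{\hbar^2}$. A direct computation gives $\coef(R) = \tfrac{1}{4}H\otimes H + F\otimes E$, which is not symmetric; your parenthetical that $D$ contributes the $\tfrac{1}{2}H\otimes H$ term is already off by a factor of two. What actually holds is $\coef(R^\varepsilon) + \coef(R^\varepsilon_{21}) = \varepsilon c$, so it is only after summing the contributions of all crossings between strands $i$ and $j$ that one obtains $m_{ij}c^{(l)}_{ij}$. The computation $J(A)\equiv 1+c\hbar$ that you cite involves the product $R_{21}R$, not $R$ alone.

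Second, and this is exactly the concern you raise in your last paragraph and then set aside: the extremum labels $K^{\pm 1}$ are \emph{not} invisible at order $\hbar$. Being $\equiv 1 \pmod{\hbar}$ only says the $\hbar^0$ term is $1$; one has $\coef(K^{\pm 1}) = \pm H/2$. Similarly, each self-crossing contributes a stray $\pm H/2$ term depending on whether it is ``left-connected'' or ``right-connected'' (e.g.\ $\coef(R^{(1)}_{11}) = \tfrac12(c^{(1)}_{11}-H)$ versus $\coef((R_{21})^{(1)}_{11}) = \tfrac12(c^{(1)}_{11}+H)$). These $H$-terms do not cancel crossing-by-crossing; the paper eliminates them by proving the combinatorial identity $l - r - M + m = 0$ relating the numbers of left- and right-connected self-crossings to the numbers of left-to-right maxima and minima, checked via invariance under Turaev's local moves. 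Your fallback route through ``generators and additivity under stacking'' does not repair this either: $SL(l)$ is only a monoid and is not generated by the trivial string link, the $A_{i,j}$, and a framed arc, so a generic string link is simply not a stacking product of these.
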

\begin{remark}
This result is similar to the well-known formula expressing the degree one part of the (framed) Kontsevich integral in terms of the linking matrix, 
which is generalized by Habegger and Masbaum \cite[Thm. 6.1]{HMa} with respect to Milnor invariants, where the case $n=1$  corresponds to the formula for the  linking matrix:
\begin{align} 
  Z^t(L) = 1 + \mu_{1}(L) + (\textrm{terms of degree $\ge 2$}) \in  \mathcal{A}^t(l), 
\end{align}
noting that Milnor numbers of length $2$ are the coefficients of the linking matrix (see Example \ref{ex:mu1}).  
Our main result in this paper generalizes Proposition \ref{sc} with respect to Milnor invariants, in a similar way to  \cite[Thm. 6.1]{HMa}. 
\end{remark}
In the rest of this section, we prove Proposition \ref{sc} in an elementary way.

\begin{proof}[Proof of Proposition \ref{sc}]
Let  $L\in SL(l)$, and choose a decomposable diagram $\tilde L=\tilde L_1\cup \cdots \cup \tilde L_l$ such that each crossings has both strands oriented downwards.
Denote by $C(\tilde L)$ the set of the crossings, and by $M(\tilde L)$  the set of  local maxima and minima oriented from left to right.
For   $a\in C(\tilde L) \cup M(\tilde L)$, let $J(a)\in U_{\hbar}^{\hat \otimes l}$ be the element obtained by reading only the labels on $a$, as indicated in Step 2
of the definition of $J(L)$.
Note that $J(a)\equiv1 \oh{}$ for each $a\in C(\tilde L) \cup M(\tilde L)$, and 
\begin{equation}\label{scl}
\coef(J(L))=\sum _{a\in C(\tilde L)\cup M(\tilde L)}\coef(J(a)).
\end{equation}

Now, for $1\leq i< j\leq l$, let $C_{i,j}(\tilde L)\subset C(\tilde L)$ be the subset of crossings between $\tilde L_i$ and $\tilde L_j$.
Set $R_{21}=R_{21}^{(2)}$. Since  we have
\begin{align*}
\coef(R^{\varepsilon})+\coef(R^{\varepsilon}_{21})=\varepsilon c, 
\end{align*}
for $\varepsilon=\pm 1$,  it follows that 
\begin{align}\label{cij}
\sum _{a\in C_{i, j}(\tilde L)}\coef(J(a)) =m_{ij}c^{(l)}_{ij}.
\end{align} 

Similarly, for $1\leq i\leq l$, let $C_{i}(\tilde L)\subset C(\tilde L)$ be the subset of self-crossings of $\tilde L_i$, and 
 $M_{i}(\tilde L)\subset M(\tilde L)$  the subset of local maxima and minima oriented from left to right in $\tilde L_i$.
Let us consider $J(a)$ for $a\in C_i(\tilde L)\cup M_i(\tilde L)$, for $l=i=1$ for simplicity.
Notice that each crossing in $C_{1}(\tilde L)$ is either left-connected or right-connected,  where a downward oriented crossing is called 
left (resp. right)-connected if its left (resp. right) outgoing strand is connected to the left (resp. right) ingoing strand in $\tilde L$. 
For a left (resp. right)-connected positive crossing $a\in C_1(\tilde L)$, we have $J(a)=R^{(1)}_{11}$ (resp. $J(a)=(R_{21})^{(1)}_{11}$),
and on a left (resp. right)-connected negative crossing $b\in C_1(\tilde L)$, we have $J(b)=(R_{21}^{-1})^{(1)}_{11}$ (resp. $J(b)=(R^{-1})^{(1)}_{11}$).
Recall that we put $K$ (resp. $K^{-1}$) on a local maximum (resp. minimum) oriented left to right.
For these labels we have
\begin{align*}
\coef(R^{(1)}_{11})&=\frac{c^{(1)}_{11} - H}{2},
\quad \coef((R^{-1})^{(1)}_{11})=\frac{-c_{11}^{(1)} + H}{2},
\\
 \coef((R_{21})^{(1)}_{11})&=\frac{c_{11}^{(1)} + H}{2},
\quad \coef((R_{21}^{-1})^{(1)}_{11})=\frac{-c_{11}^{(1)} - H}{2},
\\
\coef(K)&=\frac{ H}{2}, \quad \coef(K^{-1})=\frac{-H}{2}.
\end{align*} 
We consider the sum of these coefficients over all labels on $C_1(\tilde L)\cup M_1(\tilde L)$.
Actually, if $l$ (resp. $r$) denotes the number of left-connected (resp. right-connected) crossings in $C_{1}(\tilde L)$, and if 
$M$ (resp. $m$) denotes the number of local maximum (resp. minimum)  in $M_{1}(\tilde L)$, then it is not difficult to check that 
\begin{equation}\label{wutang}
   l - r - M + m= 0
\end{equation}
\noindent (By \cite[Thm. XII.2.2]{Kassel}, and since $l- r - M + m$ is clearly invariant under a crossing change, it suffices to prove that 
this quantity is invariant under each of the moves of  \cite[Fig. 2.2--2.9]{Kassel}:  
this is easily checked by a case-by-case study of all possible types of crossings involved in the moves).  
This implies that
\begin{align*}
\sum _{a\in C_{1}(\tilde L)}\coef(J(a))+\sum_{b\in M_1(\tilde L)} \coef(J(b))= \frac{1}{2}m_{11}c^{(1)}_{11}.
\end{align*} 
This,  together with Equations (\ref{cij})  and (\ref{scl}), implies the desired formula. 
\end{proof}

\section{Diagrammatic approach}\label{4}
\subsection{Jacobi diagrams} \label{sec:jacobi}

We mostly follow the notation in \cite{HMa}.

A \emph{Jacobi diagram} is a finite uni-trivalent graph, such that each trivalent vertex is equipped with a cyclic ordering of its three incident half-edges. 
In this paper we require that each connected component of a Jacobi diagram has at least one univalent vertex. 
The \emph{degree} of a Jacobi diagram is half its number of  vertices.

Let $X$ be a compact oriented $1$-manifold. A \emph{Jacobi diagram on $X$} is a Jacobi diagram whose univalent vertices are disjointly embedded in $X$. 
Let $\mathcal{A}(X)$ denote the $\mathbb{Q}$-vector space spanned by Jacobi diagrams on $X$, subject to the 
AS, IHX and STU relations depicted in Figure \ref{relations}.
Here as usual \cite{BN}, we use bold lines to depict the $1$-manifold $X$ and dashed ones to depict the Jacobi 
diagram, and the cyclic ordering at a vertex is given by the counter-clockwise orientation in the plane of the figure. 
\begin{figure}[!h]
\centering
\includegraphics{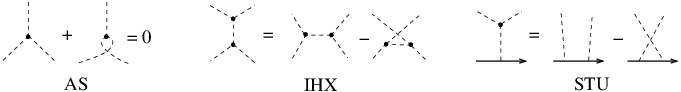}
\caption{The relations AS, IHX and STU. } \label{relations}
\end{figure}
We denote by $\mathcal{A}_k(X)$ the subspace spanned by Jacobi diagrams of degree $k$.
Abusing notation, we still denote by $\mathcal{A}(X)$ its completion with respect to the degree, i.e., $\mathcal{A}(X)=\prod_{k\geq 0} \mathcal{A}_k(X)$. 

In this paper we shall restrict our attention to the case $X=\coprod_{j=1}^l I_j$,
where each $I_j$ is a copy of the interval $I=[0,1]$.
For simplicity, set  $\mathcal{A}(l)=\mathcal{A}(\coprod_{j=1}^l I_j)$.
Note that $\mathcal{A}(l)$ has an algebra structure with multiplication defined by stacking. 

We denote by $\mathcal{B}(l)$ the completed $\mathbb{Q}$-vector space spanned by Jacobi diagrams whose univalent vertices are labelled by elements of the set $\{1,...,l\}$, subject to the  AS and IHX relations.
 Here completion is given by the degree as before.
Note that $\mathcal{B}(l)$ has an algebra structure with multiplication defined by  disjoint union.

There is a natural graded $\mathbb{Q}$-linear  isomorphism  \cite{BN}
  \begin{equation*} 
    \chi : \mathcal{B}(l)\to \mathcal{A}(l),
  \end{equation*}
 which maps a diagram to the average of all possible combinatorially 
distinct ways of attaching its $i$-colored vertices to the $i$th interval, for $i=1,\ldots , l$. 
Note that $\chi$ is not an algebra homomorphism.

In what follows, we focus only on the subspace $\mathcal{A}^t(l)$ of $\mathcal{A}(l)$, 
which is the graded quotient of $\mathcal{A}(l)$ by the space spanned  by Jacobi diagrams containing non-simply connected diagrams.   It follows that $\mathcal{B}^t(l)=\chi^{-1}(\mathcal{A}^t(l))$ is the commutative polynomial algebra on the subspace $\mathcal{C}^t(l)$ spanned by \emph{trees}, that is, by connected and simply connected Jacobi diagrams.

Let us also denote by $\mathcal{A}^h(l)$ the graded quotient of $\mathcal{A}^t(l)$ by the space spanned  by Jacobi diagrams containing a chord between the same component of $\coprod_{j=1}^l I_j$. 
Similarly, denote $\mathcal{B}^h(l):=\chi^{-1}(\mathcal{A}^h(l))$.  Then $\mathcal{B}^h(l)$ is the commutative polynomial algebra on the subspace  $\mathcal{C}^h(l)$ spanned by trees with distinct labels \cite{BN}.

As above, we denote by $\mathcal{C}^t_k(l)$ and $\mathcal{C}^h_k(l)$ the respective subspaces of $\mathcal{C}^t(l)$ and $\mathcal{C}^h(l)$ spanned by Jacobi diagrams of degree $k$.

For any sequence $I=(i_1,\ldots, i_{m})$ of integers in  $\{1,\ldots, l\}$, let $T^{(l)}_I$ be the tree Jacobi diagram of degree $(m-1)$ labeled by $I$ as shown in Figure \ref{fig:TI}.
\begin{figure}[!h]
\centering
  \input{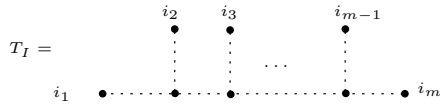}
  \caption{The tree Jacobi diagram $T^{(l)}_I$ for $I=(i_1,\ldots,i_{m})$. }\label{fig:TI}
\end{figure}
It is not difficult to see, by the AS and IHX relations, that  $\mathcal{C}_m^t(l)$ is spanned by diagrams $T^{(l)}_I$ indexed by sequences 
$I=(i_1,\ldots, i_{m})$ of integers in  $\{1,\ldots, l\}$,
while $\mathcal{C}_m^h(l)$ is spanned by those with distinct integers.

\subsection{Kontsevich integral and Milnor map}\label{sec:HM}

A \textit{tangle} is a proper embedding in $D^2\times [0,1]$ of a compact, oriented $1$-manifold $X$, whose
boundary points are on the two parallel lines $[-1,1]\times \{0\} \times \{0\}$ and $[-1,1]\times  \{0\} \times \{1\}$, where we use the parametrization $D^2=\{(x,y)\in \mathbb{R}^{2} | \sqrt{x^2+y^2} \leq 1 \}$. 
We further require that a tangle is equipped with a framing
and with a q-structure, i.e., a consistent collection of parentheses on each of the two sets of boundary points. 
In the case where $T$ is a string link, we assume that the q-structure is the same on both sets of boundary
points.\footnote{As we will see below, we only consider here the first non-vanishing term of $Z(T)-1$, which does not depend on this q-structure. }

The \emph{Kontsevich integral} $Z(T)$ of a  framed q-tangle $T$ lives in the space $\mathcal{A}(X)$ of Jacobi diagrams on $X$ \cite{Ko}. 
We shall not review the definition here, but refer the reader to \cite{BN,CD,ohtsuki} for surveys. 
In this paper, we use the combinatorial definition  as in \cite[Sec. 6.4]{ohtsuki}.\footnote{Note in particular that what is denoted here by $Z(T)$ is denoted by $\hat{Z}(T)$ in \cite{ohtsuki}. } Although this invariant depends on the choice of an associator, this choice will not be relevant in this paper. 

A fundamental property of the Kontsevich integral is its universality, over $\mathbb{Q}$, among finite type (or Vassiliev) invariants and among quantum invariants, in the sense that any such invariant can be recovered from the Kontsevich integral by post-composition with an appropriate map, called \emph{weight system}.

Bar-Natan \cite{BN2} and Lin \cite{Lin} proved that Milnor invariants for string links are finite type invariants, and thus can be recovered from the Kontsevich integral. 
This connection was made completely explicit by Habegger and Masbaum, who  showed that Milnor invariants determine and are determined 
by the so-called tree-part of the Kontsevich integral \cite{HMa}. 
In order to state this result, we first need the following diagrammatic formulation for the image of the Milnor map defined in Section \ref{sec:milnormap}.  

Denote by $H$ the abelianization $\F / \Gamma_2 \F$ of the free group $\F$, and denote by $L(H)=\bigoplus_k L_k(H)$ the free $\mathbb{Q}$-Lie algebra on $H$. 
Note that $L_k(H)$ is isomorphic to  $(\Gamma_k \F / \Gamma_{k+1} \F)\otimes \mathbb{Q}$, so that $\mu_{k}$ can be regarded as taking values in $H\otimes L_k(H)$. 
It turns out that the Milnor map $\mu_{k}$ actually takes values in the kernel $D_k(H)$ of the Lie bracket map 
$H\otimes L_k(H)\rightarrow L_{k+1}(H)$, and that $D_k(H)$ identifies 
with the space $\mathcal{C}^t_{k}(l)$, as we now explain. 
Let $T$ be a tree Jacobi diagram in $\mathcal{C}^t_{k}(l)$. 
To each univalent vertex $v_0$ of $T$, we associate an element $T_{v_0}$ of $L_{k}(H)$ as follows. 
For any univalent vertex $v\neq v_0$, label the incident edge of $T$ by $c_v=\alpha_j\in \F$, where $j$ is the label of $v$. 
Next, label all edges of $T$ by recursively assigning the label 
$[a,b]$ to any edge which meets an $a$-labelled and a $b$-labelled edge at a trivalent vertex (following the cyclic ordering). 
The last step of this process assigns a label to the edge incident to $v_0$: this final label is the desired element $T_{v_0}$ of $L_{k}(H)$. 
Using this, we can define a $\mathbb{Q}$-linear isomorphism 
 \begin{equation}\label{isomu}
   \mathcal{C}^t_{k}(l)\rightarrow D_k(H)
 \end{equation}
by sending a tree $T$ to the sum $\sum_v c_v\otimes T_v$, where the sum ranges over the set of all univalent vertices of $T$. 
\begin{example}\label{ex:isom}
A single chord with vertices labeled $i$ and $j$ is mapped to $\alpha_i\otimes \alpha_j + \alpha_j\otimes \alpha_i$, which is an element of $D_1(H)$ by antisymmetry. (Note in particular that for $i=j$, the corresponding diagram is mapped to $2.\alpha_i\otimes \alpha_i$.) \\
Similarly, an $Y$-shaped diagram with univalent vertices labeled $i$, $j$ and $k$ (following the cyclic ordering) 
is mapped to the sum $\alpha_i\otimes [\alpha_j,\alpha_k] + \alpha_j\otimes [\alpha_k,\alpha_i] + \alpha_k\otimes [\alpha_i,\alpha_j]$ : 
clearly, this is an element of $D_2(H)$ by the Jacobi identity. 
\end{example}
In the rest of this paper, we implicitly identify the image of the Milnor map $\mu_{k}$ with $\mathcal{C}^t_k(l)$ via the isomorphism (\ref{isomu}).  

Now, Habegger-Masbaum's result can be simply formulated as follows. 
Let $L\in SL_m(l)$ be an $l$-component string link with vanishing Milnor invariants of length up to $m$. 
The tree-part of the Kontsevich integral of $L$, which is defined as $Z^t=p^t\circ \chi^{-1}\circ Z$, 
where $p^t:\mathcal{B}(l)\rightarrow \mathcal{B}^t(l)$ is the natural projection, is then given by 
 \begin{align*}
Z^t(L) = 1 + \mu_{m}(L) + \textrm{terms of degree $\ge m+1$}, 
  \end{align*}
where $1$ denotes the empty Jacobi diagram.
In particular, the leading term of $Z^t-1$ does not depend on the choice of q-structure, and lives in the space $\mathcal{C}^t_m(l)$ of degree $m$ tree Jacobi diagrams.  

In \cite{HMa}, it is also proved that $Z^t$ is the universal finite type concordance invariant over $\mathbb{Q}$, which implies in particular that it determines Milnor invariants. 

Furthermore, Habegger and Masbaum showed that, for $L\in SL^h_m(l)$, we have 
\begin{align}\label{equ:HMh}
  Z^h(L) = 1 + \mu^h_{m}(L) + \textrm{terms of degree $\ge m+1$}, 
\end{align}
where $Z^h$ is the Kontsevich integral $Z$ composed with the projection $\mathcal{B}(l)\rightarrow \mathcal{B}^h(l)$ \cite{HMa}, and where $\mu^h_{m}$ is the link-homotopy reduction of the Milnor map $\mu_{m}$, which is defined as 
$\mu^h_{m} =  p^h\circ \mu_{m}$, with $p^h:\mathcal{C}^t(l)\rightarrow \mathcal{C}^h(l)$ the natural projection. 
(Note in particular that the leading term of $Z^h-1$ lives in $\mathcal{C}^h_m(l)$.) 

\subsection{Weight system associated to $sl_2$}\label{sec:weight}

Recall that the Lie algebra $sl_2$ is the 3-dimensional Lie algebra over $\mathbb{Q}$ generated by $h, e,$ and $f$ with Lie bracket
\begin{align*}
[h,e]=2e, \quad [h,f]=-2f, \quad [e,f]=h.
\end{align*}

Let $U=U(sl_2)$ denote the universal enveloping algebra of $sl_2$, and $S=S(sl_2)$ the symmetric algebra of $sl_2$.
There is a well-known commutative diagram  \cite{wheels} 
\begin{align*}
\begin{CD}
\mathcal{A}(l)  @>{W}>> U^{\otimes l} [[\hbar]]  \\
@AA{ \chi}A  @AA {\beta}A  \\
\mathcal{B}(l)  @>{W}>> S^{\otimes l}[[\hbar]],
\end{CD}
\end{align*}
where $\chi$ is the isomorphism defined in Section \ref{sec:jacobi},  $\beta$ is the $\mathbb{Q}$-linear isomorphism induced by the Poincar\'e-Birkhoff-Witt isomorphism 
$S \cong U$, sending a monomial $v_1\cdots v_m\in S$ to $\sum_{\sigma \in S(m)} \frac{1}{m!} v_{\sigma(1)}\cdots v_{\sigma(m)}\in U$, 
and where $W$ is the weight system associated to $sl_2$. 

In this paper we will make use of the map $W$ defined on the space $\mathcal{B}(l)$ of labeled Jacobi diagrams, and more precisely of its restriction to $\mathcal{B}^t(l)$, and thus recall its definition below. 
More precisely, we first define a map $w_m\co \mathcal{C}^t_m(l) \to S^{\otimes l}$, and then 
define $W\co \mathcal{B}^t(l) \to S^{\otimes l}[[\hbar]]$ as the $\mathbb{Q}$-algebra homomorphism such that  $W(D)=w_m(D)\hbar^m$ for $D\in  \mathcal{C}^t_m(l)$.

For  $m=1$, we simply define $w_1$ by
\begin{align}
 w_1(D_{ij}) = c_{ij}^{(l)}\in  S^{ \otimes l},
\end{align}
where $D_{ij}$ is a single chord with vertices labeled by $i$ and $j$ (possibly $i=j$), 
and where\footnote{Abusing notation, we denote by the same letter $c$ the element of $U_{\hbar}^{\otimes 2}$ defined in (\ref{cs}) and the corresponding element in $sl_2^{\otimes 2}$.}  
\begin{align}\label{csc}
c=\frac{1}{2}h\otimes h+f\otimes e +e\otimes f\in  sl_2^{\otimes 2}.
\end{align}

Now let $m\geq 2$, and let $D\in \mathcal{C}^t_m(l)$. 
Set 
\begin{align}\label{csb}
\begin{split}
b &= \sum_{\sigma\in \mathfrak{S}(3)} (-1)^{|\sigma |} \sigma (h\otimes e\otimes f)
\\
&=h\otimes e \otimes f + e\otimes f\otimes h + f\otimes h\otimes e 
 -h\otimes f\otimes e -f\otimes e\otimes h -e\otimes h \otimes f 
\in sl_2^{\otimes 3},
\end{split}
\end{align}
where $\sigma$ acts by permutation of the tensorands.
Consider a copy of $b$ for each trivalent vertex of $D$, 
where each tensorand of $b$ is associated to one of the half-edges incident to the trivalent vertex, 
following the cyclic ordering.   
Each internal edge (i.e.  each edge between two trivalent vertices)
comprises a pair of half-edges, and we contract the two corresponding copies of $sl_2$ using the symmetric bilinear form 
 $$ \langle - , - \rangle : sl_2\otimes sl_2 \rightarrow \mathbb{Q} $$
defined by $\langle a , b \rangle = Tr(ab)$, that is given by 
$$ \langle h , h \rangle = 2\quad , \quad \langle e , f \rangle =1\quad , \quad \langle h , e \rangle = \langle h , f \rangle = \langle e , e \rangle = \langle f , f \rangle = 0. $$
Fix an arbitrary total order on the set of univalent vertices of $D$; we get in this way an element $\sum x_1\otimes . . . \otimes x_{m+1}$ of $sl_2^{\otimes m+1}$, the $i$th tensorand corresponding to the $i$th univalent vertex of $D$. 
We then define $w_m(D) \in S^{\otimes l}$ by 
\begin{align}\label{formula}
w_m(D)= \sum y_1\otimes \cdots \otimes  y_l,
\end{align}
where $y_j$ is the product of all $x_{i}\in sl_2$ such that the $i$th vertex is labelled by $j$.

It is known that $w_m$ is well-defined, i.e. is invariant under AS and IHX relations; see e.g. \cite[Sec. 6.2]{CDM}. 

\subsection{Computing $w_m$ on trees}\label{sec:compute}

There is another formulation of $w_m$ for tree Jacobi diagrams, which we will use later.
Recall that $ \mathcal{C}^t_m(l)$ is spanned by the trees $T^{(l)}_I$ indexed by sequences $I$ of (possibly repeating) integers in  $\{1,\ldots, l\}$, introduced in Section \ref{sec:jacobi}. 
For convenience, we only give this alternative definition of $w_m$ on the trees $T^{(l)}_I$. 

Recall the element $c\in sl_2^{\otimes 2}$ and $b\in sl_2^{\otimes 3}$ defined in (\ref{csc}) and (\ref{csb}),
respectively.
Let 
$$s\co  sl_2  \to sl_2 ^{\otimes 2}$$ 
be the $\mathbb{Q}$-linear map defined by 
\begin{align*}
s(a)=(\mathrm{ad}\otimes 1)(a\otimes c)= \frac{1}{2} [a,h]\otimes h+[a,f]\otimes e+[a,e]\otimes f
\end{align*}
for $a\in sl_2$,
where $\mathrm{ad}(x\otimes y)=[x,y]$ for $x, y\in sl_2$.
On the basis elements, we have
\begin{align*}
s(e)&= h\otimes e - e\otimes h, 
\\
s(h)&=2(e\otimes f - f\otimes e),
\\
s(f) &  = f\otimes h - h\otimes f.
\end{align*}

Set $\varsigma_2=c\in  sl_2^{\otimes 2}$.  
For $m\geq 3$, set
\begin{align}\label{varsigma}
\varsigma_m=(1^{\otimes m-2}\otimes s)(1^{\otimes m-3} \otimes s)\cdots (1\otimes  s)(c) \in sl_2^{\otimes m}.
\end{align}
\noindent
For example, one can easily check that $\varsigma_3=b$.

\begin{proposition}\label{lem:Xm}
For $m\geq 1$, we have
\begin{align}\label{sch}
 w_m(T^{(m+1)}_{(1,\ldots, m+1)})= \varsigma_{m+1}.
 \end{align}\end{proposition}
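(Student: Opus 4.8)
The statement asserts that the $sl_2$ weight system $w_m$ evaluated on the linear tree $T^{(m+1)}_{(1,\dots,m+1)}$ equals the iterated element $\varsigma_{m+1}$ built by repeated application of the map $s$. The natural approach is induction on $m$. For $m=1$ the tree $T^{(2)}_{(1,2)}$ is a single chord with vertices labeled $1$ and $2$, so $w_1(T^{(2)}_{(1,2)}) = c^{(2)}_{12} = c = \varsigma_2$ by the definition of $w_1$, giving the base case. For the inductive step, I would compare the tree $T^{(m+1)}_{(1,\dots,m+1)}$ with $T^{(m)}_{(1,\dots,m)}$: the former is obtained from the latter by taking the univalent vertex labeled $m$, replacing it with a trivalent vertex carrying a copy of $b$, and attaching two new univalent legs labeled $m$ and $m+1$ following the cyclic ordering. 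Concretely one should think of $T^{(m+1)}$ as $T^{(m)}$ with its last leg ``blown up'' using one extra copy of $b$ and one extra edge contraction.

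\textbf{Key computational lemma.} The heart of the argument is to show that this local operation on the diagram side corresponds exactly, on the $S^{\otimes l}$ side, to applying $1^{\otimes m-1}\otimes s$ to the tensor $w_{m-1}(T^{(m)}_{(1,\dots,m)}) \in sl_2^{\otimes m}$ — that is, to contracting the last tensorand against one leg of $b$ and keeping the other two legs of $b$ as the new last two tensorands. Unwinding the definition of $s$, we have $s(a) = (\mathrm{ad}\otimes 1)(a\otimes c)$, and since $c = \varsigma_2 = w_1$ of a chord, replacing a leg $a$ by a $b$-vertex with two new legs and contracting amounts precisely to $a \mapsto \sum \langle a, b_{(1)}\rangle\, b_{(2)}\otimes b_{(3)}$ where $b = \sum b_{(1)}\otimes b_{(2)}\otimes b_{(3)}$. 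The identity to verify is therefore the purely Lie-theoretic fact that for all $a\in sl_2$,
$$
\sum \langle a, b_{(1)}\rangle\, b_{(2)}\otimes b_{(3)} = s(a),
$$
which can be checked on the basis $\{h,e,f\}$ using the explicit form of $b$ from (\ref{csb}), the bilinear form $\langle-,-\rangle$, and the given formulas for $s(h), s(e), s(f)$ — e.g. pairing $a=e$ against the six terms of $b$ picks out the terms with $f$ in the first slot (since $\langle e,f\rangle = 1$ and $\langle e,h\rangle = \langle e,e\rangle = 0$), yielding $h\otimes e - e\otimes h = s(e)$, and similarly for $h$ and $f$. One must be careful about the cyclic ordering convention at the trivalent vertex and about which leg of $b$ gets contracted versus which become the new legs, but the antisymmetry of $b$ under the relevant transpositions (combined with AS at the vertex) makes the sign bookkeeping consistent; this sign/ordering bookkeeping is where I expect the main friction.

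\textbf{Assembling the induction.} Granting the local lemma, the inductive step reads: $w_m(T^{(m+1)}_{(1,\dots,m+1)}) = (1^{\otimes m-1}\otimes s)\bigl(w_{m-1}(T^{(m)}_{(1,\dots,m)})\bigr) = (1^{\otimes m-1}\otimes s)(\varsigma_m)$, which by the recursive definition (\ref{varsigma}) of $\varsigma_{m+1}$ is exactly $\varsigma_{m+1}$. One subtlety is that the definition of $w_m$ fixes an arbitrary total order on univalent vertices; here the natural order $1 < 2 < \dots < m+1$ should be used throughout so that the ``new'' legs land in the last two tensor slots, matching the position of $s$ in (\ref{varsigma}), and one should note that since the labels $1,\dots,m+1$ are all distinct each $y_j$ in (\ref{formula}) is a single element of $sl_2$, so no symmetrization in $S$ intervenes and the computation takes place entirely in $sl_2^{\otimes m+1}$. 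I would also double-check the $m=2$ case by hand against the stated fact $\varsigma_3 = b$, i.e. verify directly that $w_2$ of the $Y$-graph equals $b$, which serves as a useful sanity check that the contraction conventions have been set up correctly before running the general induction.
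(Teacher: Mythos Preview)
Your proposal is correct and follows essentially the same route as the paper's own proof: induction on $m$ with the same base case, and an inductive step driven by the identity $\sum \langle a, b_{(1)}\rangle\, b_{(2)}\otimes b_{(3)} = s(a)$ checked on the basis $\{h,e,f\}$. The only cosmetic difference is that the paper does not isolate this identity as a separate lemma but instead decomposes $\varsigma_m = X_h\otimes h + X_e\otimes e + X_f\otimes f$ and verifies the three cases inline; your formulation is arguably cleaner.
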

\begin{proof}
This is easily shown by induction on $m\geq 1$.
For $m=1$, we have $w_1 (T^{(2)}_{(1,2)})=c=\varsigma_{2}$.
Now let $m\geq 2$, and let  $X_h, X_e,X_f\in sl_2 ^{\otimes m-2}$ such that
 \begin{align*}
\varsigma_{m} &=  X_h\otimes h +X_e\otimes e+X_f\otimes f
\\
&=w_{m-1}(T^{(m)}_{(1,\ldots, m)}).
 \end{align*}

Then we have
\begin{align*}
\varsigma_{m+1}&=(1^{\otimes m-1}\otimes s)(\varsigma_{m-1} )
\\
&= (1^{\otimes m-1}\otimes s)(X_h\otimes h +X_e\otimes e+X_f\otimes f)
\\
&=X_h\otimes 2 (e\otimes f-f\otimes e ) +X_e\otimes (h\otimes e -e\otimes f)+X_f\otimes (f\otimes h-h\otimes f)
\\
&=X_h\otimes  (\sum \langle h, b_1\rangle b_2\otimes b_3) +X_e\otimes (\sum \langle e, b'_1\rangle b'_2\otimes b'_3)+X_f\otimes  (\sum \langle f, b''_1\rangle b''_2\otimes b''_3)
\\
&=w_m(T^{(m+1)}_{(1,\ldots, m+1)}),
\end{align*}
where $\sum b_1\otimes b_2\otimes b_3=\sum b'_1\otimes b'_2\otimes b'_3=\sum b''_1\otimes b''_2\otimes b''_3=b$.

Hence we have the assertion.
\end{proof}

For an arbitrary sequence $I=(i_1,\ldots, i_{m+1})$ of indices in $\{1,\ldots, l\}$, set
\begin{align*}
\varsigma^{(l)}_{I}=(\varsigma_{m+1})^{(l)}_{i_1,\ldots, i_{m+1}},
\end{align*}
that is,  if we write formally $\varsigma_{m+1}=\sum x_1\otimes \cdots \otimes x_{m+1}$, the $j$th tensorand of $\varsigma^{(l)}_{I}$ is the product of 
 of all $x_{p}\in U$ such that $i_p=j$.
Then  by Proposition \ref{lem:Xm} and the definition of $w_m$, it  immediately follows that
\begin{align}\label{formula2}
w_m(T^{(l)}_I)= \varsigma^{(l)}_{I}\in S^{\otimes l}. 
\end{align}

\section{Milnor map and the universal $sl_2$ invariant}\label{5}

In this section we give the  main results of this paper, which relate Milnor invariants to the universal $sl_2$ invariant
via the $sl_2$ weight system $W$. 

\subsection{The quantized  enveloping algebra $U_{\hbar}$ and formal power series $S[[\hbar]]$ over the symmetric algebra}\label{5.1}

The symmetric algebra $S$ of $sl_2$ has a graded structure $S=\oplus_{m\geq 0} S_m$, where $S_m$ is the $\mathbb{Q}$-subspace spanned by elements of homogeneous degree $m$. 
Likewise, its $l$-fold tensor product $S^{\otimes l}=\oplus_{m\geq 0} (S^{\otimes l})_m$ is graded, with  
\begin{align*}
(S^{\otimes l})_m=\bigoplus_{\substack{m_1+\cdots+m_l=m \\ m_1,\ldots,m_l\geq 0.}} S_{m_1}\otimes \cdots \otimes S_{m_l}.
\end{align*}
Consider the $\mathbb{Q}$-subspace $\langle sl_2\rangle ^{(l)}_{m}$ of $(S^{\otimes l})_m$ defined by
\begin{align*}
\langle sl_2\rangle ^{(l)}_{m}=\bigoplus_{\substack{m_1+\cdots+m_l=m \\ 0\leq m_1,\ldots,m_l\leq 1}} S_{m_1}\otimes \cdots \otimes S_{m_l}.
\end{align*}
Roughly speaking, $\langle sl_2\rangle ^{(l)}_{m}$ is spanned by tensors in $S^{\otimes l}$ with exactly $m$ nontrivial tensorands, each of which being of degree one. 
For example, the tensor $\varsigma_m$ defined in (\ref{varsigma}) is an element of $\langle sl_2\rangle ^{(l)}_{m}$. 

By the definition of $W$ given in Sections \ref{sec:weight} and \ref{sec:compute}, we immediately have the following.
\begin{lemma}\label{wcc}
For $m\geq 1,$  we have 
$$W(\mathcal{C}^t_m(l))\subset (S^{\otimes l})_{m+1}\hbar^m \quad \textrm{and} \quad 
W(\mathcal{C}^h_m(l))\subset \langle sl_2\rangle ^{(l)}_{m+1}\hbar^m. $$
\end{lemma}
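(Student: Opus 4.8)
The plan is to unwind the definition of $W$ on $\mathcal{B}^t(l)$ and $\mathcal{B}^h(l)$ and trace through the degrees. Recall from Section \ref{sec:weight} that $W$ is the $\mathbb{Q}$-algebra homomorphism determined by $W(D)=w_m(D)\hbar^m$ for $D\in\mathcal{C}^t_m(l)$, and that $\mathcal{B}^t(l)$ is the polynomial algebra on $\mathcal{C}^t(l)=\bigoplus_m\mathcal{C}^t_m(l)$ (with $\mathcal{B}^h(l)$ the polynomial algebra on $\mathcal{C}^h(l)$). Since $W$ is multiplicative and the statement is about a single graded piece $\mathcal{C}^t_m(l)$ (resp. $\mathcal{C}^h_m(l)$), it suffices to prove the two inclusions for the generators $w_m(D)$ with $D$ a tree of degree $m$; there is nothing to combine across products.

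First I would treat the $\mathcal{C}^t_m(l)$ case. By Section \ref{sec:jacobi}, $\mathcal{C}^t_m(l)$ is spanned by the trees $T^{(l)}_I$ with $I=(i_1,\dots,i_{m+1})$ a sequence of $m+1$ (possibly repeating) indices, and by \eqref{formula2} we have $w_m(T^{(l)}_I)=\varsigma^{(l)}_I$. Now $\varsigma_{m+1}\in sl_2^{\otimes(m+1)}$ is a sum of elementary tensors $x_1\otimes\cdots\otimes x_{m+1}$ with each $x_p\in sl_2$, i.e. of degree one in $S$; forming $\varsigma^{(l)}_I$ multiplies together, in the $j$th tensorand, exactly those $x_p$ with $i_p=j$. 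Hence the total $S$-degree of every elementary tensor appearing in $\varsigma^{(l)}_I$ is $m+1$, which gives $w_m(T^{(l)}_I)\in(S^{\otimes l})_{m+1}$, and the $\hbar$-power $m$ comes directly from the definition $W(D)=w_m(D)\hbar^m$. (For $m=1$ this is just $w_1(D_{ij})=c^{(l)}_{ij}\in S_2^{\otimes l}$, already recorded in Section \ref{sec:weight}.) This settles $W(\mathcal{C}^t_m(l))\subset(S^{\otimes l})_{m+1}\hbar^m$.

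For the second inclusion, I would restrict to trees with \emph{distinct} labels, which span $\mathcal{C}^h_m(l)$. If $I=(i_1,\dots,i_{m+1})$ has no repeated index, then in forming $\varsigma^{(l)}_I$ each tensorand receives at most one factor $x_p$: the $m+1$ tensorands indexed by $i_1,\dots,i_{m+1}$ get a single degree-one element of $sl_2$, and all remaining $l-(m+1)$ tensorands are $1$. Thus every elementary tensor in $\varsigma^{(l)}_I$ has exactly $m+1$ nontrivial tensorands, each of degree one, which is precisely membership in $\langle sl_2\rangle^{(l)}_{m+1}$; multiplying by $\hbar^m$ gives $w_m(T^{(l)}_I)\hbar^m\in\langle sl_2\rangle^{(l)}_{m+1}\hbar^m$. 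Since $W$ is an algebra homomorphism and $\mathcal{B}^h(l)$ is generated by such trees, the inclusion $W(\mathcal{C}^h_m(l))\subset\langle sl_2\rangle^{(l)}_{m+1}\hbar^m$ follows. There is no real obstacle here — the only point requiring a moment's care is the bookkeeping that $\varsigma_{m+1}$ is genuinely a sum of tensors of degree-one elements of $sl_2$ (not something of higher degree), which is exactly what the recursion \eqref{varsigma} via the map $s\co sl_2\to sl_2^{\otimes 2}$ and the base case $\varsigma_2=c\in sl_2^{\otimes 2}$ guarantee, as noted right after \eqref{varsigma}.
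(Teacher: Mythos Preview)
Your proof is correct and is essentially the same as the paper's (implicit) argument: the paper simply says the lemma follows ``immediately'' from the definition of $W$ in Sections~\ref{sec:weight} and~\ref{sec:compute}, and you have just made that explicit by noting that a degree-$m$ tree has $m+1$ univalent vertices, each contributing a degree-one factor in $S$, so $w_m(D)\in(S^{\otimes l})_{m+1}$, with the $\mathcal{C}^h$ refinement coming from the labels being distinct. One minor remark: the appeal to $W$ being an algebra homomorphism and to $\mathcal{B}^h(l)$ being generated by trees is unnecessary, since the statement concerns only the linear span $\mathcal{C}^h_m(l)$ and you have already checked it on a spanning set.
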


In what follows, we will identify $U_{\hbar}^{\hat \otimes l}$ and $U^{\otimes l}[[\hbar]]$ as $\mathbb{Q}[[\hbar]]$-modules via the isomorphism
\begin{align*}
\rho\co U_{\hbar}^{\hat \otimes l}\to  U^{\otimes l}[[\hbar]]
\end{align*}
defined by
\begin{align*}
&\rho \left(\sum_{s_{1},\ldots, s_{l},t_{1},\ldots, t_{l},u_{1},\ldots, u_{l}\geq 0} a_{s_{1},\ldots, s_{l},t_{1},\ldots, t_{l},u_{1},\ldots, u_{l}} F^{s_{1}}H^{t_{1}}E^{u_{1}}\otimes \cdots \otimes F^{s_{l}}H^{t_{l}}E^{u_{l}}   \right)
\\
&= \sum_{s_{1},\ldots, s_{l},t_{1},\ldots, t_{l},u_{1},\ldots, u_{l}\geq 0} a_{s_{1},\ldots, s_{l},t_{1},\ldots, t_{l},u_{1},\ldots, u_{l}}  f^{s_{1}}h^{t_{1}}e^{u_{1}}\otimes \cdots \otimes f^{s_{l}}h^{t_{l}}e^{u_{l}},
\end{align*}
for $ a_{s_{1},\ldots, s_{l},t_{1},\ldots, t_{l},u_{1},\ldots, u_{l}} \in \mathbb{Q}[[\hbar]].$
We also identify  $U_{\hbar}^{\hat \otimes l}$ and $S^{\otimes l}[[\hbar]]$ as $\mathbb{Q}$-modules similarly.

\subsection{Main results}\label{subseq}
We can now give the precise statements of our main results. 

Set
\begin{align*}
J^t&:=\pi^t\circ J\co SL(l) \to  \prod_{m\geq 1}(S^{\otimes l})_{m+1} \hbar^m,
\end{align*}
where 
\begin{align*}
\pi^t\co U_{\hbar}^{\hat \otimes l} &\to   \prod_{m\geq 1}(S^{\otimes l})_{m+1} \hbar^m
\end{align*}
denotes the projection as $\mathbb{Q}$-modules, that is,
\begin{align*}
\pi^t&\left( \sum_{m\geq 0}\sum_{i\geq 0}\left(\sum_{\sum_{i=1}^l s_i+t_i+u_i=m}
b^{(i)}_{s_{1},\ldots, s_{l},t_{1},\ldots, t_{l},u_{1},\ldots, u_{l}} F^{s_{1}}H^{t_{1}}E^{u_{1}}\otimes \cdots \otimes F^{s_{l}}H^{t_{l}}E^{u_{l}}\right)\hbar^{i}\right)  
\\
&=   \sum_{m\geq 1}\left(\sum_{\sum_{i=1}^l s_i+t_i+u_i=m+1}
 b^{(m)}_{s_{1},\ldots, s_{l},t_{1},\ldots, t_{l},u_{1},\ldots, u_{l}}  f^{s_{1}}h^{t_{1}}e^{u_{1}}\otimes \cdots \otimes f^{s_{l}}h^{t_{l}}e^{u_{l}}\right)\hbar^{m},
\end{align*}
for $b^{(i)}_{s_{1},\ldots, s_{l},t_{1},\ldots, t_{l},u_{1},\ldots, u_{l}}\in \mathbb{Q}$.\\

The first main result in this paper is as follows.
\begin{theorem}\label{sth2}
Let $m\geq 1$.  If $L\in SL_m(l)$, then we have
$$
J^t(L)\equiv (W\circ \mu_{m}) (L) \oh{m+1}.  
$$
\end{theorem}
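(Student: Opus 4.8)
The plan is to follow the two-step strategy announced in the introduction: first establish the link-homotopy version (Theorem~\ref{sth2h}), and then bootstrap to the general case via cabling/clasper arguments. Since Theorem~\ref{sth2h} is stated but not yet proved in the excerpt, the natural route is to first prove a ``link-homotopy invariance''-type statement for the map $J^h$ (Proposition~\ref{s241}), reducing the computation of $J^h(L)$ to the explicit pure-braid representative $b^L_m b^L_{m+1}\cdots b^L_{l-1}$ furnished by Lemma~\ref{lem:braidlh}. On that representative, one computes $J$ directly: each generator $A_{i,j}$ contributes a factor that is, modulo higher $\hbar$-order, $1 + c^{(l)}_{ij}\hbar$ (by Proposition~\ref{sc}), and one tracks how the nested commutators $B_J$ in the lower central series translate, under $J$, into nested ``$\mathrm{ad}$-type'' operations on the $c$'s. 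The key identity to isolate is that the commutator of group-like-modulo-$\hbar$ elements corresponds, at leading order in $\hbar$, to the Lie bracket, and that iterating this produces precisely the tensor $\varsigma_{m+1}$ of \eqref{varsigma} — which by Proposition~\ref{lem:Xm} and \eqref{formula2} equals $w_m(T^{(l)}_I)$, i.e. the image under $W$ of the Milnor tree $T_I$. Matching indices across all $J\in\mathcal{I}_{m+1}$ then gives $J^h(L)\equiv (W\circ\mu^h_m)(L)\oh{m+1}$.

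To pass from the link-homotopy statement to Theorem~\ref{sth2}, I would use the standard cabling trick for Milnor invariants: a string link $L\in SL_m(l)$ has the property that suitable parallel (or ``infinitesimal'') cablings of its components turn length-$m$ Milnor $\mu$-invariants with repetitions into Milnor link-homotopy invariants of an associated link on more strands, where repeated indices are replaced by distinct ``parallel'' indices. One checks that $J$ behaves compatibly under such cabling — this is where the coproduct structure of $U_\hbar$ enters, since cabling a component corresponds to applying $\Delta_\hbar$ in the relevant tensor factor — and that the $sl_2$ weight system $W$ intertwines the corresponding cabling operation on Jacobi diagrams with multiplication in $S$. Applying the already-established link-homotopy case to the cabled link and then specializing (setting the parallel copies equal, i.e. multiplying tensorands) recovers the full statement with repeated indices, using Lemma~\ref{wcc} to land in $\prod_{m\geq 1}(S^{\otimes l})_{m+1}\hbar^m$.

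The main obstacle I anticipate is making the ``link-homotopy invariance of $J^h$'' precise and honest. The universal $sl_2$ invariant $J$ itself is of course \emph{not} a link-homotopy invariant, so the claim is that its projection $\pi^h\circ J$ onto $\bigoplus_{m=1}^{l-1}\langle sl_2\rangle^{(l)}_{m+1}\hbar^m$ is. This requires showing that a self-crossing change alters $J$ only by terms that either have a tensorand of degree $\geq 2$ (hence die under $\pi^h$) or have $\hbar$-order too high to matter at the relevant degree. Concretely one must analyze $J$ of a string link containing a local ``self-clasp'' and show the difference before/after the crossing change lies in the appropriate ideal; this is presumably where clasper theory (Section~\ref{8}) does the heavy lifting, expressing the effect of a crossing change in terms of claspers whose leaves all meet a single component, and then computing the $sl_2$ invariant of such local pieces. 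A secondary technical point is bookkeeping the $\hbar$-degrees carefully throughout: one must verify that in $J(b^L_m\cdots b^L_{l-1})$ the leading contribution really sits in $\hbar^m$-degree with coefficient in $(S^{\otimes l})_{m+1}$ and that all corrections from non-group-like parts of the $R$-matrix and from the $D=q^{\frac14 H\otimes H}$ factors contribute only at strictly higher order or get killed by the relevant projection.
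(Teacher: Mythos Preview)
Your overall strategy matches the paper's: prove the link-homotopy version first (via Proposition~\ref{s241} and the explicit computation on the pure-braid commutators $B_J$, which is Proposition~\ref{sp}), then cable to get the general case. Your outline of the link-homotopy step is accurate, and you correctly identify that clasper theory is what makes Proposition~\ref{s241} go through.

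The gap is in the cabling step. ``Specializing back'' by multiplying parallel tensorands does not directly recover $\pi^t_m(J(L))$: what you obtain is $\nabla^{(p)}\circ\pi^h_m\circ\Delta_\hbar^{(p)}(J(L))$, and this also receives contributions from the off-diagonal pieces $\pi^t_{i,j}(J(L))$ with $j\le i-2\le m$, not just from $\pi^t_m(J(L))$. The paper instead uses that $\pi^h_m\circ\bar\Delta^{(p)}$ is \emph{injective} on $(S^{\otimes l})_{m+1}\hbar^m$ (Lemma~\ref{sl3}), so it suffices to show both $\pi^t_m(J(L))$ and $(W\circ\mu_m)(L)$ agree after applying this map (Lemmas~\ref{scom} and~\ref{spro1}). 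But for Lemma~\ref{spro1} to hold one must first know that $\pi^h_m\circ\Delta_\hbar^{(p)}(J(L))$ sees only the $(m{+}1,m)$-component of $J(L)$, i.e.\ that
\[
J(L)\in 1+\bigoplus_{1\le i\le j\le m}(S^{\otimes l})_i\hbar^j+(S^{\otimes l})_{m+1}\hbar^m+\hbar^{m+1}U_\hbar^{\hat\otimes l}
\]
(equation~\eqref{eq:JT}). Establishing this requires ruling out the terms $\pi^t_{i,j}(J(L))$ for $j\le i-2\le m$, which is a separate technical result (Lemma~\ref{sl4}) also proved via claspers --- specifically, by showing that surgery along a repeated tree clasper contributes only to $\prod_j(S^{\otimes l})_{\mathrm{supp}\le j}\hbar^j$. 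This second clasper-theoretic input, distinct from Proposition~\ref{s241}, is what you have not anticipated, and without it neither your multiplication approach nor the paper's injectivity approach closes.
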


\begin{example}\label{ex:mu1}
Let $L$ be a string link with nonzero linking matrix $(m_{ij})_{1\le i,j \le l}$. 
Then the $i$th longitude $l^1_i$ in $\F/\Gamma_{2}\F$, defined in Section \ref{sec:milnormap}, reads 
$l^1_i = \sum_{j=1}^l  m_{ij} \alpha_j$ ($1\le i\le l$), 
so that the degree $1$ Milnor map of $L$ is given by 
\begin{align*}
 \mu_1(L) & = \sum_{i=1}^l \alpha_i\otimes l^1_i\\
  & = \sum_{1\le i,j \le l} m_{ij} \alpha_i\otimes \alpha_j\\
  & = \sum_{1\le i<j \le l} m_{ij} (\alpha_i\otimes \alpha_j + \alpha_j\otimes \alpha_i)+ \sum_{1\leq i \leq l} m_{ii} (\alpha_i\otimes  \alpha_i)\\
  & = \sum_{1\le i<j \le l} m_{ij}. D_{\alpha_i,\alpha_j} +  \sum_{1\leq i \leq l} m_{ii}. \frac{1}{2} D_{\alpha_i,\alpha_i},  
\end{align*}
where $D_{\alpha_i,\alpha_j}$ denotes a single chord with vertices labelled $\alpha_i$ and $\alpha_j$, and where the last equality uses isomorphism (\ref{isomu}), see Example \ref{ex:isom}. 
Applying the $sl_2$ weight system $W$ then yields 
 $$ (W\circ \mu_1)(L) = \sum_{1\leq i<j \leq l} m_{ij}c^{(l)}_{ij}+\frac{1}{2} \sum_{1\leq i \leq l} m_{ii}c^{(l)}_{ii}, $$
as predicted by Proposition \ref{sc}.  
\end{example} 

Since Milnor maps are concordance invariants, we obtain the following topological property for $J^t$ as an immediate consequence of Theorem \ref{sth2}. 
\begin{corollary}\label{cor:main}
Let $L, L' \in SL_m(l)$  be two concordant string links.
Then we have
 \begin{align*}
  J^t(L') \equiv J^t(L) \oh{m+1}. 
  \end{align*}
In particular, if $L$ is concordant to the trivial string link, then $J^t(L)$ is trivial.  
\end{corollary}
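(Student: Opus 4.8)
\textbf{Proof proposal for Corollary \ref{cor:main}.}

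The plan is to derive this statement directly from Theorem \ref{sth2}, using only the fact --- recalled in Section \ref{milnornb} --- that Milnor maps are concordance invariants. First I would observe that, since $L$ and $L'$ are both assumed to lie in $SL_m(l)$ --- that is, both have vanishing Milnor invariants of length $\le m$ --- Theorem \ref{sth2} applies to each of them, yielding
\begin{align*}
J^t(L) &\equiv (W\circ \mu_{m})(L) \oh{m+1}, \\
J^t(L') &\equiv (W\circ \mu_{m})(L') \oh{m+1}.
\end{align*}
Next I would invoke concordance invariance of the degree $m$ Milnor map: since $L$ and $L'$ are concordant, we have $\mu_{m}(L) = \mu_{m}(L')$ as elements of $\mathcal{C}^t_m(l)$ (equivalently of $D_m(H)$ under the identification (\ref{isomu})). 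Applying the $sl_2$ weight system $W$ then gives $(W\circ\mu_m)(L) = (W\circ\mu_m)(L')$, and combining this with the two congruences above yields $J^t(L')\equiv J^t(L) \oh{m+1}$, which is the desired conclusion.

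For the final sentence, I would take $L'=\mathbf{1}_l$ the trivial string link. The trivial string link clearly has all Milnor invariants zero, so in particular $\mathbf{1}_l\in SL_m(l)$ for every $m$, and $J^t(\mathbf{1}_l)$ is trivial (indeed $J(\mathbf{1}_l)=1$ so $J^t(\mathbf{1}_l)=0$ in $\prod_{m\ge 1}(S^{\otimes l})_{m+1}\hbar^m$). If $L$ is concordant to $\mathbf{1}_l$, then $L$ itself has vanishing Milnor invariants by their concordance invariance, so $L\in SL_m(l)$ for all $m\ge 1$; applying the first part of the corollary for every $m$ gives $J^t(L)\equiv 0 \oh{m+1}$ for all $m$, whence $J^t(L)=0$.

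There is essentially no obstacle here beyond correctly matching hypotheses: the only subtlety is that Theorem \ref{sth2} requires membership in $SL_m(l)$, which must be checked for both links, and for the sliceness statement one must note that concordance to the trivial string link forces all Milnor invariants to vanish (not merely those of some bounded length), so that the conclusion $J^t(L)=0$ can be obtained $\hbar$-adically by letting $m\to\infty$. Everything else is a formal consequence of Theorem \ref{sth2} and the concordance invariance of Milnor maps.
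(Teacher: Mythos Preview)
Your proposal is correct and matches the paper's approach exactly: the paper presents this corollary as an immediate consequence of Theorem \ref{sth2} together with the concordance invariance of Milnor maps, and your argument spells out precisely this deduction, including the $\hbar$-adic passage to the limit for the slice case.
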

\noindent 

Theorem \ref{sth2} is proved in Section \ref{7}. 
The proof relies on the fact that Milnor concordance invariants are related to Milnor link-homotopy  invariants via some cabling operation, so that Theorem \ref{sth2h} below is actually used as a tool for proving Theorem \ref{sth2}. 

In order to state the second main result in this paper, set 
\begin{align*}
J^h&:=\pi^h\circ J\co SL(l) \to \bigoplus_{m=1}^{l-1}\langle sl_2 \rangle^{(l)}_{m+1}{\hbar}^m,  
\end{align*}
where 
$\pi^h\co U_{\hbar}^{\hat \otimes l} \to \bigoplus_{m=1}^{l-1}\langle sl_2 \rangle _{m+1}^{(l)}\hbar^m$ 
is the projection as $\mathbb{Q}$-modules. 

We have the following. 
\begin{theorem}\label{sth2h}
Let $m\geq 1$.  
If $L\in SL^h_m(l)$,  then we have 
$$
J^h(L)\equiv  (W\circ \mu^h_{m}) (L) \oh{m+1}, 
$$
where $\mu^h_{m}$ is the link-homotopy reduction of the Milnor map $\mu_{m}$, defined in Section \ref{sec:HM}.  
\end{theorem}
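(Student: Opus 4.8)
\textbf{Proof plan for Theorem \ref{sth2h}.}

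The plan is to prove the statement by induction on $m$, using the explicit link-homotopy representative from Lemma \ref{lem:braidlh}. For the base case $m=1$, one observes that $SL_1^h(l)=SL(l)$ and that $\mu_1^h$ is just the reduction of $\mu_1$ obtained by killing diagrams with repeated labels; the content of Proposition \ref{sc} then gives $\coef(J(L))=\sum_{i<j}m_{ij}c_{ij}^{(l)}+\frac12\sum_i m_{ii}c_{ii}^{(l)}$, and applying $\pi^h$ kills the self-linking terms $c_{ii}^{(l)}$ (which do not land in $\langle sl_2\rangle^{(l)}_2$), leaving exactly $(W\circ\mu_1^h)(L)$ by the computation in the Example following Theorem \ref{sth2}. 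So the base case is essentially Proposition \ref{sc} composed with $\pi^h$.

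For the inductive step, the first key ingredient I would establish is a ``link-homotopy invariance''-type property for $J^h$ (this is the Proposition \ref{s241} alluded to in the introduction): if $L$ and $L'$ are link-homotopic, then $J^h(L)\equiv J^h(L')\pmod{\hbar^{l}}$, or at least modulo the relevant power of $\hbar$. The point is that a self-crossing change alters $J$ by a controlled amount, and after projecting via $\pi^h$ — which only retains tensors with each tensorand of degree $\le 1$ — the difference dies in the target range $\bigoplus_{m=1}^{l-1}\langle sl_2\rangle^{(l)}_{m+1}\hbar^m$. Concretely, a self-crossing change multiplies one tensorand by $R^{\pm1}$-type factors supported on a single component, producing terms of degree $\ge 2$ in that tensorand, hence killed by $\pi^h$. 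Granting this, it suffices to prove the theorem for the particular representative $L = b^L_m b^L_{m+1}\cdots b^L_{l-1}$ of Lemma \ref{lem:braidlh}, where $b^L_i=\prod_{J\in\mathcal{I}_{i+1}}(B_J)^{\mu_J(L)}$ (using the normalization valid when lower invariants vanish, so that $\mu_J(b^L_i)=\mu_J(L)$ for the leading $i=m$ factor).

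The second key ingredient is a multiplicativity/additivity statement: for string links in the Milnor filtration, $J^h$ is ``additive modulo higher order,'' i.e. $J^h(L\cdot L')\equiv J^h(L)+J^h(L')\pmod{\text{higher }\hbar}$ when $L,L'\in SL_1^h(l)$, mirroring the additivity of Milnor invariants. This reduces the computation to evaluating $J^h$ on each generator $B_J$ with $J\in\mathcal{I}_{m+1}$: I would show
\[
J^h(B_J)\equiv W(T^{(l)}_J)\pmod{\hbar^{m+1}},
\]
so that $J^h(b^L_m)\equiv\sum_{J\in\mathcal{I}_{m+1}}\mu_J(L)\,W(T^{(l)}_J)=(W\circ\mu^h_m)(L)$, which is precisely the $\hbar^m$-term, the higher factors $b^L_{i}$ ($i>m$) contributing only in degrees $\ge m+2$. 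Here $B_J=[[\dots[A_{j_1,j_2},A_{j_2,j_3}],\dots],A_{j_{m-1},j_m}]$, and I would compute $J(A_{i,j})$ from its braid word in the $\sigma_k$'s — each $A_{i,j}$ contributes a commutator-type element whose leading $\hbar$-term is built from the $c^{(l)}_{ij}$ — and then iterate: the $\hbar$-adic leading term of $J$ applied to an iterated group commutator of $A$'s is the iterated Lie bracket of the corresponding $c$'s inside $U_\hbar^{\hat\otimes l}$, which after $\rho$ and $\pi^h$ matches $\varsigma^{(l)}_J = w_m(T^{(l)}_J)$ via Proposition \ref{lem:Xm} and formula (\ref{formula2}). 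The map $s\co sl_2\to sl_2^{\otimes2}$ entering the definition of $\varsigma_m$ is exactly the infinitesimal action recording how conjugation by $A_{i,j}$ (i.e. by an $R$-matrix contribution) acts on a tensorand, so the bracket structure will match on the nose.

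\textbf{Main obstacle.} The hardest part will be the explicit $\hbar$-adic computation showing $J^h(B_J)\equiv W(T^{(l)}_J)\pmod{\hbar^{m+1}}$ for a length-$m$ iterated commutator of pure-braid generators: one must track that the commutator bracket in the group $P(l)$ transforms, at the level of leading $\hbar$-terms in $U_\hbar^{\hat\otimes l}$, into the correct Lie bracket in $sl_2$ (built via $\mathrm{ad}$ and the element $c$), with exactly the right coefficient $1$ and no stray lower-degree or wrong-component contributions surviving $\pi^h$. This requires a careful bookkeeping of $K^{\pm1}$-factors (via $D(1\otimes x)=(K^{|x|}\otimes x)D$) and of the STU-type identities implicit in the definition of $s$; the combinatorial heart is matching the recursion (\ref{varsigma}) for $\varsigma_m$ against the recursion for nested commutators of $A_{i,j}$'s, which is where claspers (Section \ref{8}) presumably streamline the argument. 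The use of $\pi^h$ rather than $\pi^t$ is what makes this tractable: it discards precisely the tensorands of degree $\ge 2$ where the commutator expansion would otherwise generate uncontrolled correction terms.
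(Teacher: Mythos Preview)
Your overall architecture matches the paper's: reduce via link-homotopy invariance of $J^h$ (Proposition \ref{s241}) to the explicit representative $b^L_m\cdots b^L_{l-1}$ of Lemma \ref{lem:braidlh}, then compute $J(B_J)$ for each generator. Your outline of the $B_J$ computation is correct and is exactly Proposition \ref{sp}: since $J$ is multiplicative and $[1+x\hbar^{a}+\cdots,\,1+y\hbar+\cdots]\equiv 1+[x,y]\hbar^{a+1}\oh{a+2}$, the iterated group commutator defining $B_J$ yields, at leading order, the iterated bracket $[\varsigma^{(l)}_{j_1\ldots j_m},c^{(l)}_{j_m,j_{m+1}}]=\varsigma^{(l)}_J$, matching the recursion (\ref{varsigma}) via $s$. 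This is a short, clean induction in the paper; it is \emph{not} the hard step.

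The genuine gap is your argument for link-homotopy invariance. A self-crossing change does not simply ``multiply one tensorand by $R^{\pm1}$-type factors.'' At a self-crossing of component $i$, the two labels $\alpha_n$ and $\beta_n$ enter the word $J(L)_i$ at \emph{separated} positions along the traversal of $L_i$, so the difference $J(L)_i-J(L')_i$ has the form $\sum_n A\,\alpha_n\,B\,\beta_n\,C - \sum_n A\,\beta^{-}_n\,B\,\alpha^{-}_n\,C$ with $A,B,C$ fixed words in $U_\hbar$. After rewriting in the PBW basis (which is what $\rho$ and then $\pi^h$ require), commutators such as $EF-FE=H+O(\hbar)$ lower the filtration degree, so there is no a priori reason the $i$th tensorand stays in $S_{\ge 2}$. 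Your one-line argument does not control this.

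The paper handles exactly this point in Section \ref{8} using claspers. The key is not a local analysis of a single crossing change but a global decomposition (Lemma \ref{slide}, Corollary \ref{cor:slide}): any link-homotopy can be realized, modulo $\hbar^{m+1}$, by stacking factors $\mathbf{1}_{R_j}$ where each $R_j$ is an \emph{overpassing repeated} tree clasper. One then shows (Lemma \ref{lem:repeat}) that each such factor lies in $1+\prod_{j\ge 1}(S^{\otimes l})_{\mathrm{supp}\le j}\hbar^j$; since an element of $\langle sl_2\rangle^{(l)}_{m+1}\hbar^m$ has $\mathrm{supp}=m+1>m$, $\pi^h$ annihilates these contributions. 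This is the substantive part of the proof, and it is where the clasper machinery is essential --- you placed the claspers at the wrong step. Your ``main obstacle'' paragraph should be about Proposition \ref{s241}, not about the $B_J$ computation.
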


Theorem \ref{sth2h} is equivalent to the following  theorem, formulated in terms of Milnor numbers and the tensors $\varsigma^{(l)}_I$ defined in Section \ref{sec:compute}.  
\begin{theorem}\label{sth1}
For $m\geq 1$, if  $L\in SL_m^h(l)$, then
we have 
$$
J^h(L)\equiv \left( \sum_{I\in \mathcal{I}_{m+1}} \mu_I(L)\varsigma^{(l)}_I\right)\hbar^m \oh{m+1},    
$$
where the sum runs over the set $\mathcal{I}_{m+1}$ defined in Section 2.
\end{theorem}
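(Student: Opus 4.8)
The plan is to prove Theorem \ref{sth1} directly, and then observe that Theorem \ref{sth2h} follows by unwinding the identifications. The starting point is Lemma \ref{lem:braidlh}: if $L\in SL^h_m(l)$, then $L$ is link-homotopic to $b^L_m b^L_{m+1}\cdots b^L_{l-1}$, where each $b^L_i$ is a product of commutator pure braids $B_J^{(l)}$ with $J\in \mathcal{I}_{i+1}$, raised to Milnor-invariant exponents. Since the universal $sl_2$ invariant $J$ is a multiplicative invariant on the monoid $SL(l)$ (under the stacking product) — but \emph{not} a link-homotopy invariant on the nose — the first key step is a ``link-homotopy invariance modulo $\hbar^{m+1}$'' statement for $J^h$: namely that if $L$ and $L'$ are link-homotopic, then $J^h(L)\equiv J^h(L')\oh{m+1}$ in the appropriate range. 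This is precisely the role played by the forthcoming Proposition \ref{s241} mentioned in the introduction; I would invoke it here. Granting this, it suffices to compute $J^h$ modulo $\hbar^{m+1}$ on the explicit braid representative $b^L_m\cdots b^L_{l-1}$.

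Next I would set up the multiplicativity bookkeeping. Because $J(L_1\cdot L_2)=J(L_1)J(L_2)$ in $U_\hbar^{\hat\otimes l}$, and because each $b^L_i\in SL^h_i(l)$ contributes only in degrees $\ge i$ in the $\hbar$-filtration (this is the content of the filtration $SL^h(l)\supset SL^h_1(l)\supset\cdots$ combined with the lower-central-series depth of the $B_J$), the product $b^L_m\cdots b^L_{l-1}$ has $J^h$ equal, modulo $\hbar^{m+1}$, to the sum of the individual $\coef$-type leading contributions of $b^L_m$ only; the factors $b^L_{m+1},\ldots$ are killed. Moreover $b^L_m = \prod_{J\in\mathcal{I}_{m+1}}(B_J^{(l)})^{\mu_J(L)}$ by the definition of $SL^h_m(l)$ (here $\mu_J(b^L_m)=\mu_J(L)$ since the lower-order $b^L_i$ are absent), and again by multiplicativity and the fact that each $B_J^{(l)}$ lies deep enough in the Milnor filtration, we get
\begin{align*}
J^h(L)\equiv \sum_{J\in\mathcal{I}_{m+1}} \mu_J(L)\,J^h\!\left(B_J^{(l)}\right)\oh{m+1}.
\end{align*}
So everything reduces to a single local computation: the leading term of $J$ on the commutator pure braid $B_J^{(l)}=[[\cdots[A_{j_1,j_2},A_{j_2,j_3}],\ldots],A_{j_m,j_{m+1}}]$.

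The heart of the proof is therefore this computation, which I expect to be the main obstacle. The idea is to compute $J$ of a single crossing-generator $A_{i,j}$ from the $R$-matrix formulas \eqref{rm1}–\eqref{rm2} — recall $J(A)\equiv 1+c\hbar\oh{2}$ from the worked example \eqref{univc}, placed in tensor slots $i,j$ — and then to evaluate the group commutator. The key algebraic mechanism is that for elements $u,v\in U_\hbar^{\hat\otimes l}$ with $u\equiv 1+x\hbar$ and $v\equiv 1+y\hbar$, the commutator $uvu^{-1}v^{-1}\equiv 1+[x,y]\hbar^2$; iterating, an $(m-1)$-fold commutator of such elements has leading term $1+(\text{nested Lie bracket})\hbar^m$. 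Concretely one shows by induction that $J\big([[\cdots[A_{j_1,j_2},A_{j_2,j_3}],\ldots],A_{j_m,j_{m+1}}]\big)\equiv 1 + (\text{bracketing of } c\text{'s in slots }j_1,\ldots,j_{m+1})\,\hbar^m\oh{m+1}$, where the bracketing is taken with respect to the adjoint action on the shared tensor slots — and this nested bracket of copies of $c$ is exactly $\varsigma^{(l)}_{J}$ by the recursive definition \eqref{varsigma} of $\varsigma_{m+1}$ via the map $s(a)=(\mathrm{ad}\otimes 1)(a\otimes c)$ (Proposition \ref{lem:Xm} and formula \eqref{formula2}). The delicate points will be (i) keeping track of which tensor slot the adjoint action falls on when $A_{j_k,j_{k+1}}$ and $A_{j_{k+1},j_{k+2}}$ share the index $j_{k+1}$, matching the ``comb'' shape of $T_I$ in Figure \ref{fig:TI}; (ii) checking that the contributions of the $D=q^{\frac14 H\otimes H}$ factors and of all higher-$n$ terms in the $R$-matrix genuinely drop out modulo $\hbar^{m+1}$; and (iii) confirming the sign/orientation conventions so that the coefficient is exactly $\mu_J(L)$ with no spurious factor. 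Once this local identity is established, summing over $J\in\mathcal{I}_{m+1}$ with coefficients $\mu_J(L)$ gives Theorem \ref{sth1}; applying $\rho$ and comparing with $w_m(T^{(l)}_I)=\varsigma^{(l)}_I$ together with the definition $\mu^h_m(L)=\sum_{I\in\mathcal{I}_{m+1}}\mu_I(L)\,T^{(l)}_I$ yields Theorem \ref{sth2h}.
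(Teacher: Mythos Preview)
Your proposal is correct and follows essentially the same strategy as the paper: reduce to the explicit braid representative via Lemma~\ref{lem:braidlh} and the link-homotopy invariance of $J^h$ modulo $\hbar^{m+1}$ (Proposition~\ref{s241}), use multiplicativity of $J$ to discard the higher factors $b^L_{m+1},\ldots,b^L_{l-1}$, and then carry out the inductive commutator computation $J(B^{(l)}_J)\equiv 1+\varsigma^{(l)}_J\hbar^m\oh{m+1}$ (this is Proposition~\ref{sp} in the paper, proved exactly as you outline via $[1+x\hbar^{m-1},1+c\hbar]\equiv 1+[x,c]\hbar^m$ and the recursion~\eqref{varsigma} for $\varsigma_{m+1}$). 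Your worries~(ii) and~(iii) about the $D$-factor and signs are harmless: once one uses only $J(A_{j_k,j_{k+1}})\equiv 1+c^{(l)}_{j_k,j_{k+1}}\hbar\oh{2}$, the induction is purely in the associated graded and no finer bookkeeping is needed.
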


\begin{proof}[Proof of equivalency of Theorem \ref{sth2h} and \ref{sth1}]
We need to prove that 
\begin{align} \label{equivalence}
(w_m \circ \mu^h_{m})(L)= \sum_{I\in \mathcal{I}_{m+1}} \mu_I(L) \varsigma^{(l)}_I.
\end{align}
Recall from Lemma \ref{lem:braidlh} that if  $L\in SL_m^h(l)$, then $L$ is link-homotopic to $b_m^Lb_{m+1}^L\cdots b_{l-1}^L$, where the pure braids $b^L_i$ are defined in (\ref{eq:braid}). 
Actually, it follows directly from Equation (\ref{eq:braid}) that if $L\in SL_m^h(l)$, then we have $\mu^h_{m} (L)=\mu^h_{m} (b_m^L)$.
We thus have that 
\begin{align*}
\mu^h_{m} (L)
&=\mu^h_{m} (b_m^L)
\\
&=\mu^h_{m} \left(\prod_{I\in \mathcal{I}_{m+1}} (B_{I})^{\mu_{I}(b^L_m)}\right)
\\
&=\mu^h_{m} \left(\prod_{I\in \mathcal{I}_{m+1}} (B_{I})^{\mu_{I}(L)}\right)
\\
&=\sum_{I\in \mathcal{I}_{m+1}} \mu_{I}(L).\mu^h_{m} (B_{I}),
\end{align*}
where the last equality uses the additivity of the first non-vanishing Milnor string link invariants. 
The result then follows from (\ref{formula2}) and the fact that $\mu^h_m(B_I)=T_I$ for $I\in \mathcal{I}_{m+1}$, 
which can be easily checked either by a direct computation or using (\ref{equ:HMh}).
\end{proof}
We prove Theorems \ref{sth1} in the next section.

\section{Proof of Theorem \ref{sth1} : the link-homotopy case}\label{6}

We reduce Theorem \ref{sth1} to the following two propositions.
The first one shows that the invariant $J^h$ is well-behaved with respect to link-homotopy.
\begin{proposition}\label{s241}
Let $L, L' \in SL^h_m(l)$  be two link-homotopic  string links.
Then we have
 \begin{align*}
  J^h(L') \equiv J^h(L) \oh{m+1}.  
  \end{align*}
In particular, if $L$ is link-homotopic to the trivial string link, then $J^h(L)$ is trivial.  
\end{proposition}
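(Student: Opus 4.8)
The plan is to reduce the statement to a computation on the explicit pure-braid representatives provided by Lemma \ref{lem:braidlh}, exploiting the fact that $J$ is a monoid homomorphism from $(SL(l),\cdot)$ to $(U_\hbar^{\hat\otimes l},\cdot)$. First I would observe that, since $L,L'\in SL^h_m(l)$ are link-homotopic, Lemma \ref{lem:braidlh} gives $L\simeq b^L_m\cdots b^L_{l-1}$ and $L'\simeq b^{L'}_m\cdots b^{L'}_{l-1}$ with $b^L_i,b^{L'}_i$ products of the commutator pure braids $B_J^{(l)}$ for $J\in\mathcal{I}_{i+1}$, and that by (\ref{eq:braid}) the exponents in degree $i$ depend only on $\mu_J(L)$, resp. $\mu_J(L')$. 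So the real content is: (i) $J^h$ is unchanged under self-crossing changes modulo $\hbar^{m+1}$ when applied to elements of $SL^h_m(l)$, so that $J^h$ descends to a link-homotopy invariant on this filtration level; and (ii) the value of $J^h(L)\bmod\hbar^{m+1}$ is controlled by the $b^L_i$ with $i\le m$ only — indeed only by $b^L_m$, the others contributing in degrees $\ge m+1$.

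The key steps, in order, are as follows. First, establish that $J(B_J^{(l)})\equiv 1\oh{m+1}$ whenever $B_J^{(l)}\in\Gamma_{m}P(l)$ with the commutator length forcing the leading term into degree $\ge m+1$ of the symmetric algebra; more precisely, I would show by induction on commutator length, using $\Delta_\hbar$-multiplicativity of $R^{\pm1}$ and the fact that a group commutator $[g,h]$ maps under $J$ to $1$ plus terms supported in higher tensor-degree, that $J(B_J^{(l)})\equiv 1 + (\text{terms of }\hbar\text{-order}\ge\text{commutator length})$. This is the analogue, on the quantum side, of the Magnus-expansion estimate $E(\Gamma_k\F)\subset(\deg\ge k)$ used for Milnor numbers. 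Second, use multiplicativity of $J$ together with the filtration $SL(l)\supset SL^h_1(l)\supset\cdots$ to conclude that for $L\in SL^h_m(l)$, $J^h(L)$ modulo $\hbar^{m+1}$ equals $J^h(b^L_m)$ modulo $\hbar^{m+1}$, since $b^L_{m+1},\ldots,b^L_{l-1}$ lie in $\Gamma_{m+1}P(l)$ and hence contribute only in $\hbar$-order $\ge m+1$, and cross-terms between $b^L_m$ and the higher $b^L_i$ also land in order $\ge m+1$. Third, since $\mu_J(b^L_m)=\mu_J(L)$ for $J\in\mathcal{I}_{m+1}$ by (\ref{eq:braid}), and $\mu_J(L)=\mu_J(L')$ for all such $J$ by the Habegger--Lin link-homotopy classification (string links are classified up to link-homotopy by Milnor invariants with non-repeated indices), we get $b^L_m$ and $b^{L'}_m$ have the same $\mathcal{I}_{m+1}$-exponents, whence $J^h(b^L_m)\equiv J^h(b^{L'}_m)\oh{m+1}$ and therefore $J^h(L)\equiv J^h(L')\oh{m+1}$. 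For the "in particular" clause, the trivial string link is itself a representative and $J(\mathbf{1}_l)=1$, so $J^h(L)\equiv 0\oh{m+1}$, i.e. trivial in $\bigoplus_{m=1}^{l-1}\langle sl_2\rangle^{(l)}_{m+1}\hbar^m$.

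The main obstacle I expect is the first step: proving the quantum commutator estimate $J(\Gamma_m P(l))\subset 1 + (\hbar\text{-order}\ge m)$ in the relevant graded sense, and more specifically keeping track of the grading in the symmetric algebra $S(sl_2)^{\otimes l}$ under the $\mathbb{Q}$-module identification $\rho$ of Section \ref{5.1}, which is \emph{not} an algebra isomorphism. Because $\rho$ fails to be multiplicative, I cannot simply say "$J$ of a commutator is a commutator"; instead I would need to work with the PBW normal form and carefully verify that, after normal-ordering, a group commutator of $A_{i,j}$'s produces no contribution of $\hbar$-order $<m$ and that its order-$m$ part, when it does appear, is an element of $\langle sl_2\rangle^{(l)}_{m+1}$ — this is where claspers (deferred to Section \ref{8}) and the structure of $R^{\pm1}$ modulo higher order enter, and where the bookkeeping is genuinely delicate. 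A secondary but manageable point is handling self-crossing changes directly (rather than only through the braid representative), which amounts to checking that changing a self-crossing alters $J$ by an element of $\hbar^{m+1}U_\hbar^{\hat\otimes l}$ plus something in the image of the "diagonal" subspace that $\pi^h$ kills; this can be folded into the clasper calculus of Section \ref{8}.
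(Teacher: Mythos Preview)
Your proposal conflates Proposition~\ref{s241} with Theorem~\ref{sth1}. The commutator estimate and the braid-representative reduction you outline as ``key steps'' are precisely the argument the paper gives for Theorem~\ref{sth1} \emph{assuming} Proposition~\ref{s241} as input (see the proof of Theorem~\ref{sth1} in Section~\ref{6}). They do not by themselves prove Proposition~\ref{s241}: to pass from $J^h(L)$ to $J^h(b^L_m\cdots b^L_{l-1})$ you invoke Lemma~\ref{lem:braidlh}, but that lemma only gives $L\simeq b^L_m\cdots b^L_{l-1}$ up to \emph{link-homotopy}, so this step already presupposes that $J^h$ is invariant under self-crossing changes modulo $\hbar^{m+1}$ --- which is exactly the statement you are trying to prove. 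Once that invariance is known, the braid material is superfluous for Proposition~\ref{s241}: link-homotopic $L,L'$ would immediately satisfy $J^h(L)\equiv J^h(L')\oh{m+1}$ without any appeal to representatives.

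The point you relegate to a ``secondary but manageable'' final sentence is therefore the entire content of the proposition. The paper's proof (Section~\ref{8}) makes this explicit: a self-crossing change is surgery along a repeated degree-$1$ tree clasper, and Lemma~\ref{slide} together with Lemmas~\ref{lem:calculus} and~\ref{lem:slide} show that, modulo $C_{m+1}$-equivalence, the effect of passing from $L'$ to $L$ is multiplication by a product of factors $J(\mathbf{1}_{R_j})$ where each $R_j$ is an overpassing \emph{repeated} tree clasper. The crucial computation is Lemma~\ref{lem:repeat}, which shows that each such factor lies in $1+\prod_{j\ge 1}(S^{\otimes l})_{\mathrm{supp}\le j}\hbar^j$; the reason is that a repeated clasper touches at most $k\le m$ components (one fewer than its number of leaves), so the nontrivial part of $J(\mathbf{1}_{R_j})$ has support bounded by its $\hbar$-order. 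Since $\pi^h$ lands in $\langle sl_2\rangle^{(l)}_{m+1}\hbar^m$, where every monomial has support exactly $m+1$, multiplication by such factors is invisible to $\pi^h$ modulo $\hbar^{m+1}$. This support-counting argument is the genuine idea; your proposal gestures at it (``diagonal subspace that $\pi^h$ kills'') but does not isolate it as the main step, and the commutator/PBW bookkeeping you flag as the ``main obstacle'' plays no role here.
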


For the second proposition, 
recall from Section \ref{sec:pure} that for each sequence $J\in \mathcal{I}_{m+1}$ we defined a pure braid $B^{(l)}_J$ which lies in the $m$th term of the lower central series of $P(l)$. 
\begin{proposition}\label{sp}
For any $J\in \mathcal{I}_{m+1}$, we have 
\begin{align*}
J(B^{(l)}_{J})& \equiv  1+\varsigma^{(l)}_{J} {\hbar}^m \oh{m+1},
\end{align*}
where $\varsigma^{(l)}_{J} \in $ was defined in Section \ref{sec:compute}. 
\end{proposition}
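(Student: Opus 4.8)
The plan is to argue by induction along the iterated commutator defining $B^{(l)}_J$, using that the universal $sl_2$ invariant is multiplicative under stacking, i.e.\ that $J\co SL(l)\to U_{\hbar}^{\hat\otimes l}$ is a monoid homomorphism (see \cite{O}). Fix $J=(j_1,\ldots,j_{m+1})\in\mathcal{I}_{m+1}$; recall that every sequence in $\mathcal{I}_{m+1}$ has \emph{pairwise distinct} entries, so that, since $\varsigma_{m+1}\in sl_2^{\otimes(m+1)}$ has each tensorand a single element of $sl_2$, the element $\varsigma^{(l)}_J\in U_{\hbar}^{\hat\otimes l}$ is unambiguously obtained by placing the $i$th tensorand of $\varsigma_{m+1}$ in the $j_i$th slot (via $sl_2\hookrightarrow U_{\hbar}$). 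For $k=2,\ldots,m+1$ set
\[
 B_k=[[\ldots[A_{j_1,j_2},A_{j_2,j_3}],\ldots],A_{j_{k-1},j_k}]\in\Gamma_{k-1}P(l),
\]
so that $B_2=A_{j_1,j_2}$, $B_{k+1}=[B_k,A_{j_k,j_{k+1}}]$ and $B_{m+1}=B^{(l)}_J$. I will prove by induction on $k$ that
\[
 J(B_k)\equiv 1+\varsigma^{(l)}_{(j_1,\ldots,j_k)}\,\hbar^{k-1}\oh{k};
\]
the case $k=m+1$ is the proposition.

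The base case $k=2$ follows from Proposition \ref{sc}: the pure braid $A_{j_1,j_2}$ has linking number $1$ between components $j_1,j_2$ and vanishing framings and remaining linking numbers, whence $\coef(J(A_{j_1,j_2}))=c^{(l)}_{j_1j_2}=\varsigma^{(l)}_{(j_1,j_2)}$ (recall $\varsigma_2=c$, and $c^{(l)}_{ij}$ is symmetric in $i,j$). For the inductive step, put $x=B_k$, $y=A_{j_k,j_{k+1}}$, $a=J(x)-1$ and $b=J(y)-1$; by the induction hypothesis $a=\varsigma^{(l)}_{(j_1,\ldots,j_k)}\hbar^{k-1}+O(\hbar^k)$, and by the base case $b=c^{(l)}_{j_kj_{k+1}}\hbar+O(\hbar^2)$. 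Since $J$ is multiplicative and $J(x),J(y)\in 1+\hbar U_{\hbar}^{\hat\otimes l}$ are units (so $J(x^{-1})=J(x)^{-1}$), expanding the group commutator gives $J(B_{k+1})=(1+a)(1+b)(1+a)^{-1}(1+b)^{-1}=1+(ab-ba)+R$, where $R$ is a sum of monomials of degree $\ge3$ in $a,b$, each of which contains at least one factor $a$ (because $[1,b]=1$); for $k\ge2$ every such monomial, together with the higher-order part of $ab-ba$, lies in $\hbar^{k+1}U_{\hbar}^{\hat\otimes l}$, so that
\[
 J(B_{k+1})\equiv 1+\big[\varsigma^{(l)}_{(j_1,\ldots,j_k)},\,c^{(l)}_{j_kj_{k+1}}\big]\,\hbar^{k}\oh{k+1},
\]
the bracket being the commutator in $U_{\hbar}^{\hat\otimes l}$.

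It remains to identify $[\varsigma^{(l)}_{(j_1,\ldots,j_k)},c^{(l)}_{j_kj_{k+1}}]$ with $\varsigma^{(l)}_{(j_1,\ldots,j_{k+1})}$ modulo $\hbar$ (enough in view of the prefactor $\hbar^k$). Here the distinctness of the indices is essential: $\varsigma^{(l)}_{(j_1,\ldots,j_k)}$ is supported on the slots $j_1,\ldots,j_k$ while $c^{(l)}_{j_kj_{k+1}}$ is supported on $j_k,j_{k+1}$, and since $j_{k+1}\notin\{j_1,\ldots,j_k\}$ the two factors overlap only in the $j_k$th slot. Writing $\varsigma_k=\sum X\otimes\alpha$ with $\alpha\in sl_2$ the last tensorand and $c=\sum c'\otimes c''$, the commutator is therefore obtained from $\sum X\otimes[\alpha,c']\otimes c''$ by placing its $i$th factor in slot $j_i$, where $[\alpha,c']$ is computed in $U_{\hbar}$. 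Modulo $\hbar$ this bracket agrees with the $sl_2$-bracket (the only $\hbar$-dependent defining relation, $EF-FE=(K-K^{-1})/(q^{1/2}-q^{-1/2})$, reduces to $[e,f]=h$ mod $\hbar$; more precisely $[v,w]_{U_{\hbar}}-[v,w]_{sl_2}\in\hbar^{2}U_{\hbar}$ for $v,w$ in the span of $H,E,F$), and $\sum[\alpha,c']\otimes c''=(\mathrm{ad}\otimes1)(\alpha\otimes c)=s(\alpha)$ by the very definition of $s$. Hence $\sum X\otimes s(\alpha)=(1^{\otimes k-1}\otimes s)(\varsigma_k)=\varsigma_{k+1}$, so $[\varsigma^{(l)}_{(j_1,\ldots,j_k)},c^{(l)}_{j_kj_{k+1}}]\equiv\varsigma^{(l)}_{(j_1,\ldots,j_{k+1})}\oh{}$, which closes the induction and proves the proposition.

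No single step is a real obstacle; the part requiring care is the $\hbar$-bookkeeping --- ensuring that the degree-$\ge3$ commutator terms and, above all, the $O(\hbar^2)$ discrepancy between the $U_{\hbar}$-bracket and the $sl_2$-bracket remain in $\hbar^{k+1}U_{\hbar}^{\hat\otimes l}$ after multiplication by $\hbar^k$ --- together with the structural observation that the pairwise distinctness of the indices of a sequence in $\mathcal{I}_{m+1}$ is exactly what keeps $\varsigma^{(l)}_{(j_1,\ldots,j_k)}$ a tensor of single $sl_2$-elements and collapses the whole commutator computation to the one-slot identity $(\mathrm{ad}\otimes1)(\alpha\otimes c)=s(\alpha)$, i.e.\ to the definition of $s$.
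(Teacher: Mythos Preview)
Your proof is correct and follows essentially the same route as the paper's: induction along the iterated commutator defining $B^{(l)}_J$, using multiplicativity of $J$ to reduce to the algebraic identity $[\varsigma^{(l)}_{(j_1,\ldots,j_k)},c^{(l)}_{j_kj_{k+1}}]\equiv\varsigma^{(l)}_{(j_1,\ldots,j_{k+1})}$, which in turn collapses to the defining formula $(1^{\otimes k-1}\otimes s)(\varsigma_k)=\varsigma_{k+1}$. If anything you are more careful than the paper about the $\hbar$-bookkeeping: the paper writes the commutator identity as an equality, but since $[E,F]=(K-K^{-1})/(q^{1/2}-q^{-1/2})=H+O(\hbar^2)$ in $U_\hbar$, that line is really a computation in $U^{\otimes l}\cong U_\hbar^{\hat\otimes l}/\hbar U_\hbar^{\hat\otimes l}$, exactly as you make explicit.
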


\begin{proof}[Proof of Theorem \ref{sth1} assuming Propositions \ref{s241} and \ref{sp}]
We first note that, as an immediate consequence of Proposition \ref{sp},  for any $J\in \mathcal{I}_{m+1}$ we have 
\begin{align}\label{eq:sp}
J^h(B^{(l)}_{J})& \equiv  \varsigma^{(l)}_{J} {\hbar}^m \oh{m+1}.  
\end{align}
Now, let $L\in SL_m^h(l)$, for some $m\geq 1$. 
We have 
\begin{align*}
J^h(L)& \equiv  J^h(b^L_m\cdot b^L_{m+1}\cdot . . . \cdot b^L_{l-1}) 
\\
&\equiv  J^h(b^L_m) 
\\
&\equiv J^h \left( \prod_{I\in \mathcal{I}_{m+1}}B_{I}^{\mu_{I}(L)}  \right)
\\
& \equiv  \sum_{I\in \mathcal{I}_{m+1}}\mu_{I}(L)\varsigma^{(l)}_{I} {\hbar}^m \oh{m+1}, 
\end{align*}
where the first equality uses Lemma \ref{lem:braidlh} and Proposition \ref{s241}, while the last three equalities follow from the definition of 
the pure braids $b^L_i$ and from (\ref{eq:sp}). Thus we have the assertion. 
\end{proof}

The proof of Proposition  \ref{s241}, which makes use of the theory of claspers, is postponed to Section \ref{8}. 
Proposition \ref{sp} is proved by a direct computation, as shown below. 
\begin{proof}[Proof of Proposition \ref{sp}]
Set $J=j_1j_2\ldots j_{m+1}$
The result is shown by induction on $m$. 
For $m=1$, then $B^{(l)}_{j_1 j_2}$ is the pure braid $A^{(l)}_{j_1,j_2}$, so by Proposition \ref{sc}  we have
\begin{align*}
J(B^{(l)}_{j_1 j_2}) &\equiv 1 + c^{(l)}_{j_1, j_2}\hbar 
\\
&\equiv 1 + \varsigma^{(l)}_{j_1 j_2} \hbar \oh{2}, 
\end{align*}
as desired. 

For $m>1$, by the induction hypothesis we have
\begin{align*}
J(B^{(l)}_{J})&=J([B^{(l)}_{j_1\ldots j_m},A^{(l)}_{j_m,j_{m+1}}])
\\
& = [J(B^{(l)}_{j_1\ldots j_m}), J(A^{(l)}_{j_m,j_{m+1}})]
\\
&\in [1+\varsigma_{j_1\ldots j_m}^{(l)}{\hbar}^{m-1} + \hbar^{m}U_{\hbar}^{\hat\otimes l}, 1+c_{j_m, j_{m+1}}^{(l)}\hbar +\hbar^{2}U_{\hbar}^{\hat\otimes l}] 
\\
&\subset 1+[\varsigma_{j_1\ldots j_m}^{(l)},c_{j_m, j_{m+1}}^{(l)}]{\hbar}^{m} + \hbar^{m+1}U_{\hbar}^{\hat \otimes l},
\end{align*}
and on the other hand we have
\begin{align*}
[\varsigma_{j_1\ldots j_m}^{(l)},c_{j_m ,j_{m+1}}^{(l)}]&=\left([\varsigma_m\otimes 1,c^{(m+1)}_{m, {m+1}}]\right)^{(l)}_{j_1\ldots  j_{m+1}}
\\
&=\left((1^{\otimes m-2}\otimes \mathrm{ad} \otimes 1)(\varsigma_{m}\otimes c)\right)^{(l)}_{j_1\ldots j_{m+1}}
\\
&=\left((1^{\otimes m-2}\otimes s)(\varsigma_{m})\right)^{(l)}_{j_1\ldots  j_{m+1}}
\\
&=(\varsigma_{m+1})^{(l)}_{j_1\ldots  j_{m+1}}
\\
&=\varsigma_{J}^{(l)}.
\end{align*}
This completes the proof.
\end{proof}

\section{Proof of Theorem \ref{sth2} : the general case} \label{7}

In this section, we show how to deduce Theorem \ref{sth2} from Theorem \ref{sth2h}. 

First, let us set some notation for the various projection maps that will be used throughout this section. 
For $i,j\ge 1$, let 
  $$ \pi^t_{i,j} \co S^{ \otimes l}[[\hbar]]\to (S^{ \otimes l})_i \hbar^j, $$
where $(S^{ \otimes l})_i$ was defined in Section \ref{5.1}, and set also 
$ \pi^t_{i}:=\pi^t_{i+1,i}$, so that $\pi^t=\prod_{i\ge 1} \pi^t_{i}$. 
Likewise, let 
  $$ \pi^h_{i,j} \co S^{ \otimes l}[[\hbar]]\to \langle sl_2 \rangle^{(l)}_{i} \hbar^j, $$
and set $ \pi^h_{i}:=\pi^h_{i+1,i}$.  

Next, recall that there are two (completed) coalgebra structures on  $S[[\hbar]]$ as follows.  
The first one is defined by  $\bar \Delta (x)=x\otimes 1+1\otimes x$  and $\varepsilon (x)=0$ for $x\in sl_2$ as algebra morphisms. 
The second one is induced  by the coalgebra structure of  $U_{\hbar}$ defined in Section \ref{3}, via  the $\mathbb{Q}$-module isomorphism $\rho \co  U_{\hbar} \to S[[\hbar]]$ seen in Section \ref{5.1}.
For $\Delta=\Delta _{\hbar}, \bar \Delta$  and  $p\geq 0$,  define  
$$\Delta^{[p]} \co S[[\hbar]]\to S^{\otimes p}[[\hbar]]$$ 
by $\Delta^{[0]}=\varepsilon$, $\Delta^{[1]}=\id$, $\Delta^{[2]}=\Delta,$ and $\Delta^{[p]}= (\Delta \otimes 1^{\otimes p-2})\circ \Delta^{[p]}$ for $p\geq 3$.
Abusing notation, for $l\geq 0,$ we  write $\Delta^{(p)}:=(\Delta^{[p]})^{\otimes l}\co S^{ \otimes l}[[\hbar]]\to S^{\otimes pl}[[\hbar]]$.

Since we have $\Delta _{\hbar}(y)\equiv\bar \Delta(y) \oh{}$ for any $y\in S$, 
the restriction of $\pi_{i,j}^h\circ \bar \Delta^{(p)}$ to $(S^{\otimes l})_{i}\hbar^j$ is equal to that of
$\pi_{i,j}^h\circ \Delta_{\hbar}^{(p)}$ for any $1\le i,j < p$, that is, we have  
\begin{align}\label{eq:delta}
\pi_{i,j}^h\circ \bar \Delta^{(p)}|_{(S^{\otimes l})_{i}\hbar^j} =\pi_{i,j}^h\circ \Delta_{\hbar}^{(p)}|_{(S^{\otimes l})_{i}\hbar^j}.  
\end{align}
Actually, the injectivity of these maps is one of the key points in this section.  
\begin{lemma}\label{sl3}
For any $1\le i,j < p$, the restriction to $(S^{\otimes l})_{i}\hbar^j$ of the $\mathbb{Q}$-linear map  
$\pi_{i,j}^h \circ \bar \Delta^{(p)}$ is injective. 
\end{lemma}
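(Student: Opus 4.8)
The plan is to analyze the map $\pi_{i,j}^h \circ \bar\Delta^{(p)}$ restricted to $(S^{\otimes l})_i\hbar^j$ by decomposing it tensorand-by-tensorand and then reducing to a single-copy statement about the free cocommutative coproduct on $S = S(sl_2)$. Since $\bar\Delta^{(p)} = (\bar\Delta^{[p]})^{\otimes l}$ and the target projection $\pi^h_{i,j}$ keeps only those terms of total degree $i$ in which each of the $pl$ tensorands has degree $\le 1$, the $\hbar^j$ factor is inert and I will just track the degree-$i$ part of $S^{\otimes l} \to S^{\otimes pl}$. First I would reduce to understanding, for a single copy of $S$, the composite of the iterated coproduct $\bar\Delta^{[p]}\co S \to S^{\otimes p}$ with the projection onto the span of tensors whose entries are each of degree $\le 1$; call this $\phi_d$ on the degree-$d$ part $S_d$. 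Because $S$ is the polynomial algebra on the 3-dimensional space $sl_2$ and $\bar\Delta$ is the standard cocommutative coproduct with $sl_2$ primitive, a monomial $v_1\cdots v_d \in S_d$ (with $v_a \in sl_2$) maps under $\bar\Delta^{[p]}$ to $\sum v_1^{(f(1))}\cdots v_d^{(f(d))}$ over all functions $f\co \{1,\dots,d\}\to\{1,\dots,p\}$, and keeping only the "degree $\le 1$ in each slot" part forces $f$ to be injective, so $\phi_d(v_1\cdots v_d) = \sum_{f\co [d]\hookrightarrow[p]} v_{f^{-1}(1)}\otimes\cdots$, i.e. the full symmetrization spread across $d$ of the $p$ slots. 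The key point is that this is injective on $S_d$ provided $d \le p$ — indeed one recovers $v_1\cdots v_d$ (up to the known constant $d!$, accounting for multiplicities) by summing over the slots, i.e. by applying the multiplication map $S^{\otimes p}\to S$ to the image; more carefully, $\mathrm{mult}^{[p]}\circ \phi_d = d!\cdot \id_{S_d}$.

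With this single-copy fact in hand, the second step is to globalize: an element $x \in (S^{\otimes l})_i$ decomposes as $x = \sum_{m_1+\cdots+m_l = i} x^{(m_1,\dots,m_l)}$ with $x^{(m_1,\dots,m_l)} \in S_{m_1}\otimes\cdots\otimes S_{m_l}$, and $\bar\Delta^{(p)}$ acts as the tensor product $\bigotimes_{r=1}^l \bar\Delta^{[p]}$ on the $r$th factor. The projection $\pi^h_{i,j}$ on $S^{\otimes pl}$ factors through the product of the "degree $\le 1$ in each slot" projections on each of the $l$ blocks of $p$ tensorands, so $\pi^h_{i,j}\circ\bar\Delta^{(p)}$ restricted to $S_{m_1}\otimes\cdots\otimes S_{m_l}$ is exactly $\bigotimes_{r=1}^l \phi_{m_r}$ (times $\hbar^j$). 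Since each $m_r \le i < p$ by hypothesis, each $\phi_{m_r}$ is injective by the single-copy argument, hence so is the tensor product; and distinct multidegrees $(m_1,\dots,m_l)$ land in distinct (disjoint) summands of the target $\langle sl_2\rangle^{(l)}_i$ indexed by which slots in each block are occupied — or at least, one can recover each multidegree component by composing with the block-wise multiplication maps, which reproduce $\prod_r m_r! \cdot \id$ on $S_{m_1}\otimes\cdots\otimes S_{m_l}$. Summing, $\bigotimes_r \mathrm{mult}^{[p]} \circ \pi^h_{i,j}\circ\bar\Delta^{(p)}$ equals multiplication by the nonzero scalar $\prod_r m_r!$ on each multidegree block, which gives a left inverse up to these scalars and proves injectivity of $\pi^h_{i,j}\circ\bar\Delta^{(p)}$ on $(S^{\otimes l})_i\hbar^j$.

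The main obstacle I anticipate is bookkeeping rather than conceptual: one must be careful that the projection onto $\langle sl_2\rangle^{(l)}_i$ genuinely corresponds block-by-block to the "at most one $sl_2$-element per slot" condition, and that the constants coming from multiplicities (a monomial like $e^2$ in $S_2$ has its $2$ identical factors, producing the $m!$ rather than some variable coefficient) work out uniformly so that the left inverse is a genuine scalar multiple of the identity on each graded piece — this needs the hypothesis $i < p$ to ensure no slot is ever forced to hold two elements, which is exactly where $i < p$ (and hence $m_r \le i < p$) is used. Once the single-copy computation $\mathrm{mult}^{[p]}\circ\bar\Delta^{[p]}$ on $S_d$ for $d < p$ is pinned down, the rest is a clean tensor-product and direct-sum argument. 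It is also worth remarking that this is precisely the point of Equation~(\ref{eq:delta}): having reduced to $\bar\Delta$ instead of $\Delta_\hbar$ makes the coproduct explicit and cocommutative, which is what makes the injectivity computation elementary.
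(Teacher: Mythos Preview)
Your approach is essentially the same as the paper's: the paper observes in one line that $\nabla^{(p)}\circ \pi_{i,j}^h \circ \bar\Delta^{(p)}$ is a scalar map on $(S^{\otimes l})_{i}\hbar^j$ (where $\nabla^{(p)}$ is exactly your block-wise multiplication $\bigotimes_r \mathrm{mult}^{[p]}$), and you unpack this in detail. One small correction: the scalar on $S_d$ is $p(p-1)\cdots(p-d+1)=\dfrac{p!}{(p-d)!}$, not $d!$ --- for instance when $d=1$ you get $\mathrm{mult}^{[p]}\big(\sum_a v^{(a)}\big)=p\,v$ --- so on the multidegree block $S_{m_1}\otimes\cdots\otimes S_{m_l}$ the scalar is $\prod_r \dfrac{p!}{(p-m_r)!}$; this is still nonzero since each $m_r\le i<p$, so your injectivity argument goes through unchanged.
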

\begin{proof}
This simply follows from the fact that the map $\nabla^{(p)}\circ \pi_{i,j}^h \circ \bar\Delta^{(p)}$ is a scalar map on $(S^{\otimes l})_{i}\hbar^j$, where  $\nabla^{(p)}\co U_{\hbar}^{\hat \otimes pl} \to U_{\hbar}^{\hat \otimes l}$  is the tensor power of $p$-fold multiplications, i.e. the map sending $x_1\otimes \cdots \otimes  x_{pl}\in U_{\hbar}^{\hat \otimes pl} $ to $x_1\cdots x_p\otimes \cdots \otimes  x_{p(l-1)+1}\cdots x_{pl}\in U_{\hbar}^{\hat \otimes l}$. 
\end{proof}

Now, for $p\geq 1$, let  $D^{(p)}: SL(l)\rightarrow SL(pl)$ be the cabling map,  which sends a string link $L\in SL(l)$ to the string link $D^{(p)}(L)\in SL(pl)$ obtained by replacing each component with $p$ parallel copies. 
Recall from \cite{HMa} that, for $m\ge 1$ and $p>m$, we have that $L\in SL_m(l)$ if and only if $D^{(p)}(L)\in SL^h_m(pl)$.  We have the following.
\begin{lemma}\label{scom}
For $1\le m < p$,  the following diagram commutes 
\begin{align*}
\xymatrixcolsep{5pc}
\xymatrix{
SL_m(l)\ar[d]_{D^{(p)}} \ar@{->}[r]^{ W\circ \mu_{m}} & (S^{\otimes l})_{m+1} \hbar^m\ar[d]^-{\pi^h_m \circ \bar \Delta^{(p)}} \\
SL^h_m(pl) \ar@{->}[r]_{W \circ \mu^h_{m}} & \langle sl_2 \rangle^{(pl)}_{m+1}\hbar^m. 
}
\end{align*}
\end{lemma}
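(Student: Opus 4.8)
The plan is to verify commutativity of the square by chasing a string link $L \in SL_m(l)$ around both sides and comparing the outputs in $\langle sl_2 \rangle^{(pl)}_{m+1}\hbar^m$. Going down then right, we must compute $(W \circ \mu^h_{m})(D^{(p)}(L))$. Going right then down, we must compute $(\pi^h_m \circ \bar\Delta^{(p)})((W \circ \mu_{m})(L))$. The key observation is that the cabling map $D^{(p)}$ interacts with Milnor invariants in a controlled way: replacing each component by $p$ parallel copies sends the Milnor map $\mu_m$ to a sum over the ways of relabeling each index $j \in \{1,\ldots,l\}$ by one of its $p$ ``children'' in $\{1,\ldots,pl\}$. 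Diagrammatically, on the level of tree Jacobi diagrams in $\mathcal{C}^t_m(l)$, the cabling operation corresponds precisely to the cocommutative comultiplication $\bar\Delta$ applied at each univalent vertex (each $j$-labeled leg becomes a sum of legs labeled by the $p$ parallel copies of strand $j$), with the link-homotopy projection $p^h$ killing all the ``diagonal'' terms where two legs land on the same copy. This is essentially the content of \cite[Section 5]{HMa}, which relates Milnor concordance invariants of $L$ to Milnor link-homotopy invariants of $D^{(p)}(L)$.

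First I would spell out, using the isomorphism (\ref{isomu}) and the spanning set $\{T^{(l)}_I\}$ of $\mathcal{C}^t_m(l)$, how $\mu_m$ and $\mu^h_m$ behave under $D^{(p)}$: for $L \in SL_m(l)$, the diagram $\mu_m(L) = \sum_I \mu_I(L) T^{(l)}_I$ pulls back under cabling to $\mu^h_m(D^{(p)}(L)) = \sum_I \mu_I(L)\, p^h\bigl(\sum_{\tilde I} T^{(pl)}_{\tilde I}\bigr)$, where $\tilde I$ runs over all lifts of $I$ to multi-indices with entries among the $p$ copies. Then I would compute the right-hand side of the square: apply $W$ to get $\sum_I \mu_I(L) \varsigma^{(l)}_I \hbar^m$ by (\ref{formula2}), then apply $\bar\Delta^{(p)}$. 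Since $W$ intertwines the coalgebra structures — more precisely, since $\varsigma_{m+1} \in \langle sl_2 \rangle^{(m+1)}_{m+1}$ has each tensorand of degree one and $\bar\Delta(x) = x\otimes 1 + 1\otimes x$ for $x \in sl_2$ — applying $\bar\Delta^{(p)}$ to $\varsigma^{(l)}_I$ and then projecting via $\pi^h_m$ reproduces exactly the sum over lifts $\tilde I$ that appears on the other route. The two computations match term by term, giving commutativity.

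The main obstacle I expect is bookkeeping the projection $\pi^h_m$ correctly: $\bar\Delta^{(p)}$ produces many terms of various degrees in $S^{\otimes pl}[[\hbar]]$, and one must check that precisely the terms surviving $\pi^h_m$ (those landing in $\langle sl_2\rangle^{(pl)}_{m+1}$, i.e. with exactly $m+1$ degree-one tensorands) are the ones matching $p^h$ applied to the cabled trees. Concretely, when $\bar\Delta$ is applied to a tensorand $x \in sl_2$ sitting at position $j$, the term $x\otimes 1 + 1\otimes x$ distributes $x$ to exactly one of the $p$ copies — this is what keeps each copy of degree $\le 1$ and is the algebraic shadow of ``no chord between the same cabled component'' that defines $\mathcal{A}^h$. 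I would make this precise by noting that $\varsigma^{(l)}_I$, being a product-free tensor (each index of $I$ contributes to a distinct tensorand except when $I$ has repeats), behaves well under $\bar\Delta^{(p)}$ followed by $\pi^h_m$; the case where $I$ has repeated indices requires a small extra check, but since $\mathcal{C}^h$ only sees distinct labels and cabling of a repeated index still distributes across distinct copies, this is handled by the same mechanism. I would also invoke Lemma \ref{sl3} as a sanity check that no information is lost, though strictly it is not needed for commutativity itself. The remaining verification is a routine but slightly tedious combinatorial identity between two sums of tree diagrams, which I would present as a short lemma-style computation rather than grinding through all index manipulations.
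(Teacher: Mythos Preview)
Your approach is correct and essentially matches the paper's: the paper factors the square through $\mathcal{C}^t_m(l)$ and $\mathcal{C}^h_m(pl)$ via a diagrammatic cabling map $D^{(p)}$ on labeled trees, citing Habegger--Masbaum for the Milnor-map half and noting that the weight-system half commutes ``as a direct consequence of the definitions''. Your explicit unpacking in terms of the $T^{(l)}_I$, $\varsigma^{(l)}_I$, and the primitive comultiplication $\bar\Delta$ is exactly that direct verification, and you are right that Lemma~\ref{sl3} is not needed for commutativity itself.
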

\begin{proof}
Denote by $D ^{(p)}: \mathcal{C}^t_m(l)\rightarrow \mathcal{C}^t_m(pl)$ the map defined by sending a tree Jacobi diagram $\xi \in \mathcal{C}^t_m(l)$ to the  sum of all diagrams obtained from $\xi$ by replacing each label $i\in \{1,\ldots, l\}$ by one of ${(i-1)p+1, (i-1)p+2, \ldots, ip}$.
Then the lemma follows from the following two commutative diagrams
$$
\xymatrix{
SL_m(l)\ar[d]_{D^{(p)}} \ar@{->}[r]^{\mu_{m}} & \mathcal{C}^t_m(l)\ar[d]^{p^h\circ D^{(p)}} \\
SL^h_m(pl) \ar@{->}[r]_{\mu^h_{m}} & \mathcal{C}^h_m(pl),
}
 \textrm{ and }
\xymatrix{
\mathcal{C}^t_m(l)\ar[d]_{p^h\circ D^{(p)}} \ar@{->}[r]^{W\ \ \ } & (S^{\otimes l})_{m+1}\hbar^m \ar[d]^-{\pi^h_m\circ \bar \Delta^{(p)}} \\
\mathcal{C}^h_m(pl)\ar@{->}[r]_{W\ \ \ } & \langle sl_2 \rangle^{(pl)}_{m+1}\hbar^m.
}
$$
The fact that the left-hand side diagram commutes is due to Habegger and Masbaum \cite{HMa}, while the commutativity of the right-hand side diagram is a direct consequence of the definitions.  
\end{proof}

The next technical lemma will be shown in Section \ref{8}.
\begin{lemma}\label{sl4}
Let $L\in SL^h_m(l)$, and $1\leq j\leq i-2\leq m$.  We have $\pi^h_{i,j}(J(L))=0$.
\end{lemma}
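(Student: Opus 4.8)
The plan is to prove Lemma \ref{sl4} by reducing to a computation on the explicit link-homotopy representative provided by Lemma \ref{lem:braidlh}. First I would use Proposition \ref{s241}: since $L\in SL^h_m(l)$, $L$ is link-homotopic to $b^L_m\cdots b^L_{l-1}$, and link-homotopic string links have the same $J^h$ modulo $\hbar^{m+1}$. But Lemma \ref{sl4} concerns the projection $\pi^h_{i,j}$ with $j\le i-2\le m$, which records parts of $J(L)$ of $\hbar$-degree $j\le m$ but $sl_2$-degree $i$ at least $j+2$. The point is that $\pi^h_{i,j}\circ J$ should have a link-homotopy invariance property of its own, at least in the range $j\le m$ relevant here: indeed Proposition \ref{s241} is really about the whole truncation of $J$ modulo $\hbar^{m+1}$, and one should check (as in its proof via claspers, Section \ref{8}) that all the components $\pi^h_{i,j}(J(L))$ for $j\le m$ are link-homotopy invariants of $L\in SL^h_m(l)$. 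Granting that, it suffices to compute $\pi^h_{i,j}(J(b^L_m\cdots b^L_{l-1}))$.

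Next I would exploit the fact that $b^L_m\cdots b^L_{l-1}$ is a product of pure braids of the form $B^{(l)}_{J'}$ with $|J'|\ge m+1$, each lying in $\Gamma_m P(l)$ or deeper. The multiplicativity of $J$ under stacking, together with Proposition \ref{sp} — which gives $J(B^{(l)}_{J'})\equiv 1+\varsigma^{(l)}_{J'}\hbar^{|J'|-1}\pmod{\hbar^{|J'|}}$ — shows that $J(b^L_m\cdots b^L_{l-1})\equiv 1 + (\textrm{terms }\hbar^{m}\textrm{ or lower of a controlled form})\pmod{\hbar^{m+1}}$. More precisely, modulo $\hbar^{m+1}$, the only contribution at $\hbar$-degree $\le m$ comes from the degree-$m$ pure braids $B^{(l)}_{J'}$ with $|J'|=m+1$ (all deeper ones contribute only at $\hbar$-degree $\ge m+1$), and each such contributes $\mu_{J'}(b^L_m)\,\varsigma^{(l)}_{J'}\hbar^m$, plus possibly lower $\hbar$-degree cross terms coming from products of two or more generators — but those cross terms, being products in $U_\hbar^{\hat\otimes l}$ of elements each congruent to $1+c^{(l)}_{\ast}\hbar+\cdots$ supported on $\varsigma$-type tensors, land in $\langle sl_2\rangle^{(l)}$ in each $\hbar$-degree $\le m$, hence are killed by $\pi^h_{i,j}$ precisely when $i\ge j+2$. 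This is the crux: one must show that every term of $J$ of $\hbar$-degree $j\le m$ arising from a string link in $SL^h_m(l)$ has all its $sl_2$-degree $i$ contributions confined to $i\le j+1$, i.e. to $\bigoplus_{i\le j+1}(S^{\otimes l})_i\hbar^j$, so that the components with $i\ge j+2$ vanish.

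The cleanest way to organize this is probably to prove the slightly stronger statement that, for $L\in SL^h_m(l)$, one has $J(L)\in 1+\bigoplus_{j\ge 1}\bigl(\bigoplus_{1\le i\le j+1}\langle sl_2\rangle^{(l)}_i\bigr)\hbar^j$ modulo $\hbar^{m+1}$ — this is exactly the kind of ``variant'' criterion alluded to in Remark \ref{rem:tbw2} and Corollary \ref{cor:final2}, and it immediately yields Lemma \ref{sl4}. To establish it, induct on $m$: the base case $m=1$ follows from Proposition \ref{sc} (linking numbers give $c^{(l)}_{ij}\hbar$, which has $i=2=j+1$); for the inductive step, write the link-homotopy representative as $b^L_m\cdot (b^L_{m+1}\cdots b^L_{l-1})$ where the first factor is a product of $B^{(l)}_{J'}$ with $|J'|=m+1$ (each in $\Gamma_m P(l)$), use Proposition \ref{sp} to see $J(b^L_m)\equiv 1+(\sum \mu_{J'}\varsigma^{(l)}_{J'})\hbar^m\pmod{\hbar^{m+1}}$ with every $\varsigma^{(l)}_{J'}\in\langle sl_2\rangle^{(l)}_{m+1}$, and observe that the remaining factor lies in $SL^h_{m+1}(l)$ so contributes nothing new modulo $\hbar^{m+1}$.

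The main obstacle I anticipate is justifying the link-homotopy invariance of the individual projections $\pi^h_{i,j}\circ J$ in the stated range — i.e. that Proposition \ref{s241}'s proof genuinely controls all of $J$ modulo $\hbar^{m+1}$ and not merely its image under $\pi^h$ — and, relatedly, controlling the ``cross terms'' from products of generators to ensure they stay inside $\langle sl_2\rangle^{(l)}$ in each low $\hbar$-degree. This is precisely where the clasper machinery of Section \ref{8} is needed, and where the bookkeeping of $\hbar$-degree versus $sl_2$-word-length becomes delicate; the ingredients $\rho(F^sH^nE^r\hbar^m)=f^sh^ne^r\hbar^m$ and the explicit form \eqref{univc} of $J$ on the elementary tangle should make the degree estimates routine once the reduction to a braid representative is in place.
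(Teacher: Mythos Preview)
Your overall architecture---reduce to the braid representative $b=b^L_m\cdots b^L_{l-1}$, compute $J(b)$ via Proposition~\ref{sp}, and argue that the passage from $L$ to $b$ is harmless for the projection at hand---is indeed the paper's route, packaged there as Corollary~\ref{coslide}. But two points in your write-up are genuine gaps.

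First, your ``slightly stronger statement'' that $J(L)\in 1+\bigoplus_{j\ge 1}\bigl(\bigoplus_{1\le i\le j+1}\langle sl_2\rangle^{(l)}_i\bigr)\hbar^j$ modulo $\hbar^{m+1}$ is \emph{false}. Take any $L\in SL^h_1(l)$ with nonzero framing on some component: by Proposition~\ref{sc} the coefficient of $\hbar$ in $J(L)$ contains $\tfrac12 m_{ii}c^{(l)}_{ii}$, and $c^{(l)}_{ii}=\tfrac12 h^2+2ef$ sits in $(S^{\otimes l})_2$ with a single tensorand of degree~$2$, hence lies in no $\langle sl_2\rangle^{(l)}_i$. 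The correct control is not on the $sl_2$-degree but on the \emph{support} (number of nontrivial tensorands): the paper shows, via Lemma~\ref{lem:repeat} and Corollary~\ref{coslide}, that
\[
J(L)\in 1+(W\circ\mu^h_m)(L)+\bigoplus_{k=1}^m (S^{\otimes l})_{\mathrm{supp}\le k}\,\hbar^k+\hbar^{m+1}U_\hbar^{\hat\otimes l}.
\]
Since any monomial in $\langle sl_2\rangle^{(l)}_i$ has support exactly $i$, the projection $\pi^h_{i,j}$ annihilates $(S^{\otimes l})_{\mathrm{supp}\le j}\hbar^j$ whenever $i\ge j+2$; this, together with $(W\circ\mu^h_m)(L)\in\langle sl_2\rangle^{(l)}_{m+1}\hbar^m$, gives Lemma~\ref{sl4} in two lines. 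Your formulation in terms of $\langle sl_2\rangle^{(l)}$ confuses the degree filtration with the support filtration; the references you cite (Remark~\ref{rem:tbw2}, Corollary~\ref{cor:final2}) are stated in terms of $(S^{\otimes l})_{\mathrm{supp}\le j}$, not $\langle sl_2\rangle^{(l)}$.

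Second, invoking Proposition~\ref{s241} directly does not suffice: that proposition only controls $J^h=\bigoplus_k \pi^h_{k+1,k}\circ J$, the \emph{diagonal} piece, whereas Lemma~\ref{sl4} concerns the off-diagonal $\pi^h_{i,j}$ with $i\ge j+2$. You correctly flag this as the ``main obstacle'', but what is needed is precisely the finer clasper estimate Lemma~\ref{lem:repeat} (surgery along an overpassing repeated tree clasper produces $J\in 1+\prod_j(S^{\otimes l})_{\mathrm{supp}\le j}\hbar^j$). Once you have that, the detour through an induction on $m$ and an analysis of ``cross terms'' is unnecessary: the braid-representative computation~(\ref{eq:Jbraid}) and the support bound combine directly into Corollary~\ref{coslide}, and Lemma~\ref{sl4} follows immediately.
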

We use Lemma \ref{sl4} to establish the following. 
\begin{lemma}\label{spro1}
For $1\le m < p$,  the following diagram commutes
\begin{align*}
\xymatrix{
SL_m(l)\ar[d]_{D^{(p)}} \ar@{->}[r]^{\pi^t_{m}\circ J\textrm{ }\ \ } & (S^{\otimes l})_{m+1}\hbar^m\ar[d]^{\pi^h_m\circ \Delta_{\hbar} ^{(p)}} \\
SL^h_m(pl) \ar@{->}[r]_{\pi^h_{m}\circ J\textrm{ }\ \ } & \langle sl_2\rangle^{(pl)}_{m+1}\hbar^m.
}
\end{align*}
\end{lemma}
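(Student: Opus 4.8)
The plan is to deduce Lemma \ref{spro1} by comparing the cabling map $D^{(p)}$ on string links with the coproduct map $\Delta_\hbar^{(p)}$ on the target of the universal $sl_2$ invariant. The fundamental point is that the universal invariant transforms naturally under cabling: since $D^{(p)}$ replaces each component by $p$ parallel copies and the comultiplication $\Delta_\hbar$ encodes exactly the operation of ``doubling a strand'' at the level of the Hopf algebra $U_\hbar$, one has the basic identity
\begin{align*}
J(D^{(p)}(L)) = \Delta_\hbar^{(p)}(J(L))
\end{align*}
in $U_\hbar^{\hat\otimes pl}$ (this is the standard behaviour of universal invariants under cabling; see \cite{H1,sakie2}). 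Applying $\pi^h_m$ to both sides, the right-hand route around the square is $\pi^h_m\circ\Delta_\hbar^{(p)}\circ J$ and the bottom-left route is $\pi^h_m\circ J\circ D^{(p)}$, so the two agree on the nose before any reduction — the content of the lemma is that the left vertical arrow may be taken to be $\pi^t_m\circ J$ rather than $J$ itself, i.e. that only the $\pi^t_m$-part of $J(L)$ survives.

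So the key step is to show that the error term $J(L) - (1+\pi^t_m(J(L)))$, i.e. the part of $J(L)$ in $\hbar$-degree $\le m$ other than the degree-$(m+1)$ symmetric-algebra part, contributes nothing after applying $\pi^h_m\circ\Delta_\hbar^{(p)}$. Here is where the two preceding lemmas enter. First, since $L\in SL_m(l)$, the leading nontrivial term of $J(L)$ is in $\hbar$-degree $m$ (this follows from Theorem \ref{sth2} / Proposition \ref{sc}, or more elementarily from the Milnor filtration), so $J(L)\equiv 1+\pi^t_{m,m}(J(L))+\big(\text{terms in }(S^{\otimes l})_i\hbar^m,\ i\ne m+1\big)\oh{m+1}$. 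Now for the terms of the form $(S^{\otimes l})_i\hbar^m$ with $i<m+1$: I claim these are killed by $\pi^h_m\circ\Delta_\hbar^{(p)}$. Indeed, $\Delta_\hbar^{(p)}$ (being a coproduct power) does not increase the $S$-degree in the associated graded sense — comultiplying a word of length $i$ produces a sum of tensors whose total length is still $i$ — so a term of $S$-degree $i<m+1$ lands in $\bigoplus_{i'\le i}(S^{\otimes pl})_{i'}\hbar^m$, which has empty intersection with $\langle sl_2\rangle^{(pl)}_{m+1}\hbar^m$ since the latter lives in $S$-degree exactly $m+1$. This handles the ``too small'' terms.

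The genuinely delicate terms are those of $S$-degree $i>m+1$ sitting in $\hbar$-degree $m$: a priori $J(L)$ could contain such terms, and applying $\Delta_\hbar^{(p)}$ and then projecting to $\langle sl_2\rangle^{(pl)}_{m+1}$ could in principle pick up a contribution (comultiplication splits a long word into several shorter tensorands, which is exactly what $\langle sl_2\rangle$ demands — many tensorands of degree $\le 1$). This is precisely the scenario ruled out by Lemma \ref{sl4}: applied to $D^{(p)}(L)\in SL^h_m(pl)$ (using $SL_m(l)\Rightarrow SL^h_m(pl)$ for $p>m$ from \cite{HMa}), it gives $\pi^h_{i,j}(J(D^{(p)}(L)))=0$ for $j\le i-2\le m$; combined with the naturality identity $J(D^{(p)}(L))=\Delta_\hbar^{(p)}(J(L))$, this forces $\pi^h_{m+1,m}\circ\Delta_\hbar^{(p)}$ to vanish on the $S$-degree-$(>m+1)$, $\hbar$-degree-$m$ part of $J(L)$. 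So after discarding both the too-small and the too-large terms, only $\pi^t_m(J(L))=\pi^t_{m+1,m}(J(L))$ remains relevant, and one is left with the identity $\pi^h_m\circ\Delta_\hbar^{(p)}\circ\pi^t_m\circ J = \pi^h_m\circ J\circ D^{(p)}$ on $SL_m(l)$, which is the commutativity of the square.

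I expect the main obstacle to be the bookkeeping around Lemma \ref{sl4} and the precise claim that $\Delta_\hbar^{(p)}$ is ``degree-non-increasing'' in the appropriate filtered sense: strictly, $\Delta_\hbar$ is not graded (the relation $EF-FE=(K-K^{-1})/(q^{1/2}-q^{-1/2})$ and $K=\exp(\hbar H/2)$ mixes $\hbar$ with word-length), so one must argue at the level of the $\mathbb{Q}$-module identification $\rho$ and the filtration by $S$-degree, using $\Delta_\hbar(y)\equiv\bar\Delta(y)\oh{}$ as in \eqref{eq:delta}. Once that compatibility is set up cleanly — essentially already done in the paragraph preceding Lemma \ref{sl3} — the rest is a matter of assembling the three ingredients (naturality under cabling, the degree bookkeeping, and Lemma \ref{sl4}) into the commuting square.
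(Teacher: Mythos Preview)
Your overall architecture---naturality $J\circ D^{(p)}=\Delta_\hbar^{(p)}\circ J$, then show that only the $\pi^t_m$-piece of $J(L)$ survives under $\pi^h_m\circ\Delta_\hbar^{(p)}$---is exactly the paper's. But the argument breaks at the sentence ``since $L\in SL_m(l)$, the leading nontrivial term of $J(L)$ is in $\hbar$-degree $m$.'' This is false: take the $0$-framed trefoil tensored with a trivial strand. All Milnor invariants vanish (so $L\in SL_m(2)$ for every $m$), yet $J(L)$ has a nontrivial $\hbar^2$-term coming from the $v_2$-invariant of the trefoil. In general nothing prevents $J(L)$ from having rich low-$\hbar$-degree content unrelated to Milnor invariants; the Milnor filtration does not control $J\pmod{\hbar^m}$. (Invoking Theorem~\ref{sth2} here is also circular, since Lemma~\ref{spro1} is an ingredient in its proof.)

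What the paper does instead is prove the confinement \eqref{eq:JT}: for $L\in SL_m(l)$ one has, modulo $\hbar^{m+1}$,
\[
J(L)\in 1+(S^{\otimes l})_{m+1}\hbar^m+\bigoplus_{1\le i\le j\le m}(S^{\otimes l})_i\hbar^j,
\]
so the low-$\hbar$-degree terms are not zero but are forced into the region $i\le j$. This is where Lemma~\ref{sl4} and Theorem~\ref{sth1} are actually used, pushed back from the cabled side via the injectivity Lemma~\ref{sl3}---not, as you suggest, to kill the high-$S$-degree part of the $\hbar^m$-coefficient directly. Once \eqref{eq:JT} is in hand, the paper computes $\Delta_\hbar$ on PBW monomials via Jantzen's formula to show $\Delta_\hbar^{(p)}\big((S^{\otimes l})_i\hbar^j\big)\subset\prod_{u-i\le v-j}(S^{\otimes pl})_u\hbar^v$; since $\langle sl_2\rangle^{(pl)}_{m+1}\hbar^m$ sits at $(u,v)=(m+1,m)$ with $u>v$, the entire $i\le j$ block is annihilated by $\pi^h_m\circ\Delta_\hbar^{(p)}$. (The same degree-increasing property, not Lemma~\ref{sl4}, is what kills the $i>m+1$ terms at $\hbar$-degree $m$: for those, $u\ge i>m+1$ already.) Your final paragraph correctly anticipates that this filtered control of $\Delta_\hbar$ is the crux, but the missing structural fact \eqref{eq:JT} is precisely what bridges the gap you papered over.
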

\begin{proof}
The diagram in the statement decomposes as 
\begin{align*}
\xymatrix{
SL_m(l)\ar[d]_{D^{(p)}} \ar@{->}[r]^{J\ \ } & J(SL_m(l))\ar[d]_{\Delta_{\hbar} ^{(p)}} \ar@{->}[r]^{\pi^t_{m}} & (S^{\otimes l})_{m+1}\hbar^m\ar[d]^{\pi^h_m\circ \Delta_{\hbar} ^{(p)}} \\
SL^h_m(pl) \ar@{->}[r]_{J\ \ } & J(SL^h_m(pl)) \ar@{->}[r]_{\pi^h_{m}} & \langle sl_2\rangle^{(pl)}_{m+1}\hbar^m,
}
\end{align*}
where the left-hand side square commutes as a general property of the universal $sl_{2}$ invariant.
In order to prove that the right-hand square commutes as well, 
we first show that, given a string link $L\in SL_m(l)$, we have 
\begin{align}\label{eq:JT}
J(L)\in 1+ \bigoplus_{ 1\leq i\leq j\leq m}(S^{\otimes l})_i\hbar^j+(S^{\otimes l})_{m+1}\hbar^m + \hbar^{m+1}U_{\hbar}^{\hat \otimes l}.
\end{align}
In other words, we show that  
\begin{itemize}
\item[\rm{(a)}]  $\pi^t_j(J(L))=0$ for $1\leq j< m-1$,
\item[\rm{(b)}]  $\pi^t_{i,j} (J(L))=0$ for $1\leq j\leq i-2\leq m$. 
\end{itemize}
By Lemma \ref{sl3}, if $\pi_{i,j}^t (J(L))\not =0$ for $i,j\geq 1$, then for $p>i,j,$ we have
\begin{align}\label{ieq}
\pi^h_{i,j} (J({D^{(p)}(L)}))=(\pi^h_{i,j}\circ \Delta_{\hbar}^{(p)} )(J(L))\not =0.
\end{align}
However, as already recalled above, the fact that $L\in SL_m(l)$ implies that $J(D^{(p)}(L))$ is in $SL_m^h(pl)$.  
So (\ref{ieq}) above can neither hold in case $1\leq j=i-1< m-1$ by Theorem \ref{sth1} (ii), which implies (a),
nor in case $1\leq j\leq i-2\leq m$ by  Lemma \ref{sl4}, which implies (b).

Let us now proceed with the proof that the right-hand square of the diagram above is commutative. 
In view of (\ref{eq:JT}), we only need to show the following two claims:  
\begin{itemize}
\item[\rm{(i)}]  $\pi_m^h\circ \Delta_{\hbar} ^{(p)} \big((S^{\otimes l})_i\hbar^j\big)=0$ for any $1\leq i\leq j\leq m$, and 
\item[\rm{(ii)}] $\pi_m^h\circ \Delta_{\hbar} ^{(p)} (\hbar^{m+1}U_{\hbar}^{\hat \otimes l})=0$.
\end{itemize}
Claim (ii) is obvious, from the fact that $\Delta_{\hbar} ^{(p)}(\hbar^{i}U_{\hbar}^{\hat \otimes l})\subset \hbar^{i}U_{\hbar}^{\hat \otimes pl}$ for $i \geq0$.
In order to prove Claim (i), it is enough to show for $0\leq i\leq j$ that
\begin{align*}
 \Delta_{\hbar} ^{(p)} \big((S^{\otimes l})_{i}\hbar^j\big)\subset \prod_{0\leq  u\leq v} (S^{\otimes pl})_{u}\hbar^{v}.
\end{align*}
Recall from \cite{H2} that for $s,n,r\ge 0$, $\Delta_{\hbar} (F^sH^nE^r)$ is equal to 
\begin{align*}
\sum_{0\leq j_1\leq s, 0\leq j_2\leq n, 0\leq j_3\leq r} \begin{bmatrix} s \\ j_1 \end{bmatrix}_q\begin{bmatrix} n \\ j_2 \end{bmatrix}_q
 \begin{pmatrix} r\\ j_3 \end{pmatrix}  F^{s-j_1}H^{n-j_2}K^{j_3}E^{r-j_3}\otimes F^{j_1} K^{s-j_1}H^{j_2} E^{j_3}.
\end{align*}
Since $K=\exp\frac{\hbar H}{2}\in \prod_{t\geq 0}\mathbb{Q} H^t \hbar^t $,
the above formula  implies
\begin{align*}
\Delta_{\hbar} (F^sH^nE^r) \in \prod_{0\leq  t\leq k } (S^{\otimes 2})_{s+n+r+t}\hbar^k.
\end{align*}
Thus we have
\begin{align*}
 \Delta_{\hbar} ^{(p)} \big((S^{\otimes l})_{i}\hbar^j\big)\subset \prod_{0\leq  t\leq k } (S^{\otimes pl})_{i+t}\hbar^{j+k}\subset \prod_{0\leq  u\leq v} (S^{\otimes pl})_{u}\hbar^{v}.
\end{align*}
This concludes the proof of Lemma \ref{spro1}. 
\end{proof}

We can finally proceed with the proof of Theorem \ref{sth2}. 

\begin{proof}[Proof of Theorem \ref{sth2}]
Let $L\in SL_m(l)$ for some $m\ge 1$.  By  (\ref{eq:JT}),  we have 
$$ J^t(L)\in (S^{\otimes l})_{m+1}\hbar^m+ \hbar^{m+1}U_{\hbar}^{\hat \otimes l}, $$
and  thus we only need to prove that $\pi_{m}^t \circ J^t=W\circ \mu_{m+1}$.  
By Lemma \ref{sl3}, it suffices to show that this equality holds after post-composing with $\pi_m^h \circ \bar \Delta^{(p)}$, that is, it suffices to prove that
$$ \pi_m^h \circ \bar \Delta^{(p)} \circ \pi_{m}^t \circ J^t =  \pi_m^h \circ \bar \Delta^{(p)} \circ  W\circ \mu_{m+1}. $$
But according to the commutative diagrams of Lemmas \ref{scom} and \ref{spro1}, this is equivalent to 
proving that 
$$ (\pi_m^h \circ J^h) (D^{(p)}(L) )=(\pi_m^h \circ W \circ  \mu_{m+1})(D^{(p)}(L)),$$
which follows immediately from Theorem \ref{sth2h}. 
Thus we have the assertion.
\end{proof}

\begin{remark}\label{rem:tbw}
We have in particular shown that the universal $sl_2$ invariant for a string link $L\in SL_m(l)$ satisfies (\ref{eq:JT}), 
and the proof of Theorem \ref{sth2} given above relies on the fact that, when applying the projection map $\pi^{t}$ to the above equation, we obtain that 
$J^t(L)\in (S^{\otimes l})_{m+1}\hbar^m+ \hbar^{m+1}U_{\hbar}^{\hat \otimes l}$.
So we could consider an alternative version of the reduction $J^t$ of the universal $sl_2$ invariant for the statement of our main result, by setting 
$$ \tilde{J^t} := \tilde{\pi}^{t}\circ J, $$
where $\tilde{\pi}^{t}$ denotes the quotient map as $\mathbb{Q}$-modules
$$ \tilde{\pi}^{t}\co (S^{\otimes l})[[\hbar]]\to \frac{(S^{\otimes l})[[\hbar]]}{\prod_{ 1\leq i\leq j}(S^{\otimes l})_i\hbar^j}. $$
Clearly, it appears from the above proof, that Theorem \ref{sth2} still holds when replacing $J^t$ with this alternative version $\tilde{J^t}$. 
\end{remark} 

The above observation gives the following, which in particular applies to slice and boundary string links. 
\begin{corollary}\label{cor:final}
Let $L$ be an $l$-component string link with vanishing Milnor invariants. Then we have
 $$J(L)\in 1+ \prod_{ 1\leq i\leq j}(S^{\otimes l})_i\hbar^j. $$  
\end{corollary}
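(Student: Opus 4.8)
The plan is to read the statement off directly from equation~(\ref{eq:JT}), which was obtained in Section~\ref{7} on the way to proving Theorem~\ref{sth2}, combined with the observation recorded in Remark~\ref{rem:tbw}.

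First I would note that a string link $L$ with vanishing Milnor invariants lies in $SL_m(l)$ for \emph{every} $m\ge 1$: indeed, by Section~\ref{milnornb}, $SL_m(l)$ is precisely the submonoid of string links whose Milnor invariants of length $\le m$ vanish. Consequently (\ref{eq:JT}) may be invoked for all $m$ simultaneously, i.e.
\begin{align*}
J(L)\in 1+\bigoplus_{1\leq i\leq j\leq m}(S^{\otimes l})_i\hbar^j+(S^{\otimes l})_{m+1}\hbar^m+\hbar^{m+1}U_{\hbar}^{\hat\otimes l}\qquad\textrm{for every }m\ge 1.
\end{align*}

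The key point is that, among the three groups of summands on the right, the only one \emph{not} contained in $\prod_{1\le i\le j}(S^{\otimes l})_i\hbar^j$ is the single ``diagonal-violating'' term $(S^{\otimes l})_{m+1}\hbar^m$ (note $m+1>m$), and it sits in $\hbar$-degree exactly $m$. So I would fix $j\ge 1$, apply (\ref{eq:JT}) with $m=j+1$, and extract the $\hbar^j$-homogeneous part: the term $(S^{\otimes l})_{m+1}\hbar^m$ then lives in $\hbar$-degree $m=j+1>j$ and $\hbar^{m+1}U_{\hbar}^{\hat\otimes l}$ in $\hbar$-degrees $>j$, so the $\hbar^j$-part of $J(L)-1$ comes only from $\bigoplus_{1\le i\le j'\le m}(S^{\otimes l})_i\hbar^{j'}$ with $j'=j$, and hence has $S$-degree $\le j$; equivalently $\pi^t_{i,j}(J(L))=0$ whenever $i>j\ge 1$. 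Letting $j$ range over all positive integers yields $J(L)-1\in\prod_{1\le i\le j}(S^{\otimes l})_i\hbar^j$, which is the assertion.

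I do not anticipate any genuine obstacle: the full content of the corollary is already encapsulated in (\ref{eq:JT}), and the proof amounts to an ``$m\to\infty$'' limiting argument in which the lone bad term $(S^{\otimes l})_{m+1}\hbar^m$ is pushed out to ever higher $\hbar$-degree, something permitted precisely because vanishing of all Milnor invariants places $L$ in $SL_m(l)$ for every $m$. The only mild care needed is the bookkeeping with the two gradings — by $S$-degree and by $\hbar$-power — and the observation that the choice $m=j+1$ keeps the offending term away from $\hbar$-degree $j$; this is exactly the viewpoint behind the alternative reduction $\tilde{J^t}$ of Remark~\ref{rem:tbw}, for which Theorem~\ref{sth2} continues to hold.
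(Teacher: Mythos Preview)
Your proposal is correct and follows essentially the same approach as the paper: the paper gives no separate proof of Corollary~\ref{cor:final} beyond the sentence ``The above observation gives the following,'' referring to equation~(\ref{eq:JT}) and Remark~\ref{rem:tbw}, and your argument is precisely the ``$m\to\infty$'' reading of (\ref{eq:JT}) that this sentence presupposes. If anything, you have spelled out more carefully than the paper does the bookkeeping step of choosing $m=j+1$ to kill the term $(S^{\otimes l})_{m+1}\hbar^m$ at each fixed $\hbar$-degree $j$.
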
 

\section{Universal $sl_2$ invariant and clasper surgery} \label{8}

This section contains the proof of Lemma \ref{sl4} and Proposition \ref{s241}. 
In order to prove these results, we will make use of the theory of claspers, and more precisely we will study the behavior of the universal $sl_2$ invariant under clasper surgery. 

\subsection{A quick review of clasper theory} 
We recall here only the definition and a few properties of claspers for string links, and refer the reader to \cite{H} for more details.

Let $L$ be a string link.  
A \emph{clasper} for $L$ is an embedded surface in $D^2\times [0,1]$, which decomposes into disks and bands, called \emph{edges}, each of which connects two distinct disks.
The disks have either $1$ or $3$ incident edges, and are called {\em leaves} or {\em nodes}, respectively, and the clasper intersects $L$ transversely at a finite number of points, which are all contained in the interiors of the leaves. 
A clasper is called a \textit{tree clasper} if it is connected and simply connected. 
In this paper, we make use of the drawing convention of \cite[Fig. 7]{H} for representing claspers.  

The \emph{degree} of a tree clasper is defined to  the number of  nodes plus 1, i.e., the number of  leaves minus 1.  

Given a clasper $G$ for a string link $L$, we can modify $L$ using the local moves $1$ and $2$ of Figure \ref{fig:clasper} as follows. 
If $G$ contains one or several nodes, pick any leaf of $G$ that is connected to a node by an edge, 
and apply the local move $1$. Keep applying this first move at each node, until none remains: this produces a disjoint union of degree $1$ claspers for the string link $L$ (note indeed that erasing these degree $1$ claspers gives back the string link $L$). Now apply the local move $2$ at each degree $1$ clasper. We say that the resulting string link $L_G$ in $D^2\times [0,1]$ is obtained from $L$ by \emph{surgery along $G$}.  
Note that the isotopy class of $L_G$ does not depend on the order in which the moves were performed. 
\begin{figure}[h!]
\centering
  \includegraphics{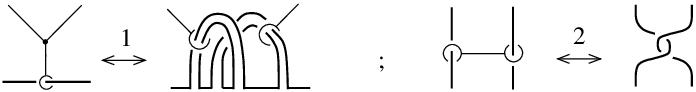}
  \caption{Constructing the image of a string link under clasper surgery.  Here, bold lines represent a bunch of parallel strands from the string link.  }\label{fig:clasper}
\end{figure}

The \emph{$C_k$-equivalence} is the equivalence relation on string links generated by surgeries along tree claspers of degree $k$ and isotopies.  

A clasper for a string link $L$ is called  \emph{simple} if each of its leaves intersects $L$ at one point.  
K. Habiro showed that two string links are $C_k$-equivalent if and only if they are related by surgery along a disjoint union of simple degree $k$ tree claspers. 

In the following, \emph{we will implicitly assume that all tree claspers are simple.}

A tree clasper $G$ for a string link $L$ is called \emph{repeated} if more than one leaf of $G$ intersects the same component of $L$. 
An important property of repeated tree claspers is the following, see for example \cite{FY}.  
\begin{lemma} \label{lem:repeated}
Surgery along a repeated tree clasper preserves the link-homotopy class of (string) links. 
\end{lemma}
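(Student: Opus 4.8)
The plan is to establish the stronger statement that, if $G$ is a simple tree clasper of degree $k$ for a (string) link $L$ with two of its leaves meeting the same component $L_i$, then $L_G$ is obtained from $L$ by a finite sequence of self-crossing changes on $L_i$ together with ambient isotopies; link-homotopy invariance is then immediate. I would argue by induction on the degree $k$.

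The base case $k=1$ is direct: a degree-$1$ tree clasper is a single band joining two leaves, and, $G$ being simple, these leaves meet $L_i$ transversely in one point each. Inspecting the surgery move (move~$2$ of Figure~\ref{fig:clasper}) shows that $L_G$ is obtained from $L$ by clasping two parallel sub-arcs of $L_i$, which is realized by self-crossing changes of $L_i$ (and, if one insists on a framing-preserving model, a self-crossing change together with self-crossing changes realizing the resulting framing change). Hence $L_G$ is link-homotopic to $L$.

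For the inductive step, let $k\geq 2$, so that $G$ has at least one node, and fix two leaves $f,f'$ of $G$ lying on $L_i$. Using the leaf-slide, edge-slide and node-slide moves of Habiro's clasper calculus \cite{H}, together with the standard procedure of trading a node for lower-degree claspers (move~$1$ of Figure~\ref{fig:clasper}), I would bring $f$ and $f'$ into the following standard position: they are the two terminal leaves of a ``$Y$''-shaped subtree $G_0\subseteq G$, whose third edge joins the rest of $G$, and there is a small sub-arc $a\subseteq L_i$ containing both $f\cap L_i$ and $f'\cap L_i$ and disjoint from everything else. Applying move~$1$ at the node of $G_0$ and simplifying, the two new leaves both encircle the same bunch of strands running along $a\subseteq L_i$, so the modification they induce on $L_i$ is realized by self-crossing changes of $L_i$, at the cost of replacing the remainder of $G$ by a union of tree claspers, each again carrying two leaves on $L_i$ and of degree no larger than $k$; the induction hypothesis then finishes the argument.

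The delicate point, and the place requiring the most care, is the reduction to standard position in the inductive step: when $f$ is slid along $L_i$ toward $f'$, it (and its incident edge) will in general be pushed past other leaves and edges of $G$ and past other components of $L$, and each such passage is compensated by a clasper move that may a priori raise the degree. One must therefore run the induction on a suitably chosen complexity — for instance a lexicographic pair recording the degree of $G$ together with a measure of how entangled $f$ and $f'$ are with the rest of $G$ — or, alternatively, appeal directly to Habiro's normal form describing $L_G$ as $L$ with a trivial $(k+1)$-string tangle (the strands through the leaves of $G$) replaced by a fixed iterated-commutator tangle, and observe that a single self-crossing change of $L_i$ between the two strings belonging to $L_i$ trivializes that tangle; checking this last observation against the explicit normal form is the step needing attention.
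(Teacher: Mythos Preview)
The paper does not prove this lemma; it cites Fleming--Yasuhara \cite{FY}. Your second suggestion is exactly their argument: a regular neighborhood of $G$ meets $L$ in $k{+}1$ trivial arcs, surgery replaces this trivial tangle by (a string-link form of) an iterated Bing double of the Hopf link along the tree $G$, and a single crossing change between \emph{any} two of its strands trivializes that tangle. This last fact --- the step you flag as ``needing attention'' --- is the actual content of the proof in \cite{FY}, and it follows from the commutator presentation of the tangle: inserting one factor $A_{p,q}^{\pm1}$ collapses the iterated bracket. When the two strands both lie on $L_i$ this crossing change is a self-crossing change, and the lemma follows with no induction.

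Your primary inductive route, by contrast, does not go through. First, ``standard position'' requires $f$ and $f'$ to share a node of $G$; in general they do not, and the IHX-type clasper moves needed to restructure the tree replace one degree-$k$ repeated clasper by \emph{two} of the same degree, so neither the degree nor any evident secondary complexity decreases. Second, even granting standard position, your description of the output of move~$1$ is not correct: unzipping at that node yields a repeated degree-$1$ clasper (leaves $f'$ and a new leaf on $L_i$) together with a degree-$(k{-}1)$ tree clasper carrying only \emph{one} leaf on $L_i$ (the other new leaf); the latter is not repeated, so the induction hypothesis does not apply to it, and surgery along it certainly need not preserve the link-homotopy class. I would drop this approach and complete the normal-form argument instead.
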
 

We conclude this subsection with a couple of  standard lemmas in clasper theory.
Proofs are omitted, since they involve the same techniques as in \cite[\S 4]{H}, where similar statements appear.

\begin{lemma} \label{lem:calculus}
Let $C$ be a union of tree claspers for a string link $L$, and let $t$ be a component of $C$ which is a tree clasper of degree $k$.  
Let $C'$ be obtained from $C$ by passing an edge of $t$ across $L$ or across another edge of $C$. 
Then  we have
\begin{align}\label{cal1}
 L_C \stackrel{C_{k+1}}{\sim} L_{C'}.
 \end{align}
 Moreover, if $t$ is repeated,  then the $C_{k+1}$-equivalence in (\ref{cal1}) is realized by surgery along repeated tree claspers.
\end{lemma}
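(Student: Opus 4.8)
The plan is to reduce the statement to an elementary local situation and then invoke Habiro's basic clasper moves, exactly as in \cite[\S4]{H}. First I would observe that it suffices to treat the case in which a single edge $e$ of the degree $k$ component $t$ is pushed across a single arc $\gamma$, where $\gamma$ is either a strand of $L$ or a single edge of some other component of $C$: a general isotopy of $t$ relative to $L\cup C$ is a finite composition of such elementary crossings, together with isotopies supported away from $L\cup C$ (which do not change $L_C$), so it is enough to control each one. Fix a small ball $B$ in which the crossing of $e$ with $\gamma$ takes place, with $L_C$ and $L_{C'}$ agreeing outside $B$.

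The key step is to identify the local difference. Using the basic moves of \cite{H} -- sliding a leaf along $\gamma$, together with the move that trades a crossing of an edge with an arc for a new leaf encircling that arc joined to a new node on the edge -- one sees that $L_{C'}$ is isotopic to $L_{C''}$, where $C''$ is obtained from $C$ by adjoining a single new tree clasper $t'$ built from a parallel copy of the relevant portion of $t$ together with one extra node and one extra leaf, the new leaf being a small loop around $\gamma$. With the degree convention of this paper (the degree is one less than the number of leaves), adjoining one node and one leaf to the degree $k$ clasper $t$ yields a tree clasper $t'$ of degree $k+1$. Since $C$ already contains $t$, we have $L_{C''}=(L_C)_{t'}$, so $L_{C'}$ is obtained from $L_C$ by surgery along the single degree $k+1$ tree clasper $t'$, which gives $L_C\stackrel{C_{k+1}}{\sim}L_{C'}$.

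For the ``moreover'' clause I would note that if $t$ is repeated then at least two of its leaves meet the same component of $L$; since $t'$ contains parallel copies of all the leaves of $t$ (plus one new leaf), $t'$ is again repeated. This persists when $\gamma$ is an edge of another clasper of $C$ rather than a strand of $L$, because moving an edge of $t$ across an edge of $C$ does not change which components of $L$ the leaves of $t$, hence of $t'$, intersect. Thus the $C_{k+1}$-equivalence just obtained is realised by surgery along a repeated tree clasper, and in particular, by Lemma \ref{lem:repeated}, it respects link-homotopy.

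The main obstacle is making the ``local difference'' step precise: one must produce $t'$ from the correct sequence of Habiro's elementary moves, and in the subcase where $\gamma$ is an edge of another clasper of $C$ -- so the new leaf of $t'$ encircles an edge rather than a strand of $L$ -- one first has to slide this leaf toward a leaf of the clasper it surrounds (or partially perform that clasper's surgery) in order to see that the resulting clasper still has degree $k+1$ and retains the same leaf--component incidence pattern. Beyond this, everything is bookkeeping of leaves and nodes.
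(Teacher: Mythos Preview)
The paper does not actually prove this lemma: immediately before stating it, the authors write ``Proofs are omitted, since they involve the same techniques as in \cite[\S 4]{H}, where similar statements appear.''  Your sketch is exactly the standard argument one extracts from Habiro's calculus (the zip construction producing the parallel clasper $t'$ with one extra node and leaf, then the leaf--incidence bookkeeping for the repeated case), so your approach coincides with what the paper is invoking by reference; your identification of the edge-across-edge subcase as the point requiring extra care is also accurate.
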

\begin{lemma} \label{lem:slide}
  Let $t_1\cup t_2$ be a disjoint union of a degree $k_1$ and a degree $k_2$ clasper for a string link $L$. 
  Let $t'_1\cup t'_2$ be obtained from $t_1\cup t_2$ by sliding a leaf of $t_1$ across a leaf of $t_2$, as shown below. 
  \begin{figure}[h!]
  \centering
  \includegraphics[scale=0.8]{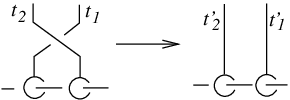}
  \end{figure}
  
  Then we have
 \begin{align}\label{cal2}
 L_{t_1\cup t_2} \stackrel{C_{k_1+k_2}}{\sim} 
 L_{t'_1\cup t'_2}. 
 \end{align}
 Moreover, if one of  $t_1$ and $t_2$ is repeated,  then the $C_{k_1+k_2}$-equivalence in (\ref{cal2}) is realized by surgery along repeated tree claspers.
\end{lemma}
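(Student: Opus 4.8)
The plan is to run the standard calculus of claspers of \cite[\S 4]{H}, exactly as is done for the companion Lemma \ref{lem:calculus}. Write $f_1$ for the leaf of $t_1$ that is slid and $f_2$ for the leaf of $t_2$ across which it is slid. The two configurations $t_1\cup t_2$ and $t'_1\cup t'_2$ agree outside a ball $B$, inside which the only change is that $f_1$ has been pushed from one side of the disk $f_2$ to the other. Such a push cannot be carried out in the complement of $t_1\cup t_2\cup L$, so along the way the boundary of $f_1$ acquires a single Hopf clasp with a parallel copy of the boundary of $f_2$; equivalently, $L_{t'_1\cup t'_2}$ is obtained from $L_{t_1\cup t_2}$ by introducing exactly one such clasp between $f_1$ and $f_2$.

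The crux is to undo this clasp at the cost of an auxiliary tree clasper. By the clasp-pass identity of \cite[\S 4]{H} (compare \cite{FY}), unclasping the pair $(f_1,f_2)$ yields $L_{t'_1\cup t'_2}$ up to surgery along the tree clasper $T$ obtained from $t_1\cup t_2$ by deleting the leaves $f_1$ and $f_2$, joining the two resulting half-edges to a common new trivalent vertex, and attaching to that vertex a third edge ending in a new leaf --- the latter being placed, as in the cited references, so as to meet $L$ along a component met by $f_1$ or by $f_2$. A vertex count gives that $T$ has $(k_1-1)+(k_2-1)+1=k_1+k_2-1$ trivalent vertices, hence $\deg T=k_1+k_2$; replacing $T$ by a disjoint union of simple tree claspers of the same degree, as permitted by Habiro's characterization of $C_k$-equivalence, we conclude
$$
L_{t_1\cup t_2}\ \stackrel{C_{k_1+k_2}}{\sim}\ L_{t'_1\cup t'_2}.
$$
This middle step is the only genuinely clasper-theoretic point, and it is entirely parallel to the reductions of \cite[\S 4]{H}, so I would phrase it as a citation rather than re-deriving Habiro's fundamental moves; the one subtlety to keep in mind is that the error clasper has degree \emph{exactly} $k_1+k_2$ and not lower.

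For the last assertion one tracks which component of $L$ is met by each leaf of $T$: these are the leaves of $t_1$ other than $f_1$, the leaves of $t_2$ other than $f_2$, and the one new leaf. If, say, $t_1$ is repeated, then either two of its leaves distinct from $f_1$ still meet a common component of $L$ --- and then $T$ is already repeated --- or $f_1$ belongs to a pair of leaves of $t_1$ meeting a common component $c$, in which case $T$ retains the partner of $f_1$ on $c$ while, by the placement of the new leaf noted above, that new leaf may also be taken on $c$, so $T$ is again repeated; the case where $t_2$ is repeated is symmetric. Since all the clasper moves realizing the identity above can be performed through surgeries along claspers retaining this property (cf.\ the ``moreover'' part of Lemma \ref{lem:calculus}), the $C_{k_1+k_2}$-equivalence is then realized by surgery along repeated tree claspers, as claimed. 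Thus the main obstacle is concentrated in the clasp-pass step; the degree count and the repeated-case bookkeeping are routine, the latter requiring only the short case analysis just indicated.
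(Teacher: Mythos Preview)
Your sketch is in line with the paper's approach: the paper does not actually prove this lemma at all, but simply says ``Proofs are omitted, since they involve the same techniques as in \cite[\S 4]{H}, where similar statements appear.'' What you have written is a reasonable expansion of that citation --- identify the slide as introducing a single clasp between $f_1$ and $f_2$, resolve the clasp via Habiro's moves to produce an error tree $T$, and count that $\deg T=k_1+k_2$. The degree count is correct, and invoking \cite[\S 4]{H} for the clasp-pass step is exactly what the authors intend.

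One point deserves tightening. In your ``moreover'' argument, the subcase where $t_1$ is repeated \emph{through} $f_1$ relies on the claim that the new leaf of $T$ ``may also be taken on $c$.'' This is not a free choice: the new leaf produced by Habiro's move sits on the strand where the slide occurred, and a priori that could be the component met by $f_2$ rather than by $f_1$. In the situation relevant here the slide is along a single arc of $L$, so $f_1$ and $f_2$ grip the same component and the issue does not arise; but if one reads the lemma in full generality, you should either note that the error tree always retains a parallel of $f_1$ among its leaves (so the partner of $f_1$ on $c$ together with that parallel already makes $T$ repeated), or else observe that the two possible placements of the new leaf differ by a further move of degree $\ge k_1+k_2$ realized by repeated claspers. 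Either remark closes the case analysis; as written, ``may be taken'' reads as an unjustified choice. Apart from this, your write-up is a faithful and somewhat more explicit version of what the paper leaves to the reader.
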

\subsection{Proofs of Lemma \ref{sl4} and Proposition \ref{s241}. }
In this section we prove  Lemma \ref{sl4} and Proposition \ref{s241}.
The proofs rely on two results (Corollary \ref{cor:slide} and Lemma \ref{lem:repeat}) which describe the behavior of the universal $sl_2$ invariant with respect to clasper surgery.

We need an additional technical notion to state these results. 
Recall from Section \ref{2} that  the trivial string link is defined as $\mathbf{1}(=\mathbf{1}_l)=\{p_1,\ldots, p_l\}\times [0,1]$.
We assume that the points $p_i$ are on the line $\{(x, y)\in D^2 \ |\ y=0\}$. 
A tree clasper $T$ for the trivial string link $\mathbf{1}$  is called \emph{overpassing}, if all edges and nodes of $T$ are contained in $\{(x,y)\in D^2 | \ y\leq 0\} \times [0,1]\subset D^2\times [0,1]$. 
In other words, $T$ is overpassing if there is a diagram of $\mathbf{1}\cup T$ which restrict to the standard diagram of $\mathbf{1}$, where the strands do not cross, and where the edges of $T$ overpass $\mathbf{1}$ at all crossings.  

\begin{lemma}\label{slide}
Let  $L$ and $ L_0$ be two link-homotopic $l$-component string links. 
Then for any $m\geq 1$, there exists $n\geq 0$ overpassing repeated tree claspers $R_1,\ldots, R_n$ of degree $\leq m$  for $\mathbf{1}$  such that
\begin{align*}
 L \stackrel{C_{m+1}}{\sim} L_0 \cdot \prod_{j=1}^n \mathbf{1}_{R_j}.
\end{align*}
\end{lemma}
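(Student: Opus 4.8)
The plan is to prove Lemma \ref{slide} by combining the link-homotopy classification from Lemma \ref{lem:braidlh}, a clasper-theoretic model for the difference between two link-homotopic string links, and the calculus lemmas (Lemmas \ref{lem:calculus} and \ref{lem:slide}) to push all edges and nodes of the relevant claspers below the string link. First I would recall that, since $L$ and $L_0$ are link-homotopic, the string link $L\cdot L_0^{-1}$ (where $L_0^{-1}$ denotes a link-homotopy inverse, which exists since one may work modulo link-homotopy) is link-homotopic to the trivial string link $\mathbf{1}$. By Habiro's realization of the $C_k$-filtration by clasper surgery (and more precisely the fact that a string link trivial up to link-homotopy is obtained from $\mathbf{1}$ by surgery along a union of tree claspers, where the repeated ones act trivially up to link-homotopy by Lemma \ref{lem:repeated}), one obtains that $L$ is $C_{m+1}$-equivalent to $L_0$ composed with a product of surgeries along tree claspers $G_1,\dots,G_N$ of degree $\le m$ for $\mathbf{1}$, and moreover, after discarding those that contribute nontrivially to Milnor link-homotopy invariants (there are none, since $L\cdot L_0^{-1}\sim_{lh}\mathbf 1$), we may assume all the $G_j$ are repeated. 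The point of the statement, however, is the stronger conclusion that the claspers can be taken \emph{overpassing}, which does not come for free from Habiro's theorem.

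The second and main step is therefore to upgrade an arbitrary configuration of repeated tree claspers $R_1,\dots,R_n$ for $\mathbf 1$ to an overpassing one, modulo $C_{m+1}$-equivalence. Here I would use Lemma \ref{lem:calculus}: any crossing between an edge of a clasper $R_j$ (of degree $k\le m$) and a strand of $\mathbf 1$, or between two edges, can be changed at the cost of a $C_{k+1}$-equivalence, and since $k+1\le m+1$ this is absorbed into the stated $C_{m+1}$-equivalence; crucially, the lemma guarantees that when $R_j$ is repeated the correcting claspers are again repeated, so the process does not introduce claspers with nonzero Milnor link-homotopy invariants. Similarly, Lemma \ref{lem:slide} lets one slide leaves of one clasper past leaves of another, again modulo $C_{m+1}$ and again preserving the ``repeated'' property. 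Performing these moves in a suitable order — first isotope each clasper so that all its nodes and edges lie in the lower half $\{y\le 0\}\times[0,1]$, using the fact that the leaves already meet $\mathbf 1$ (which sits on $\{y=0\}$), then correct all the resulting crossings so that the edges overpass $\mathbf 1$ — produces overpassing repeated tree claspers $R'_1,\dots,R'_{n'}$ of degree $\le m$ with $L\stackrel{C_{m+1}}{\sim}L_0\cdot\prod_j \mathbf 1_{R'_j}$, as desired. One should also note that surgery along a repeated clasper of degree $>m$ can simply be discarded modulo $C_{m+1}$ if its degree exceeds $m$, so keeping track of degrees is routine.

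The hard part will be organizing the moves of Step 2 so that correcting one crossing does not destroy progress made elsewhere, and so that the auxiliary claspers produced by Lemmas \ref{lem:calculus} and \ref{lem:slide} are themselves brought into overpassing position without an infinite regress — this is handled by induction on degree, since each correction raises the degree by at least one and everything of degree $>m$ is negligible modulo $C_{m+1}$. A secondary subtlety is making precise the passage from ``$L\sim_{lh}L_0$'' to ``$L\cdot L_0^{-1}\sim_{lh}\mathbf 1$ and hence $L\stackrel{C_{m+1}}{\sim}L_0\cdot(\text{repeated claspers on }\mathbf 1)$'': this uses that the monoid of string links up to link-homotopy is a group, together with Habiro's clasper description of the lower central series of this group and Lemma \ref{lem:repeated} to separate the repeated part. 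Once these bookkeeping points are settled, the statement follows.
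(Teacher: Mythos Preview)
Your Step 1 takes an unnecessary detour and does not close. You invoke a link-homotopy inverse $L_0^{-1}$ and argue about $L\cdot L_0^{-1}$, but this string link lives only in the link-homotopy quotient, and you never explain how to pass back from statements modulo link-homotopy to a $C_{m+1}$-equivalence between honest string links; the sentence ``one obtains that $L$ is $C_{m+1}$-equivalent to $L_0$ composed with \dots'' is exactly the gap. Moreover, the claim that the tree claspers realizing a link-homotopically trivial string link can be taken repeated because ``those that contribute nontrivially to Milnor link-homotopy invariants'' can be discarded is not justified: non-repeated claspers can cancel in pairs at the level of Milnor invariants without being individually removable. The paper bypasses all of this by observing that link-homotopy is \emph{by definition} generated by self-crossing changes, and a self-crossing change is precisely surgery along a degree $1$ repeated tree clasper; hence one has immediately $L=(L_0)_R$ with $R$ a disjoint union of degree $1$ repeated claspers \emph{for $L_0$}. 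No group structure, no Habiro filtration theorem, no separation argument is needed.

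Your Step 2 is the right idea, and matches the paper's inductive scheme: use Lemmas \ref{lem:calculus} and \ref{lem:slide} to rearrange, each move raising the degree of the correction claspers while keeping them repeated, and stop once the corrections have degree $>m$. One point you understate: because the paper's claspers start out on $L_0$ rather than on $\mathbf{1}$, the moves must do two things at once --- ``pull down'' each clasper into a collar $D^2\times[0,\epsilon]$ where $L_0$ is trivial (so it becomes a clasper for $\mathbf{1}$ stacked below $L_0$), \emph{and} make it overpassing there. Your formulation assumes the claspers already sit on $\mathbf{1}$, which relies on the flawed Step 1. Once you replace Step 1 by the direct observation above, Step 2 goes through essentially as you describe, by induction on degree.
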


\begin{proof}
By the definition of link-homotopy,  $L$ can be obtained from $L_0$  a finite sequence of self crossing changes, i.e., by surgery along a disjoint union $R$ of $n_1$ repeated degree $1$ tree claspers.

Pick a connected component $R_1$ of $R$. 
By a sequence of crossing changes and leaf slides,  we can ``pull down'' $R_1$ in $D^2\times [0,1]$ so that it leaves in a small neighborhood of $D^2\times \{0\}$, which is disjoint from $R\setminus R_1$ and intersects $L_0$ at $n$ trivial arcs. Apply further crossing changes to ensure that the image $\tilde R_1$ of $R_1$ under this deformation is overpassing. By Lemmas \ref{lem:calculus} and \ref{lem:slide},  we have 
$$ L  \stackrel{C_{2}}{\sim} ( L_0)_{R\setminus R_1}\cdot \mathbf{1}_{\tilde R_1} ,  $$
and the $C_{2}$-equivalence is realized by surgery along repeated tree claspers of degree $2$.
Applying this procedure to each of the $n_1$ connected components of $R$ successively, we eventually obtain that 
$$ L =  \left( L_0\cdot \prod_{1\leq i\leq n_1} \mathbf{1}_{\tilde R_i} \right)_ {R^{(2)}},  $$
where each $\tilde R_i$ is an overpassing tree clasper of degree $1$, 
and $R^{(2)}$ is a disjoint union of repeated  tree claspers of degree $2$. 

Next, we apply the same ``pull down'' procedure to each connected component of  $R^{(2)}$ successively. 
Using the same lemmas, we then have that 
 $$ L = \left(L_0\cdot  \prod_{1\leq i_2\leq n_2} \mathbf{1}_{\tilde R^{(2)}_{i_2}} \right)_{R^{(3)}},  $$
where each $R^{(2)}_i$ is an overpassing repeated  tree clasper of degree at most $2$, 
and $R^{(3)}$ is a disjoint union of  repeated tree claspers of  degree $\geq 3$.

Iterating this procedure, we obtain that, for any integer $m\ge 1$, we have  
 $$ L = \left( L_0\cdot \prod_{1\leq i_m\leq n_m} \mathbf{1}_{\tilde R^{(m)}_{i_m}} \right)_{R^{(m+1)}},  $$
where $\tilde R^{(m)}_i$ is an overpassing  repeated tree clasper of degree at most $m$, 
and $R^{(m+1)}$ is a disjoint union of repeated tree claspers of  degree $\geq m+1$.

This completes the proof.  
\end{proof}

Since the universal $sl_2$ invariant modulo the ideal $\hbar^{k}U_{\hbar}^{\hat \otimes l}$ is a finite type invariant of degree $<k$, 
hence is an invariant of $C_{k}$-equivalence \cite{H}, the multiplicativity of the universal $sl_2$ invariant implies the following.
\begin{corollary}\label{cor:slide}
Let  $L$ and $ L_0$ be two link-homotopic $l$-component string links. 
Then for any $m\geq 1$, there exists $n\geq 0$ overpassing repeated tree claspers $R_1,\ldots, R_n$ of degree $\leq m$  for $\mathbf{1}$  such that
\begin{align*}
 J(L)\equiv J(L_0) \cdot \prod_{j=1}^n J(\mathbf{1}_{R_j}) \oh{m+1}.
\end{align*}
\end{corollary}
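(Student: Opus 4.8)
The plan is to derive Corollary \ref{cor:slide} directly from Lemma \ref{slide}, using only two standard features of the universal $sl_2$ invariant. The first is that the reduction of $J$ modulo $\hbar^{m+1}U_{\hbar}^{\hat \otimes l}$ is a finite type invariant of degree less than $m+1$, hence is an invariant of $C_{m+1}$-equivalence \cite{H}; that is, $C_{m+1}$-equivalent string links have the same universal $sl_2$ invariant $\oh{m+1}$. The second is the multiplicativity of $J$ with respect to the stacking product, $J(L'\cdot L'')=J(L')J(L'')$ in $U_{\hbar}^{\hat \otimes l}$, which is immediate from the definition of $J$ given in Section \ref{univinv}.

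Granting these inputs, the argument is short. First I would apply Lemma \ref{slide} to the link-homotopic pair $(L,L_0)$, obtaining overpassing repeated tree claspers $R_1,\dots,R_n$ of degree $\le m$ for $\mathbf{1}$ with $L\stackrel{C_{m+1}}{\sim}L_0\cdot\prod_{j=1}^n\mathbf{1}_{R_j}$. Next I would invoke the $C_{m+1}$-invariance of $J$ modulo $\hbar^{m+1}$ to get
$$J(L)\equiv J\Big(L_0\cdot\prod_{j=1}^n\mathbf{1}_{R_j}\Big)\oh{m+1},$$
and finally expand the right-hand side by multiplicativity, which yields $J(L_0)\cdot\prod_{j=1}^n J(\mathbf{1}_{R_j})$ and hence the asserted congruence, with the very same claspers $R_1,\dots,R_n$ furnished by Lemma \ref{slide}.

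I expect essentially no obstacle beyond the two inputs already cited: Lemma \ref{slide} carries all the topological content, via the clasper calculus developed in Section \ref{8}, while the remaining steps are purely formal. The only point that deserves to be spelled out in the write-up is the implication ``finite type of degree less than $m+1$'' $\Rightarrow$ ``$C_{m+1}$-equivalence invariant'', which rests on Habiro's clasper theory and is a matter of citing \cite{H} precisely rather than of new argument.
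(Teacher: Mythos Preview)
Your proposal is correct and matches the paper's own argument essentially verbatim: the paper also deduces the corollary from Lemma \ref{slide} by invoking that $J$ modulo $\hbar^{m+1}$ is a finite type invariant of degree $<m+1$, hence a $C_{m+1}$-equivalence invariant \cite{H}, together with the multiplicativity of $J$. There is nothing to add.
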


We will apply this result to the case where $L_0$ is the explicit representative for the link-homotopy class of $L$ given in Lemma \ref{lem:braidlh}, whose universal $sl_{2}$ invariant was studied in details in Section \ref{6}. 
By Corollary \ref{cor:slide}, we are thus lead to studying the universal $sl_{2}$ invariant of string links obtained from $\mathbf{1}$ by surgery along an overpassing repeated tree clasper: this is the subject of Lemma \ref{lem:repeat} below. 

For $x=F^{s_1}H^{n_1}E^{r_1}\otimes F^{s_2}H^{n_2}E^{r_2}\otimes \cdots \otimes F^{s_l}H^{n_l}E^{r_l}\in S^{\otimes l}$,
set
 \begin{align*}
\mathrm{supp}(x)= \sharp \{ 1\leq i\leq l\ | \ s_i+n_i+r_i\not =0\}, 
 \end{align*}
that is, roughly speaking, the number of nontrivial tensorands. 
We denote by $(S^{\otimes l})_{\mathrm{supp}\le n}$ the $\mathbb{Q}$-submodule of $S^{\otimes l}$ spanned by all monomials $x$ such that $\mathrm{supp}(x)\leq n$.

\begin{lemma}\label{lem:repeat}
Let $C$ be an overpassing repeated tree clasper for $\mathbf{1}\in SL(l)$.
We have
\begin{align*}
J(\mathbf{1}_C)\in 1+\prod_{j\geq 1} (S^{\otimes l})_{\mathrm{supp}\le j}\hbar^j.
\end{align*}
\end{lemma}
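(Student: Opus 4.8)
The plan is to deduce the statement from two facts about the string link $\mathbf{1}_C$: an $\hbar$-adic vanishing coming from $C_k$-equivalence, and a support (or ``localization'') estimate coming from the overpassing and repeated hypotheses. Throughout, write $k\ge 1$ for the degree of $C$, so that $C$ has exactly $k+1$ leaves, and recall that $C$ is assumed simple, so each leaf meets $\mathbf{1}$ in a single point.

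First I would record the $\hbar$-adic statement. Surgery along the single degree $k$ tree clasper $C$ produces a string link $\mathbf{1}_C$ that is $C_k$-equivalent to $\mathbf{1}$; since the universal $sl_2$ invariant modulo $\hbar^{k}U_{\hbar}^{\hat \otimes l}$ is a finite type invariant of degree $<k$ and hence a $C_k$-equivalence invariant (as recalled just before Corollary \ref{cor:slide}), we get
\[ J(\mathbf{1}_C)\equiv J(\mathbf{1})=1 \oh{k}. \]

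Next I would prove that, with $S\subseteq\{1,\dots,l\}$ denoting the set of labels of the components of $\mathbf{1}$ met by the leaves of $C$, one has $\rho(J(\mathbf{1}_C))\in (S^{\otimes l})_{\mathrm{supp}\le |S|}[[\hbar]]$ and $|S|\le k$. The inequality $|S|\le k$ is immediate: $C$ has $k+1$ leaves, each meeting $\mathbf{1}$ at one point, and being repeated at least two of those points lie on the same component. For the containment I would use that $C$ is overpassing: since every edge and node of $C$ lies below $\mathbf{1}$ and overpasses it at every crossing, clasper surgery along $C$ modifies only the strands that its leaves actually grab, and the doubled strands it produces follow the edges of $C$, hence stay over the components indexed outside $S$. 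Thus those components can be isotoped entirely away from the surgery region and split off $\mathbf{1}_C$ as trivial straight strands. Choosing a decomposable diagram of $\mathbf{1}_C$ in which, for each $i\notin S$, the $i$th strand is a vertical crossingless strand carrying no label, the construction of $J$ in Section \ref{univinv} shows that the $i$th tensorand of every monomial of $J(\mathbf{1}_C)$ equals $1$; this is exactly the asserted containment. Combining the two: $\rho$ scales $\hbar$-powers by $\hbar$-powers, so $\rho(\hbar^kU_{\hbar}^{\hat\otimes l})=\hbar^kS^{\otimes l}[[\hbar]]$, and hence $\rho(J(\mathbf{1}_C))-1$ lies in $\hbar^kS^{\otimes l}[[\hbar]]\cap (S^{\otimes l})_{\mathrm{supp}\le k}[[\hbar]]=\prod_{j\ge k}(S^{\otimes l})_{\mathrm{supp}\le k}\hbar^j$, which is contained in $\prod_{j\ge 1}(S^{\otimes l})_{\mathrm{supp}\le j}\hbar^j$ since $k\le j$ whenever $j\ge k$. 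This gives the claim.

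The main obstacle is the geometric input in the localization step: justifying carefully that surgery along an \emph{overpassing} clasper affects only the components grabbed by its leaves, so that the ungrabbed components split off as trivial strands. This is where one must unwind the surgery moves of Figure \ref{fig:clasper} and track the doubled strands produced by the surgery, checking that they follow the edges of $C$ and therefore remain above the ungrabbed strands. This is the same kind of clasper manipulation as in \cite[\S 4]{H}, so I would invoke that reference rather than reproduce it in full; it is the one genuinely topological point, the remainder being formal bookkeeping with the $\hbar$-grading and the support filtration.
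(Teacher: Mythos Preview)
Your argument has a genuine gap in the localization step. You claim that the components of $\mathbf{1}$ not grabbed by leaves of $C$ ``split off $\mathbf{1}_C$ as trivial straight strands'' and that one can choose a decomposable diagram in which each such strand is ``a vertical crossingless strand carrying no label''. This is false in general, because the grabbed strands need not be adjacent. For a concrete obstruction, take $l=3$ and let $C$ be a degree~$1$ overpassing clasper with its two leaves on strands $1$ and $3$. After surgery, strands $1$ and $3$ are clasped together, while strand $2$, whose endpoints lie \emph{between} those of strands $1$ and $3$, is forced to cross at least one of them in any string-link diagram. Consequently the second tensorand of $J(\mathbf{1}_C)$ receives nontrivial labels; it is not equal to $1$ on the nose, only modulo higher powers of $\hbar$. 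So the containment $\rho(J(\mathbf{1}_C))\in (S^{\otimes l})_{\mathrm{supp}\le |S|}[[\hbar]]$ that you assert simply does not hold.

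What the overpassing hypothesis actually provides is a factorization $(\mathbf{1}_l)_C=B\cdot\big((\mathbf{1}_k)_{C'}\otimes\mathbf{1}_{l-k}\big)\cdot B^{-1}$ for some braid $B$ permuting the grabbed strands into adjacent positions, with $k$ the number of grabbed strands and $k\le m$ by repeatedness. Your $\hbar$-adic and support estimates then apply correctly to the middle factor, yielding $J\big((\mathbf{1}_k)_{C'}\otimes\mathbf{1}_{l-k}\big)\in 1+\prod_{j\ge k}(S^{\otimes l})_{\mathrm{supp}\le k}\hbar^j$. The missing, and essential, step is that conjugation by $J(B)$ preserves the filtration $\prod_{j\ge 1}(S^{\otimes l})_{\mathrm{supp}\le j}\hbar^j$: acting by a braid generator on a monomial of support $\le k$ at level $\hbar^j$ can enlarge the support to $k+1$, but only at the cost of an extra power of $\hbar$ (since $R^{\pm1}\equiv 1\oh{}$). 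This invariance under the braid action is precisely what replaces your incorrect ``crossingless'' claim and must be verified directly.
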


\begin{proof}
Since $C$ is an overpassing repeated tree clasper for $\mathbf{1}_l$, there exists an $l$-component  braid $B$ such that
\begin{align}\label{over}
(\mathbf{1}_l)_C=B \cdot  ((\mathbf{1}_k)_{C'} \otimes \mathbf{1}_{l-k})\cdot B^{-1}, 
\end{align} 
where $k$ denotes the number of strands of $\mathbf{1}_l$ intersecting $C$, and where $C'$ denotes the image of $C$ under this isotopy. 
(Recall that $\otimes$ denotes the horizontal juxtaposition of string links.)

Let $m$ denote the degree of $C$ (and $C'$). 
Since $J((\mathbf{1}_k)_{C'})\equiv 1 \oh{m}$ and $k\leq$ $\sharp \{$leaves of $C\}-1 = m$, we have
\begin{align*}
J((\mathbf{1}_k)_{C'}) &\in 1 + \prod_{j\geq m}S^{\otimes k}\hbar^j
\\
&\subset 1+ \prod_{j\geq k}S^{\otimes k}\hbar^j,
\end{align*}
which implies that 
\begin{align*}
J((\mathbf{1}_k)_{C'} \otimes \mathbf{1}_{l-k})& \in 1 + \prod_{j\geq k}(S^{\otimes l})_{\mathrm{supp}\le k}\hbar^j.
\end{align*}

So, by Equation (\ref{over}), in order to obtain the desired result it only remains to show that $\prod_{j\geq 1} (S^{\otimes l})_{\mathrm{supp}\le j}\hbar^j$
is invariant under the braid group action.
Here, the braid group acts on $U_{\hbar}^{\hat \otimes l}$ by quantized permutation: the action of Artin generator $\sigma_n$
on an element $x$ is given by $R^{(l)}_{n+1,n} (\bar \sigma_n (x)) (R^{-1})^{(l)}_{n+1,n}$, 
where $\bar \sigma_n(x)$ denotes the permutation of the $n$th and $(n+1)$th tensorands of $x$. 

Hence it suffices to prove that, for any monomial $x=(x_1\otimes \cdots \otimes x_k\otimes 1^{\otimes l-k})\hbar^j$ with  
$x_1,\ldots,x_k\in S$, $j\geq k$, and any $n \in \{1, . . . ,l-1\}$, we have
 \begin{align*}
 R^{(l)}_{n+1,n}.x.(R^{-1})^{(l)}_{n+1,n}\in \prod_{j\geq 1}(S^{\otimes l})_{\mathrm{supp}\le j}\hbar^j,
 \intertext{and}
 (R^{-1})^{(l)}_{n,n+1}.x.R^{(l)}_{n,n+1}\in \prod_{j\geq 1}(S^{\otimes l})_{\mathrm{supp}\le j}\hbar^j.
 \end{align*}
 We prove the first inclusion. The second one is similar.
This is clear when $1\leq n\leq k-1$ and  $k+1 \leq n$. When $n=k$, since $R^{\pm1}\equiv 1 \oh{}$, we have 
\begin{align*}
 R^{(l)}_{k+1,k}.x.(R^{-1})^{(l)}_{k+1,k}&= \Big(x_1\otimes \cdots \otimes R_{21}(x_k\otimes 1)(R^{-1})_{21}\otimes 1^{\otimes l-k-1} \Big)\hbar^j
\\
&\in ( x_1\otimes \cdots \otimes x_k \otimes 1^{\otimes l-k})\hbar^j  + \prod_{i\geq j+1}(S^{\otimes l})_{\mathrm{supp}\le k+1}\hbar^i
\\
&\subset ( x_1\otimes \cdots \otimes x_k \otimes 1^{\otimes l-k})\hbar^j  + \prod_{i\geq j+1}(S^{\otimes l})_{\mathrm{supp}\le j+1}\hbar^i.
\end{align*}
This completes the proof.
\end{proof}

\begin{corollary}\label{coslide}
If $L\in SL^h_m(l)$,  then  we have
\begin{align}\label{eq:JT2}
J(L) & \in  1 +  (W\circ \mu^h_m)(L) +  \bigoplus_{j=1}^m  (S^{\otimes l})_{\mathrm{supp}\le j}\hbar^j  + \hbar^{m+1}U_{\hbar}^{\hat \otimes l}.
\end{align}
\end{corollary}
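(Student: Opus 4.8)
The plan is to combine the explicit link-homotopy representative from Lemma \ref{lem:braidlh}, the computation of its universal $sl_2$ invariant from Section \ref{6} (Theorem \ref{sth1}/Proposition \ref{sp}), and the ``clasper slide'' description of link-homotopy from Corollary \ref{cor:slide} together with the support estimate of Lemma \ref{lem:repeat}. First I would apply Corollary \ref{cor:slide} with $L_0 = b^L_m\cdot b^L_{m+1}\cdots b^L_{l-1}$, the representative of the link-homotopy class of $L$ provided by Lemma \ref{lem:braidlh} (valid since $L\in SL^h_m(l)$): this gives overpassing repeated tree claspers $R_1,\ldots,R_n$ of degree $\le m$ such that
\begin{align*}
J(L) \equiv J(L_0)\cdot \prod_{j=1}^n J(\mathbf{1}_{R_j}) \oh{m+1}.
\end{align*}

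Next I would analyze each factor modulo $\hbar^{m+1}$. For the clasper factors, Lemma \ref{lem:repeat} gives $J(\mathbf{1}_{R_j})\in 1+\prod_{j\ge 1}(S^{\otimes l})_{\mathrm{supp}\le j}\hbar^j$, and since the submodule $\prod_{j\ge 1}(S^{\otimes l})_{\mathrm{supp}\le j}\hbar^j$ plus scalars is closed under multiplication (the support of a product is at most the sum of supports, so a term $\hbar^{j_1+j_2}$ lands in $(S^{\otimes l})_{\mathrm{supp}\le j_1+j_2}$), the product $\prod_j J(\mathbf{1}_{R_j})$ lies in $1+\prod_{j\ge 1}(S^{\otimes l})_{\mathrm{supp}\le j}\hbar^j$ as well; truncating modulo $\hbar^{m+1}$ puts it in $1+\bigoplus_{j=1}^m(S^{\otimes l})_{\mathrm{supp}\le j}\hbar^j+\hbar^{m+1}U_{\hbar}^{\hat\otimes l}$. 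For the factor $J(L_0)$, I would invoke Theorem \ref{sth1} (equivalently Theorem \ref{sth2h}): since $L_0$ is link-homotopic to $L$, hence lies in $SL^h_m(l)$, we get $J^h(L_0)\equiv (W\circ\mu^h_m)(L_0)\hbar^{\cdots}$, but more is true---by the very structure of $L_0$ as a product of the $B_J^{(l)}$ and Proposition \ref{sp}, one sees that $J(L_0)\equiv 1+(W\circ\mu^h_m)(L)\cdot(\text{in degree }\hbar^m) + (\text{lower-order terms each of bounded support})\oh{m+1}$; concretely, the universal invariant of each $B_J^{(l)}$ lies in $1+\varsigma^{(l)}_J\hbar^m+\hbar^{m+1}U_\hbar^{\hat\otimes l}$ with $\varsigma_J^{(l)}\in\langle sl_2\rangle^{(l)}_{m+1}$, and all correction terms arising from multiplying these commutators together have support $\le j$ in degree $\hbar^j$ for $j\le m$, because the relevant coproduct/$R$-matrix contributions only raise support in a controlled way (the same mechanism as in Lemma \ref{lem:repeat}). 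Finally, multiplying $J(L_0)$ by the clasper product, and using that $W\circ\mu^h_m(L)$ lives in $\langle sl_2\rangle^{(l)}_{m+1}\hbar^m$ by Lemma \ref{wcc}, gives precisely (\ref{eq:JT2}).

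The main obstacle I anticipate is making precise the claim that all the ``correction terms'' in $J(L_0)$---those of $\hbar$-degree $j<m$, and those of $\hbar$-degree $m$ other than the leading Milnor term---lie in $(S^{\otimes l})_{\mathrm{supp}\le j}$. One must track how support behaves under the stacking product of the pure braids $B_J^{(l)}$ and of their (infinitely many) $R$-matrix/$D$-matrix contributions; this is essentially the same bookkeeping as in the proof of Lemma \ref{lem:repeat}, so I would structure the argument to reduce to that lemma by observing that $L_0$ itself can be built up, modulo $C_{m+1}$-equivalence, from overpassing repeated tree claspers of degree $<m$ together with the degree-$m$ tree claspers realizing $\mu^h_m(L)$ (via the clasper analogue of Lemma \ref{lem:braidlh} alluded to in the Remark after that lemma). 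With that reduction, the support estimate for $J(L_0)$ follows from Lemma \ref{lem:repeat} applied to the lower-degree pieces, plus Proposition \ref{sp} for the top piece, and the corollary drops out.
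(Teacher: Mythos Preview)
Your approach is essentially the paper's, but you have manufactured an obstacle that is not there. The key point you are missing is that the representative $L_0=b^L_m\cdot b^L_{m+1}\cdots b^L_{l-1}$ satisfies
\[
J(L_0)\equiv 1+(W\circ\mu^h_m)(L)\oh{m+1}
\]
\emph{exactly}, with no correction terms in lower $\hbar$-degree whatsoever. Indeed, each generator $B^{(l)}_J$ with $J\in\mathcal{I}_{k+1}$ lies in $\Gamma_kP(l)$, so by Proposition~\ref{sp} one has $J(B^{(l)}_J)\equiv 1\oh{k}$; in particular every factor appearing in $L_0$ is congruent to $1$ modulo $\hbar^m$. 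Multiplying these factors therefore gives $J(L_0)\equiv 1\oh{m}$, and the $\hbar^m$-coefficient is simply the sum $\sum_{J\in\mathcal{I}_{m+1}}\mu_J(L)\varsigma^{(l)}_J=(W\circ\mu^h_m)(L)$ coming from $b^L_m$ alone (the higher $b^L_i$ contribute nothing modulo $\hbar^{m+1}$). This is precisely equation~(\ref{eq:Jbraid}) in the paper.

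Once you have this, the rest of your argument is correct and identical to the paper's: multiply $1+(W\circ\mu^h_m)(L)$ by the clasper product, which lies in $1+\prod_{j\ge 1}(S^{\otimes l})_{\mathrm{supp}\le j}\hbar^j$ by Lemma~\ref{lem:repeat} and closure under multiplication, and observe that the cross-term $(W\circ\mu^h_m)(L)\cdot(\textrm{anything in }\hbar^{\ge 1})$ lands in $\hbar^{m+1}U_\hbar^{\hat\otimes l}$ since $(W\circ\mu^h_m)(L)\in\langle sl_2\rangle^{(l)}_{m+1}\hbar^m$. So your proposed detour through a clasper reinterpretation of $L_0$, and your invocation of Theorem~\ref{sth1}/\ref{sth2h} (which would only give information after applying $\pi^h$, not about $J(L_0)$ itself), are both unnecessary.
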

\begin{proof}
Recall from Lemma \ref{lem:braidlh} that $L\in SL_m^h(l)$ is link-homotopic to $b=b_m^Lb_{m+1}^L\cdots b_{l-1}^L$.
By  Proposition \ref{sp} we have
\begin{align}\label{eq:Jbraid}
\begin{split}
 J(b)  &\equiv 1 + \left( \sum_{J\in \mathcal{I}_{m+1}} \mu_J(L).\varsigma^{(l)}_J\right) \hbar^m 
 \\
&\equiv 1 +  (W\circ \mu^h_m) (L) \oh{m+1},
\end{split}
\end{align}
where the second equality follows from Equation (\ref{equivalence}). 
Since $\prod_{j}  (S^{\otimes l})_{\mathrm{supp}\le j} \hbar^j$ is closed under multiplication, Corollary \ref{cor:slide} and Lemma  \ref{lem:repeat} imply that 
\begin{align*}
J(L) &\in  J(b) \cdot \left( 1+\prod_{j}  (S^{\otimes l})_{\mathrm{supp}\le j} \hbar^j \right)  + \hbar^{m+1}U_{\hbar}^{\hat \otimes l}
\\
&\subset \left(1 +  (W\circ \mu^h_m) (L)\right)\cdot \left( 1+\prod_{j}  (S^{\otimes l})_{\mathrm{supp}\le j} \hbar^j \right)  + \hbar^{m+1}U_{\hbar}^{\hat \otimes l}
\\
&\subset 1 +  (W\circ \mu^h_m)(L)  +  \bigoplus_{j=1}^m  (S^{\otimes l})_{\mathrm{supp}\le j}\hbar^j  + \hbar^{m+1}U_{\hbar}^{\hat \otimes l}.
\end{align*}
This completes the proof.
\end{proof}

We can now prove Lemma \ref{sl4} and Proposition \ref{s241}. 

\begin{proof}[Proof of Lemma \ref{sl4}]
Let $L\in SL^h_m(l)$ and $1\leq j\leq i-2\leq m$.  
By Corollary \ref{coslide}, we have
\begin{align*}
\pi^h_{i,j }(J(L))&\in \pi^h_{i,j }\left(1 +  (W\circ \mu^h_m)(L) +  \bigoplus_{k=1}^m  (S^{\otimes l})_{\mathrm{supp}\le k}\hbar^k  + \hbar^{m+1}U_{\hbar}^{\hat \otimes l}\right)
\\
&=\pi^h_{i,j }\left((W\circ \mu^h_m)(L)\right) +  \pi^h_{i,j }\left(\bigoplus_{k=1}^m  (S^{\otimes l})_{\mathrm{supp}\le k}\hbar^k\right).
\end{align*}
But the right  hand side is equal to $0$ since we have
\begin{align*}
(W\circ \mu^h_m)(L)&\in  \langle sl_2 \rangle ^{(l)}_{ m+1}\hbar^m,
\intertext{and}
\bigoplus_{k=1}^m  (S^{\otimes l})_{\mathrm{supp}\le k}\hbar^k &\cap \langle sl_2 \rangle ^{(l)}_{i}\hbar ^j=\emptyset,
\end{align*}
since for any monomial $x\in \langle sl_2 \rangle ^{(l)}_{ i}$ we have $\mathrm{supp}(x)=i\geq j+2$.

This completes the proof.
\end{proof}
\begin{proof}[Proof of Proposition \ref{s241}]
Let $L, L' \in SL^h_m(l)$  be two link-homotopic  string links. 
By Corollary \ref{cor:slide} and Lemma  \ref{lem:repeat}, 
together with the fact that $\prod_{j}  (S^{\otimes l})_{\mathrm{supp}\le j} \hbar^j$ is closed under multiplication, we have 
\begin{align*}
J(L) &\in J({ L'}) \cdot \left( 1+\prod_{j}  (S^{\otimes l})_{\mathrm{supp}\le j} \hbar^j \right)  +\hbar^{m+1}U_{\hbar}^{\hat \otimes l}.  
\end{align*}
Then  Corollary  \ref{coslide} implies  that $J^h(L) \equiv  J^h(L') \oh{m+1}$, as desired. 
\end{proof}

In a similar spirit as Remark \ref{rem:tbw}, we have the following.
\begin{remark}\label{rem:tbw2}
By Corollary \ref{coslide}, the universal $sl_2$ invariant for a string link $L$ in $SL^h_m(l)$ satisfies (\ref{eq:JT2}). 
So we have a variant of Theorem \ref{sth2h}, using an alternative version of the reduction $J^h$ of the universal $sl_2$ invariant $J$, by setting 
$$ \tilde{J^h} := \tilde{\pi}^{h}\circ J, $$
where $\tilde{\pi}^{h}$ denotes the quotient map as $\mathbb{Q}$-modules
$$ \tilde{\pi}^{h}\co (S^{\otimes l})[[\hbar]]\to \frac{(S^{\otimes l})[[\hbar]]}{\prod_j(S^{\otimes l})_{\mathrm{supp}\le j}\hbar^j }. $$
Indeed, it follows immediately from Corollary \ref{coslide} that if $L\in SL^h_m(l)$, then 
$$\tilde{J^h}(L)\equiv (W\circ \mu^h_m)(L)  \oh{m+1}.$$ 
\end{remark} 
In particular, we obtain the following. 
\begin{corollary}\label{cor:final2}
Let $L$ be a link-homotopically trivial $l$-component string link. Then we have
$$J(L)\in 1+ \bigoplus_{j=1}^{l-1} (S^{\otimes l})_{supp\le j}\hbar^j+ \hbar^lU_{\hbar}^{\hat \otimes l}. $$ 
\end{corollary}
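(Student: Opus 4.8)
The plan is to read off this corollary as the special case $m=l-1$ of Corollary \ref{coslide}, once one knows that the leading term $(W\circ\mu^h_{l-1})(L)$ appearing there vanishes for a link-homotopically trivial $L$. (For $l=1$ the statement reduces to the trivial fact $J(L)\equiv 1\pmod{\hbar}$, so one assumes $l\ge 2$.)

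First I would record the following elementary observation: a link-homotopically trivial $l$-component string link $L$, that is, one link-homotopic to $\mathbf{1}_l$, lies in $SL^h_{l-1}(l)$ and moreover satisfies $(W\circ\mu^h_{l-1})(L)=0$. Indeed, by the Habegger--Lin classification recalled in Section \ref{sec:pure}, the collection $\{\mu_I\ |\ I\in\mathcal{I}_m,\ 2\le m\le l\}$ is a complete set of link-homotopy invariants of string links, so if $L$ is link-homotopic to $\mathbf{1}_l$ then $\mu_I(L)=0$ for every non-repeated sequence $I$. The vanishing of those of length $\le l-1$ says precisely that $L\in SL^h_{l-1}(l)$, while from the identity (\ref{equivalence}) with $m=l-1$ we get
$$
(W\circ\mu^h_{l-1})(L)=\Big(\sum_{I\in\mathcal{I}_l}\mu_I(L)\,\varsigma^{(l)}_I\Big)\hbar^{\,l-1}=0,
$$
since the $\mu_I(L)$ with $I\in\mathcal{I}_l$ vanish as well.

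With these facts in hand I would simply invoke Corollary \ref{coslide} with $m=l-1$, which gives
$$
J(L)\in 1+(W\circ\mu^h_{l-1})(L)+\bigoplus_{j=1}^{l-1}(S^{\otimes l})_{\mathrm{supp}\le j}\hbar^j+\hbar^{l}U_{\hbar}^{\hat\otimes l};
$$
discarding the middle term by the previous step yields exactly the asserted containment.

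There is no real obstacle remaining at this stage, since all of the substance is already packaged in Corollary \ref{coslide} (hence in Lemma \ref{lem:braidlh}, Proposition \ref{sp}, Corollary \ref{cor:slide} and Lemma \ref{lem:repeat}). The only point needing a moment's attention is to feed in the sharp value $m=l-1$: the choice $m=l$ would be formally legitimate but vacuous, since $(S^{\otimes l})_{\mathrm{supp}\le l}=S^{\otimes l}$. An equivalent and equally short route would be to quote Remark \ref{rem:tbw2}, which directly gives $\tilde{J^h}(L)\equiv(W\circ\mu^h_{l-1})(L)\equiv 0\pmod{\hbar^{l}}$, i.e. the same conclusion phrased through the quotient map $\tilde\pi^h$.
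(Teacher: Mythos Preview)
Your proof is correct and is essentially the same as the paper's: the corollary is stated immediately after Remark \ref{rem:tbw2} (equivalently, Corollary \ref{coslide}) with no separate proof, and you have simply spelled out the obvious specialization to $m=l-1$, observing that link-homotopy triviality forces $(W\circ\mu^h_{l-1})(L)=0$. The only cosmetic difference is that you route the vanishing of the leading term through (\ref{equivalence}), whereas one could equally just note that $\mu^h_{l-1}$ is a link-homotopy invariant and vanishes on $\mathbf{1}_l$.
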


\section*{Acknowledgments}
The first author is supported by the French ANR research projects ``VasKho'' ANR-11-JS01-00201 and ``ModGroup'' ANR-11-BS01-02001. 
The second author is supported by JSPS KAKENHI Grant Number 15K17539. 
The authors wish to thank Kazuo Habiro and Marco Castronovo for helpful comments and conversations.
They also thank the referee for many useful remarks and comments.


\begin{thebibliography}{0}


\bibitem{BN} D. Bar-Natan, \emph{On the Vassiliev knot invariants}, Topology \textbf{34} (1995), 423--472.

\bibitem{BN2} D. Bar-Natan, {\it Vassiliev homotopy string link invariants}, J. Knot Theory Ram. {\bf 4}, no. 1 (1995), 13--32.

\bibitem{wheels} D. Bar-Natan, S. Garoufalidis, L. Rozansky, D.P. Thurston, \emph{Wheels, wheeling, and the Kontsevich integral of the unknot}, Israel J. Math. \textbf{119} (2000), 217--237.

\bibitem{casson} A.J. Casson, \emph{Link cobordism and Milnor's invariant}, Bull. London Math. Soc. \textbf{7} (1975), 39--40.

\bibitem{CD} S.V. Chmutov and S.V. Duzhin, \emph{The Kontsevich Integral}, Acta Appl. Math. \textbf{66} (2000), 155--190.

\bibitem{CDM} S. Chmutov, S. Duzhin and J. Mostovoy, \emph{Introduction to Vassiliev knot invariants}, Cambridge University Press, Cambridge, 2012. 

\bibitem{FY} T. Fleming and A. Yasuhara, {\it Milnor's invariants and self $C_k$-equivalence}, Proc. Amer. Math. Soc. {\bf 137} (2009) 761-770. 

\bibitem{HL1} N. Habegger, X.S. Lin, \emph{The classification of links up to link-homotopy}, 
            J. Amer. Math. Soc. \textbf{3} (1990), no. 2, 389-419.
%
\bibitem{HL2} N. Habegger, X.S. Lin, \emph{On Link Concordance and Milnor's $\overline \mu$ Invariants}, Bull. London Math. Soc. \textbf{30} (1998), 419-428.
%
\bibitem{HMa} N. Habegger, G. Masbaum, {\it The Kontsevich integral and Milnor's invariants}, Topology {\bf 39} (2000), no. 6, 1253--1289.
%
\bibitem{H} K. Habiro, {\it Claspers and finite type invariants of links}, Geom. Topol. {\bf 4} (2000), 1--83.

\bibitem{H1} {K. Habiro}, {\it Bottom tangles and universal invariants}, {Alg. Geom. Topol. \textbf{6} (2006), 1113--1214. }

\bibitem{H2} {K. Habiro}, {\it A unified Witten-Reshetikhin-Turaev invariants for integral homology spheres}. {Invent. Math. \textbf{171} (2008), no. 1, 1--81. }

\bibitem{Kassel} C. Kassel, {\it Quantum groups}, 
Graduate Texts in Mathematics \textbf{155}, Springer-Verlag, New York, 1995. 

\bibitem{Ko} M. Kontsevich, \emph{Vassiliev's knot invariants},  ``I. M. Gel'fand Seminar'', 137--150, Adv. Soviet Math., 16, Part 2,
  Amer. Math. Soc., Providence, RI, 1993.
  
\bibitem{Law1} R. J. Lawrence, {\it A universal link invariant}, in ``The Interface of Mathematics and Particle Physics'' (Oxford, 1988), 
Inst. Math. Appl. Conf. Ser. New Ser., vol. \textbf{24}, Oxford University Press, New York, 1990, 151--156. 
%
\bibitem{Law2} R. J. Lawrence, {\it A universal link invariant using quantum groups}, in ``Differential Geometric Methods in Theoretical Physics'' 
(Chester, 1989), World Sci. Pub. (1989), 55--63.
%
\bibitem{LM} T.T.Q. Le and J. Murakami, {\it The universal Vassiliev-Kontsevich invariant for framed oriented links}, Compos. Math. {\bf 102} (1996), 41--64. 
%
\bibitem{Lin} X.S. Lin, {\it Power series expansions and invariants of
  links}, in ``Geometric topology'', AMS/IP Stud. Adv. Math. 2.1,
  Amer. Math. Soc. Providence, RI (1997) 184--202.
  
\bibitem{MS} J.B. Meilhan, S. Suzuki, \emph{Riordan trees and the homotopy $sl_2$ weight system}, J. Pure Appl. Algebra (2016), http://dx.doi.org/10.1016/j.jpaa.2016.07.012.

\bibitem{MYpjm} J.B. Meilhan, A. Yasuhara, \emph{On Cn-moves for links}, Pacific J. Math. \textbf{238} (2008), 119--143. 

\bibitem{MY} J.B. Meilhan, A. Yasuhara, {\it Milnor invariants and the HOMFLYPT polynomial}, Geom. Topol {\bf 16} (2012), 889--917
%
\bibitem{Milnor} J. Milnor, {\it Link groups}, Ann. of Math. (2) {\bf 59} (1954), 177--195.
%
\bibitem{Milnor2} J. Milnor, {\it Isotopy of links}, Algebraic geometry and topology, 
      A symposium in honor of S. Lefschetz, pp. 280--306, 
      Princeton University Press, Princeton, N. J., 1957. 

\bibitem{Polyak} M. Polyak, \emph{Skein relations for Milnor’s $\mu$-invariants}, Algebr. Geom. Topol.  \textbf{5} (2005), 1471--1479. 

\bibitem{O} T. Ohtsuki, {\it Colored ribbon Hopf algebras and universal invariants of framed links} J. Knot Theory Ram. \textbf{2} (1993), no. 2, 211--232.

\bibitem{ohtsuki} T. Ohtsuki, \emph{Quantum Invariants, A Study of Knots, 3-Manifolds, and their Sets}, World Scientific Publishing Company.
%
\bibitem{RT} N. Y. Reshetikhin and V. G. Turaev, \emph{Ribbon graphs and their invariants derived from quantum groups},  Comm. Math. Phys. \textbf{127} (1990), no. 1, 1--26.
%
\bibitem{rozansky} L. Rozansky, \emph{Reshetikhin's formula for the Jones polynomial of a link: Feynman diagrams and Milnor's linking numbers}, 
 in ``Topology and physics'', J. Math. Phys. \textbf{35} (1994), no. 10, 5219--5246. 
%
\bibitem{stallings} J. Stallings, \emph{Homology and central series of groups, J. Algebra}, {\bf 2}(1965), 170--181. 

\bibitem{sakie1} {S. Suzuki}, {\it On the universal $sl_2$ invariant of ribbon bottom tangles.} {Algebr. Geom. Topol.  \textbf{10} (2010), no. 2, 1027--1061. }

\bibitem{sakie2} {S. Suzuki}, {\it On the universal $sl_2$ invariant of boundary bottom tangles}, Algebr. Geom. Topol.  \textbf{12} (2012), 997--1057.

\bibitem{sakie3} {S. Suzuki}, {\it On the universal $sl_2$ invariant of Brunnian bottom tangles}, Math. Proc. Camb. Phil. Soc.  \textbf{154} (2013), no.1, 127--143.

\bibitem{yasuhara} A. Yasuhara, \emph{Self Delta-equivalence for Links Whose Milnor's Isotopy Invariants Vanish}, Trans. Amer. Math. Soc. 
\textbf{361} (2009), 4721--4749.  

\end{thebibliography}
\end{document}